 \newtheorem{thm}{Theorem}[section]
 \newtheorem{cor}[thm]{Corollary}
 \newtheorem{lem}[thm]{Lemma}
 \newtheorem{prop}[thm]{Proposition}
 \theoremstyle{definition}
 \theoremstyle{remark}
 \newtheorem{rem}[thm]{Remark}
 \numberwithin{equation}{section}
\newcommand{\sgn}{\operatorname{sgn}}
\newcommand{\ind}{\operatorname{ind}}
\begin{document}

\title{Fredholm operators in the Toeplitz Algebra $\mathcal{I}(QC)$}

\author{Adam Orenstein}

\address{
244 Mathematics Building\\
University at Buffalo\\
Buffalo, NY 14260}

\email{adamoren@buffalo.edu}

\subjclass{Primary 47B35; Secondary 47A53}

\keywords{Toeplitz Algebra; Quasicontinuous functions; Fredholm Operators}

\date{\today}


\begin{abstract}
We will give a complete description of $\mathcal{I}$, the set of invertible quasicontinuous functions on the unit circle.  After doing this, we will then classify the path-connected components of $\mathcal{I}$ and show that $\mathcal{I}$ has uncountably many path-connected components.  We will then use the above classifications to characterize $\mathcal{F}$, the set of Fredholm operators of the C$^*$-algebra generated by the Toeplitz operators $T_\phi$ with quasicontinuous symbols $\phi$.  Then we will classify the path-connected components of $\mathcal{F}$ and show that $\mathcal{F}$ also has uncountably many path-connected components.
\end{abstract}

\maketitle

\section{Introduction}
Let $\mathbb{T}$ be the unit circle with the Lebesgue measure.  Let $C(\mathbb{T})$ be the set of all continuous functions on $\mathbb{T}$.  For any $n\in\mathbb{Z}$, let $\chi_n:\mathbb{T}\rightarrow\mathbb{T}$ be defined by \[\chi_n(t)=e^{int}.\]  For $p=1,2,\infty,$ let $H^p(\mathbb{T})$ be defined by \[H^p(\mathbb{T})=\left\{f\in L^p(\mathbb{T}): \int_0^{2\pi}f(t)\chi_n(t)dt=0 \text{ for all } n>0\right\}.\]  The spaces $H^p(\mathbb{T})$ are called the Hardy spaces on $\mathbb{T}$.  It is shown in \cite[page 147]{Doug} as proposition 6.36 that $H^\infty+C(\mathbb{T})$ is a Banach subalgebra of $L^\infty\left(\mathbb{T}\right)$.  Let $QC$ be the set defined by \[QC=\left[H^\infty+C(\mathbb{T})\right]\bigcap \overline{\left[H^\infty+C(\mathbb{T})\right]}\]  where $\overline{H^\infty+C(\mathbb{T})}=\{\overline{f}:f\in H^\infty+C(\mathbb{T})\}$ \cite[page 157]{Doug}.  A function in $QC$ is called quasicontinuous.  It is easy to show that $QC$ is a commutative Banach subalgebra of $L^\infty(\mathbb{T})$.

Let $\mathfrak{L}(H^2)$ be the set of all bounded linear operators on $H^2$.  Denote the operator norm on $\mathfrak{L}(H^2)$ by $\|\cdot\|$.  For any $f\in L^\infty\left(\mathbb{T}\right)$, let $T_f$ be the operator on $H^2$ defined by \[T_f(g)=P(fg) \text{ for all }g\in H^2\] where $P$ is the orthogonal projection of $L^2(\mathbb{T})$ onto $H^2$.  $T_f$ is called the Toeplitz operator on $H^2$ with symbol $f$.  Since $f\in L^\infty(\mathbb{T})$, $T_f\in\mathfrak{L}(H^2)$ and $\|T_f\|=\|f\|_\infty$ \cite[page 160]{Doug}.  For any subset $S$ of $L^\infty(\mathbb{T})$, let $\mathcal{I}(S)$ be the smallest closed subalgebra of $\mathfrak{L}(H^2)$ containing $\{T_f:f\in S\}$.  $\mathcal{I}(S)$ is called a Toeplitz algebra.

Toeplitz algebras and Toeplitz operators on $H^2$ with quasicontinuous symbols have been studied by many different people in the literature.  In particular the path-connected components of the Fredholm operators in $\mathcal{I}(C(\mathbb{T}))$ have been completely determined.  The purpose of this paper is to do the same for $\mathcal{I}(QC)$.  In doing this, we will have an example of a Toeplitz algebra whose Fredholm operators have uncountably many path-connected components.  Until this paper, no such example has been published.

\section{Preliminaries}
\subsection{More Notation}\label{notat}

Here we will establish some more notation.  Let $\mathbb{D}$ be the open unit disk in $\mathbb{C}$.  For any $D\subseteq \mathbb{T}$, let $|D|$ denote the Lebesgue measure of $D$ on $\mathbb{T}$.  Let $C_{\mathbb{R}}(\mathbb{T})$ be the subset of $C(\mathbb{T})$ consisting of all real-valued functions, $L_{\mathbb{R}}^\infty(\mathbb{T})$ be the subset of $L^\infty(\mathbb{T})$ consisting of all real-valued functions and $QC_\mathbb{R}$ denote the set of all real-valued functions in $QC$.  We will also use the following conventions: for any two sets $Q$ and $B$ and for any function $g$, let $QB=\{cb:c\in Q, b\in B\}$, $gQ=\{g\}Q$, $Q\setminus B=\{q\in Q: q\notin B\}$, $Q+B=\{q+b:q\in Q, b\in B\}$ and $\exp(Q)=\{\exp(q):q\in Q\}$.

\subsection{Important Functions}

Now we will define some important functions that we will use.  Let $w\in L^1(\mathbb{T})$.  Then following \cite[page 56 and page 81]{krantz} and \cite[page 255]{zhuOp} respectively, we define $\widetilde{w}:\mathbb{T}\rightarrow\mathbb{C}$ and $\widehat{w}:\mathbb{D}\rightarrow\mathbb{C}$ by \begin{equation}\label{hilDef}\widetilde{w}(\theta)=\sum_{n\in\mathbb{Z}}(-i\sgn(n))a_w(n)\chi_n(\theta)\end{equation} where \begin{equation}\label{coeff}a_w(n)=\frac{1}{2\pi}\int_0^{2\pi} w(t)\chi_{-n}(t)dt \hspace{.2cm} \text{ for all } n\in\mathbb{Z},\end{equation} and \begin{equation}\label{poiDef}\widehat{w}(z)=\frac{1}{2\pi}\int_0^{2\pi} w(t)\text{Re}\left[\frac{e^{it}+z}{e^{it}-z}\right]dt.\end{equation}  We will call $\widetilde{w}$ the Hilbert Transform of $w$ and $\widehat{w}$ the Poisson transform of $w$.

Another important function we will occasionally use is $\mathcal{C}(w):\mathbb{D}\cup \mathbb{T}\rightarrow \mathbb{C}$ defined by \begin{equation}\label{conjDef}\begin{split}&\mathcal{C}(w)(z)=\frac{1}{2\pi}\int_0^{2\pi} w(t)\text{Im}\left[\frac{e^{it}+z}{e^{it}-z}\right]dt \text{ for all } z\in \mathbb{D}\\& \mathcal{C}(w)(e^{i\theta})=\lim_{r\nearrow1}\mathcal{C}(w)(re^{i\theta}) \text{ for all } \theta\in[0,2\pi)\end{split}\end{equation}  \cite[page 256]{zhuOp}.  As in \cite[page 256]{zhuOp}, we will call $\mathcal{C}(w)$ the conjugation operator of $w$ and the mapping $w \mapsto \mathcal{C}(w)$ the conjugation operator.

\subsection{Important Spaces}\label{imSpaces}

In this section, we will define the spaces of functions that we will be working with.  For any $f\in L^1(\mathbb{T})$ and any subarc $I$ of $\mathbb{T}$, let \[f_I=\frac{1}{|I|}\int_I f(\theta)d\theta.\]  For any $f\in L^2(\mathbb{T})$, define $\|f\|_{BMO}$ by \[\|f\|_{BMO}= \sup\left\{\left[\frac{1}{|I|}\int_I\left|f(\theta)-f_I\right|^2d\theta\right]^{\frac{1}{2}}:I \text{ is any subarc of }\mathbb{T}\right\}.\]  Let $BMO$ and $VMO$ be defined by \[BMO=\left\{f\in L^2(\mathbb{T}):\|f\|_{BMO}<\infty\right\}\] and \begin{equation}\label{VMOdef}VMO=\left\{f\in BMO:\lim_{|I|\rightarrow 0}\left[\frac{1}{|I|}\int_I\left|f(\theta)-f_I\right|^2d\theta\right]=0\right\}.\end{equation}  We say $BMO$ is the space of all functions in $L^2(\mathbb{T})$ that have bounded mean oscillation on $\mathbb{T}$ and $VMO$ is the space of all functions in $BMO$ that have vanishing mean oscillation on $\mathbb{T}$.  Here our definitions of $BMO$ and $VMO$ are taken from \cite[page 266 and page 275]{zhuOp}.  We will also denote the set of all real-valued functions in $VMO$ by $VMO_{\mathbb{R}}$.  In fact \begin{equation}\label{quasi} QC=VMO\cap L^\infty\left(\mathbb{T}\right)\end{equation} \cite[page 377]{Garn}.

Note that some people use the conjugation operator instead of the Hilbert transform when working with $BMO$ and $VMO$.  However for every $f\in L^1\left(\mathbb{T}\right)$, Lemma 1.2 \cite[page 103]{Garn} and Theorem 1.6.11 \cite[page 87]{krantz} together yield \begin{equation}\label{conjHil} -\widetilde{f}=\mathcal{C}(f) \text{ [a.e] on } \mathbb{T}.\end{equation}

Moreover as proved in \cite[page 277]{zhuOp}, \[VMO=C(\mathbb{T})+\mathcal{C}\left(C(\mathbb{T})\right).\]  It is also easy to see that $f\in L^1(\mathbb{T})$ and real-valued implies $\widetilde{f}$ is real-valued.  Then by \eqref{conjHil} and some straightforward calculations, \begin{equation}\label{vmoReal} VMO=C(\mathbb{T})+\widetilde{C(\mathbb{T})} \text{ and }VMO_\mathbb{R}=C_\mathbb{R}(\mathbb{T})+\widetilde{C_\mathbb{R}(\mathbb{T})}. \end{equation}

\section{Fredholm Index}\label{SectInd}

Let $\mathfrak{LC}(H^2)$ be the set of all compact operators on $H^2$ and $\mathfrak{F}(H^2)$ be the set of all Fredholm operators on $H^2$.  For any $A\in\mathfrak{L}(H^2)$ let $A^*$ be the adjoint of $A$.  Let $j:\mathfrak{F}(H^2)\rightarrow\mathbb{Z}$ be defined by\[j(A)=\dim(\ker(A))-\dim(\ker(A^*)).\]  Then $j$ is surjective, continuous with respect to the operator norm and for any $A,B\in \mathfrak{F}(H^2)$ and $K\in\mathfrak{LC}(H^2)$, we have $j(AB)=j(A)+j(B)$ and $j(A+K)=j(A)$ \cite[page 123]{Doug}.

Let \[(H^\infty+C(\mathbb{T}))^{-1}=\{f\in H^\infty+C(\mathbb{T}):f^{-1}\in H^\infty+C(\mathbb{T})\}.\]  Following \cite[page 169]{Doug} and \cite{sarason}, we will work with the function $\ind:(H^\infty+C(\mathbb{T}))^{-1}\rightarrow \mathbb{Z}$ defined by \[\ind(f)=n\left(\hat{f}_r\left(e^{i\theta}\right),0\right)\] where $\hat{f}_r\left(e^{i\theta}\right)=\widehat{f}(re^{i\theta})$ for all $\theta\in[0,2\pi)$ and $n\left(\hat{f}_r(e^{i\theta}),0\right)$ is the winding number of $\hat{f}_r\left(e^{i\theta}\right)$ about 0 for all $r>0$ such that $r_0<r<1$ where $r_0=r_0(f)>0$ is as in Theorem 4 \cite{sarason}.  By both Corollary 7.34 \cite[page 168]{Doug} and Theorem 7.36 \cite[page 169]{Doug}, \begin{equation}\label{index} j(T_f)=-\ind(f) \text{ for all } f\in (H^\infty+C(\mathbb{T}))^{-1}.\end{equation}  It follows that $\ind$ is continuous with respect to the norm $\|\cdot\|_\infty$.  We will prove below that $\ind(fg)=\ind(f)+\ind(g)$ for all $f,g\in (H^\infty+C(\mathbb{T})^{-1}$.

\section{Main Results}\label{motivQC} Recall above that $\mathfrak{I}(QC)$ be the $C^*$-algebra generated by $\{T_\phi:\phi\in QC\}$ and \[\mathcal{I}=\{f\in QC:f^{-1}\in QC\}.\]   and $\mathcal{F}$ be defined by \[\mathcal{F}=\mathfrak{F}(H^2)\cap\mathfrak{I}(QC).\]  In this paper we will completely classify $\mathcal{I}$ and $\mathcal{F}$, and to give a complete description of the path-connected components of $\mathcal{I}$ and of $\mathcal{F}$.  More specifically, we will prove the following three theorems:

\begin{thm}\label{classI} \[\mathcal{I}=\{\chi_n\}_{n\in\mathbb{Z}}\exp\left(QC_\mathbb{R}\right)\exp\left(iVMO_{\mathbb{R}}\right).\]

\end{thm}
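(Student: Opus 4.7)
I will prove the two inclusions separately.

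\emph{Inclusion ``$\supseteq$''.} The set $\mathcal{I}$ is a group under pointwise multiplication, so it suffices to check that each of the three factor types lies in $\mathcal{I}$. First, $\chi_n\in C(\mathbb{T})\subseteq QC$ with inverse $\chi_{-n}\in QC$. Second, for $u\in QC_\mathbb{R}$, the series $\exp(u)=\sum_{k\ge 0} u^{k}/k!$ converges in the commutative Banach algebra $QC$, and its pointwise inverse $\exp(-u)$ is produced the same way; hence $\exp(u)\in\mathcal{I}$. Third, for $v\in VMO_\mathbb{R}$, the function $\exp(iv)$ has modulus $1$ and so lies in $L^\infty(\mathbb{T})$; by \eqref{quasi} it suffices to show $\exp(iv)\in VMO$, which follows from a John--Nirenberg-type estimate applied to the vanishing-oscillation condition \eqref{VMOdef} on $v$. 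Products of these three types then lie in $\mathcal{I}$.

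\emph{Inclusion ``$\subseteq$''.} Fix $f\in\mathcal{I}$. Since $QC\subseteq H^\infty+C(\mathbb{T})$ and $f$ is invertible in $QC$, the integer $n=\ind(f)$ is well-defined. Set $g=f\chi_{-n}$; once the multiplicativity of $\ind$ announced at the end of Section \ref{SectInd} is in hand, $\ind(g)=0$, and $g\in\mathcal{I}$. Because $QC$ is a $C^*$-subalgebra of $L^\infty(\mathbb{T})$, $|g|^{2}=g\bar g\in QC$, and since $f$ is invertible in $L^\infty(\mathbb{T})$, $|g|$ is bounded and bounded away from $0$. Continuous functional calculus then places $u:=\log|g|=\log|f|$ in $QC_\mathbb{R}$. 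Put $h=g\exp(-u)$: this $h$ is a unimodular element of $\mathcal{I}$ with $\ind(h)=0$, and the desired representation $f=\chi_n\exp(u)\exp(iv)$ reduces to producing a $v\in VMO_\mathbb{R}$ with $h=\exp(iv)$.

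\emph{The hard step.} The essential content is the lifting: every unimodular $h\in\mathcal{I}$ with $\ind(h)=0$ is of the form $\exp(iv)$ for some $v\in VMO_\mathbb{R}$. My plan is to exploit the decomposition $VMO_\mathbb{R}=C_\mathbb{R}(\mathbb{T})+\widetilde{C_\mathbb{R}(\mathbb{T})}$ from \eqref{vmoReal}. One route is approximation: choose unimodular $h_k\in C(\mathbb{T})$ with winding number $0$ converging to $h$ in $L^\infty$; each $h_k$ lifts to a continuous real phase $v_k$, and using the Hilbert-transform formula \eqref{hilDef} together with the conjugation identity \eqref{conjHil}, one decomposes $v_k$ as a continuous part plus the Hilbert transform of a continuous part, then extracts a limit of the required form. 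An alternative is to work with the Poisson extension \eqref{poiDef}: the triviality $\ind(h)=0$ provides a single-valued branch of $\log\hat h$ on a near-boundary annulus, and the boundary values of its imaginary part yield $v$. Under either approach, the crux is verifying that the candidate $v$ lies in $VMO_\mathbb{R}$ rather than only in $BMO_\mathbb{R}$; this is where the quasicontinuity of $h$, rather than mere membership in $H^\infty+C(\mathbb{T})$, must be exploited.
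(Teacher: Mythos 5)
Your inclusion $\supseteq$ is fine and is actually more elementary than the paper's (the paper deduces it by first classifying $(H^\infty+C(\mathbb{T}))^{-1}$ in Proposition \ref{saras} and then applying Lemma \ref{I}); do note, though, that the needed estimate for $\exp(iv)\in VMO$ is the Lipschitz-composition bound $\frac{1}{|I|}\int_I|\Phi(v)-\Phi(v_I)|^2\le L^2\frac{1}{|I|}\int_I|v-v_I|^2$, not really a John--Nirenberg inequality. Your splitting of the modulus via the $C^*$-functional calculus, giving $\log|f|\in QC_\mathbb{R}$, is exactly the step where quasicontinuity is genuinely used, and that part is correct.

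The gap is the ``hard step,'' which you have only sketched. The paper closes it by quoting Theorem \ref{uniModInv} (Sarason's Theorem 4), which already gives that any unimodular $w\in(H^\infty+C(\mathbb{T}))^{-1}$ of index $n$ equals $\chi_n\exp(i(u+\mathcal{C}(v)))$ with $u,v\in C_\mathbb{R}(\mathbb{T})$; combined with \eqref{conjHil} and \eqref{vmoReal} this immediately puts the phase in $VMO_\mathbb{R}$, \emph{without} any appeal to quasicontinuity of $h$. So your remark that the $VMO$ membership of the phase ``is where the quasicontinuity of $h$ must be exploited'' misplaces where $QC$ is actually used: it is used only for the modulus, and the phase statement is true for all of $(H^\infty+C(\mathbb{T}))^{-1}$. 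Moreover, your first proposed route is unworkable as written: $C(\mathbb{T})$ is a closed subalgebra of $L^\infty(\mathbb{T})$, so a $QC$ function that is not already continuous cannot be the $\|\cdot\|_\infty$-limit of continuous unimodular $h_k$. The second route (single-valued $\log\widehat{h}$ near the boundary and taking boundary values) is in the spirit of Sarason's proof, but you would still need to establish the $VMO$ estimate on those boundary values, which is precisely the content of the cited theorem; as it stands, the proof is incomplete without either invoking Theorem \ref{uniModInv} or reproving it. One further small gap: in passing from $g=f\chi_{-n}$ to $h=g\exp(-u)$ you implicitly use $\ind(\exp(-u))=0$ for $u\in QC_\mathbb{R}$, which requires an argument (the paper gives one in Lemma \ref{ind} via a homotopy in $(H^\infty+C(\mathbb{T}))^{-1}$).
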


\begin{thm}\label{mainI}
For any $h\in VMO_{\mathbb{R}}$, let \begin{equation}\label{pPath} P_h=\{\exp(ig): g\in QC_{\mathbb{R}}+\{h\}\}\end{equation}
and let \begin{equation}\label{uniPath} Q_h=\exp(QC_{\mathbb{R}})P_h. \end{equation}  Then the path-connected components of $\mathcal{I}$ are $\{\chi_kQ_g\}_{\left\{\begin{subarray}{l} k\in\mathbb{Z} \text{ and either }\\ g\in VMO_{\mathbb{R}}\setminus L_\mathbb{R}^\infty \text{ or }\\ g=0\end{subarray}\right\}}$.
\end{thm}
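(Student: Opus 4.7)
The plan is to treat $\mathcal{I}$ as the group of units of the commutative Banach algebra $QC$ (with the $L^\infty$ norm), identify the path component of $1$ as $Q_0$, and then conclude that the path components of $\mathcal{I}$ are precisely the cosets of $Q_0$; Theorem \ref{classI} will then match these cosets to the listed family $\{\chi_k Q_g\}$.

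First I would check that $Q_0=\exp(QC_{\mathbb{R}})\exp(iQC_{\mathbb{R}})=\exp(QC)$ is the identity component. The exponential is a continuous group homomorphism $(QC,+)\to(\mathcal{I},\cdot)$, so $Q_0$ is a subgroup; it is open because $\log(1+h)=\sum_{n\ge1}(-1)^{n+1}h^n/n$ converges in $QC$ whenever $\|h\|_\infty<1$, and path-connected via $t\mapsto\exp(tw)$. An open subgroup of a topological group is also closed, hence a union of path components, so it is exactly the path component of $1$; each coset $\chi_k\exp(ig)\cdot Q_0=\chi_kQ_g$ is therefore a path component of $\mathcal{I}$. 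By Theorem \ref{classI} every $f\in\mathcal{I}$ factors as $f=\chi_k\exp(u)\exp(iv)$ with $k\in\mathbb{Z}$, $u\in QC_{\mathbb{R}}$, $v\in VMO_{\mathbb{R}}$, so $f\in\chi_kQ_v$; by \eqref{quasi}, either $v\in L^\infty_{\mathbb{R}}$ (then $v\in QC_{\mathbb{R}}$ and $Q_v=Q_0$, recorded as $g=0$) or $v\in VMO_{\mathbb{R}}\setminus L^\infty_{\mathbb{R}}$ (recorded as $g=v$). Hence the listed family exhausts $\mathcal{I}$.

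It remains to check distinctness. Suppose $\chi_{k_1}Q_{g_1}\cap\chi_{k_2}Q_{g_2}\ne\emptyset$; unpacking the definition, there exist $u_j,w_j\in QC_{\mathbb{R}}$ with
\[\chi_{k_1-k_2}\exp(u_1-u_2)\exp(i(g_1-g_2+w_1-w_2))=1\text{ a.e.}\]
Taking moduli forces $u_1=u_2$, leaving $\chi_{k_1-k_2}\exp(ih)=1$ for $h:=g_1-g_2+w_1-w_2\in VMO_{\mathbb{R}}$. Applying $\ind$ (multiplicative by the result promised at the end of Section \ref{SectInd}) and using $\ind(\chi_n)=n$ together with the sub-lemma $\ind(\exp(ih))=0$ for $h\in VMO_{\mathbb{R}}$ yields $k_1=k_2$. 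Then $\exp(ih)=1$ a.e., and a second sub-lemma forces $h$ to be essentially a constant in $2\pi\mathbb{Z}$, so $g_1-g_2\in QC_{\mathbb{R}}$ and $Q_{g_1}=Q_{g_2}$, consistent with the allowed ranges.

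The two sub-lemmas are the main obstacles. For $\ind(\exp(ih))=0$ when $h\in VMO_{\mathbb{R}}$, decompose $h=c+\widetilde d$ with $c,d\in C_{\mathbb{R}}(\mathbb{T})$ via \eqref{vmoReal}; then $F:=\exp(d+i\widetilde d)$ is the outer function with boundary modulus $e^d$, which is bounded and bounded below, so $F\in(H^\infty)^{-1}$, $T_F$ is invertible on $H^2$, and $\ind(F)=0$. Combined with $\ind(\exp(d))=\ind(\exp(ic))=0$ (since $t\mapsto\exp(td)$ and $t\mapsto\exp(itc)$ are $L^\infty$-continuous paths in $\mathcal{I}$ from $1$ to $\exp(d)$ and $\exp(ic)$, and $\ind$ is continuous and integer-valued) and multiplicativity, this yields $\ind(\exp(i\widetilde d))=0$ and hence $\ind(\exp(ih))=\ind(\exp(ic)\exp(i\widetilde d))=0$. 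For the rigidity sub-lemma, suppose $h\in VMO_{\mathbb{R}}$ with $h\in 2\pi\mathbb{Z}$ a.e.; for each $k\ge 1$ the function $\exp(ih/k)\in\mathcal{I}\subset QC$ takes only $k$-th roots of unity as values, so Lagrange-interpolation polynomial functional calculus produces characteristic-function idempotents $\chi_{E_j}\in QC$. The only idempotents of $QC$ are $0$ and $1$, since any $\chi_E$ with $0<|E|<2\pi$ violates the $VMO$ condition: the continuous function $\theta\mapsto|E\cap[\theta,\theta+\delta]|/\delta$ has average $|E|/(2\pi)\in(0,1)$, so by continuity it attains a value in $[1/4,3/4]$, giving an arc $I$ of length $\delta$ on which $\chi_E$ has mean-square oscillation at least $3/16$. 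Hence $h$ is essentially constant modulo $2\pi k$ for every $k$, which forces $h$ to be essentially constant.
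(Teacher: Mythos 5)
Your proof is correct and takes a genuinely different (and in places more economical) route than the paper, though it shares the three essential ingredients: identifying $\exp(QC)$ as the identity path component, using the index homomorphism to separate winding numbers, and a rigidity statement (the paper's Corollary~\ref{Z}) to separate the $Q_g$'s. Where you differ is in the organizing principle: you treat $\mathcal{I}$ as the unit group of the commutative Banach algebra $QC$, note that $\exp(QC)$ is an open (hence clopen) path-connected subgroup and therefore \emph{is} the identity path component, and then invoke the fact that path components of a topological group are precisely the cosets of the identity component. This collapses the paper's step-by-step buildup --- classifying $U_0$ (Lemma~\ref{pathCond} and \ref{pathComp}), then $I_0$ (Theorem~\ref{connIo}), then $I_k$ via a homeomorphism (Proposition~\ref{homIndex}, Theorem~\ref{connIk}), then gluing via the four-condition checklist of Lemma~\ref{equiv} --- into a single observation. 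Your approach is cleaner but requires the reader to know the coset fact; the paper's is longer but entirely self-contained from Lemma~\ref{equiv}. Your proof of the rigidity sub-lemma is also different: you extract idempotents $\chi_{E_j}$ from $\exp(ih/k)$ via Lagrange-interpolation functional calculus (using Theorem~\ref{classI} to see $\exp(ih/k)\in QC$, which is not circular since \ref{classI} is proved first) and then argue $QC$ has no nontrivial idempotents, whereas the paper proves the essential range of a real $VMO$ function is connected (Lemma~\ref{essRan}), citing Xia for $\chi_E\notin VMO$.

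Two small blemishes, neither fatal. First, your outer function should be $F=\exp(d-i\widetilde d)$, not $\exp(d+i\widetilde d)$, in the paper's sign convention (cf. Lemma~\ref{inHardy}); with the paper's definition of the Hilbert transform, $\exp(d+i\widetilde d)=\overline{\exp(d-i\widetilde d)}$ is conjugate-analytic and lies in $\overline{H^\infty}$, not $H^\infty$. Since $\ind(\exp(\pm i\widetilde d))$ differ only by sign and both end up $0$, the conclusion is unaffected, but the intermediate claim as written is wrong. Second, your sketch that $\chi_E\notin VMO$ for $0<|E|<2\pi$ asserts the continuous function $\theta\mapsto |E\cap[\theta,\theta+\delta]|/\delta$ attains a value in $[1/4,3/4]$ because its average lies in $(0,1)$; that fails if $|E|/(2\pi)<1/4$ or $>3/4$. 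You need to take $c<\min\bigl(|E|/(2\pi),\,1-|E|/(2\pi)\bigr)$ and then run the intermediate value argument, which does give a lower bound on the mean-square oscillation independent of $\delta$. The paper simply cites this fact (Proposition 5.3 of \cite{XiaVmo}), so this is a bonus sketch, but it should be stated with a correct constant.
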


\begin{thm}\label{mainF}
With the same notation from Theorem \ref{mainI}, let \begin{equation}\label{fredPath} {_m}V_h=\{T_f+K\in \mathcal{F}:f\in\chi_{-m}Q_{h}, K\in\mathfrak{LC}(H^2)\}.\end{equation}  Then the path-connected components of $\mathcal{F}$ are $\{{_m}V_h\}_{\left\{\begin{subarray}{l} m\in\mathbb{Z} \text{ and either } \\ h\in VMO_{\mathbb{R}}\setminus L_\mathbb{R}^\infty \text{ or }\\ h=0\end{subarray}\right\}}$.

\end{thm}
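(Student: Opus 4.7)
The plan is to lift Theorem \ref{mainI} from the invertibles $\mathcal{I}\subseteq QC$ to the Fredholms $\mathcal{F}\subseteq\mathfrak{I}(QC)$ via the canonical Toeplitz symbol map. The central structural fact I would invoke (either from earlier in the paper or, classically, from the QC commutator theorem of Sarason--Davidson that $T_fT_g-T_{fg}$ is compact for $f,g\in QC$) is that the quotient $\mathfrak{I}(QC)/\mathfrak{LC}(H^2)$ is isometrically $*$-isomorphic to $QC$ via $T_f+\mathfrak{LC}(H^2)\mapsto f$. Writing $\sigma:\mathfrak{I}(QC)\to QC$ for the induced contractive $*$-homomorphism, every $A\in\mathfrak{I}(QC)$ admits a representation $A=T_f+K$ with $f=\sigma(A)$ and $K\in\mathfrak{LC}(H^2)$, and $A\in\mathcal{F}$ iff $f\in\mathcal{I}$. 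Combined with Theorem \ref{classI} and Theorem \ref{mainI}, this immediately shows $\mathcal{F}=\bigcup_{m,h}{_m}V_h$ over the index set in the statement.

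For path-connectedness of a single ${_m}V_h$, take $T_{f_0}+K_0$ and $T_{f_1}+K_1$ in it, so that $f_0,f_1\in\chi_{-m}Q_h$. By Theorem \ref{mainI} the set $\chi_{-m}Q_h$ is path-connected in $\mathcal{I}$, so pick a continuous path $f_t$ in $\chi_{-m}Q_h$ from $f_0$ to $f_1$. Since $\|T_{f_t}-T_{f_s}\|=\|f_t-f_s\|_\infty$, the map $t\mapsto T_{f_t}$ is a continuous path of Fredholm operators (each $T_{f_t}$ is Fredholm as $f_t\in\mathcal{I}$). The affine paths $T_{f_0}+(1-t)K_0$ and $T_{f_1}+tK_1$ connect $T_{f_0}+K_0$ to $T_{f_0}$ and $T_{f_1}$ to $T_{f_1}+K_1$ respectively, and remain in $\mathcal{F}$ because compact perturbations preserve Fredholmness. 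Concatenation produces the desired path inside ${_m}V_h$.

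For the converse, suppose $t\mapsto A_t$ is a continuous path in $\mathcal{F}$. Since $\sigma$ is contractive, $t\mapsto\sigma(A_t)$ is a continuous path in $\mathcal{I}\subseteq (QC,\|\cdot\|_\infty)$. By Theorem \ref{mainI} its endpoints must lie in the same path component $\chi_{-m}Q_h$ of $\mathcal{I}$, and hence the endpoints $A_0$ and $A_1$ lie in the same ${_m}V_h$.

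The main obstacle is establishing (or citing cleanly) the structural isomorphism $\mathfrak{I}(QC)/\mathfrak{LC}(H^2)\cong QC$; once that is in hand, the rest is essentially a mechanical transfer of the $\mathcal{I}$-analysis from Theorem \ref{mainI} through $\sigma$, using only continuity of $f\mapsto T_f$ and the fact that adding compacts preserves both Fredholmness and (via the straight line in $\mathfrak{LC}(H^2)$) the connectivity class in $\mathcal{F}$.
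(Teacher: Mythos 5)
Your proposal is correct and reaches the same classification, but by a genuinely different route. The paper first stratifies $\mathcal{F}$ by Fredholm index, proving (Theorem~\ref{pathEk}) that $\{{_k}V_g\}$ are the path components of each $E_k=\{A\in\mathcal{F}:j(A)=k\}$, and then assembles Theorem~\ref{mainF} from this stratification using continuity of $j$ together with Lemma~\ref{equiv}. You instead bypass the index decomposition and lift the component structure of $\mathcal{I}$ all at once through the symbol map $\sigma:\mathfrak{I}(QC)\to QC$ coming from $\mathfrak{I}(QC)/\mathfrak{LC}(H^2)\cong QC$; in the paper's own terms this quotient isomorphism is exactly Lemma~\ref{ToeQC} combined with the norm estimate in Lemma~\ref{compactToe}, which is what makes $\sigma$ well-defined and contractive. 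Contractiveness of $\sigma$, together with Theorem~\ref{F} (which ensures $\sigma$ carries $\mathcal{F}$ into $\mathcal{I}$), pushes any path in $\mathcal{F}$ down to a path in $\mathcal{I}$ with endpoints in a single $\chi_{-m}Q_h$; in the other direction the isometry $f\mapsto T_f$ and the affine contraction of the compact summand lift a path in $\chi_{-m}Q_h$ to a path in ${_m}V_h$, with compact perturbations preserving Fredholmness and membership in $\mathfrak{I}(QC)$ throughout. Pairwise disjointness of the ${_m}V_h$, which you leave implicit, comes for free from ${_m}V_h=\sigma^{-1}(\chi_{-m}Q_h)\cap\mathcal{F}$ and the disjointness of the $\chi_{-m}Q_h$ in Theorem~\ref{mainI}. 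Both arguments rest on the same hard facts, Lemma~\ref{compactToe} and Theorem~\ref{mainI}; yours is tighter because it avoids the intermediate index-by-index step, while the paper's approach yields the path components of each $E_k$ (Theorem~\ref{pathEk}) as a standalone byproduct.
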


In addition to the above theorems, we will also prove the following:

\begin{cor}\label{uncountable}
$\mathcal{I}$ and $\mathcal{F}$ both have uncountably many path-connected components.
\end{cor}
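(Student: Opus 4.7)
The plan is to combine the classifications in Theorems~\ref{mainI} and~\ref{mainF} with an explicit uncountable family in $VMO_\mathbb{R} \setminus L_\mathbb{R}^\infty$. Both theorems list the components of $\mathcal{I}$ and $\mathcal{F}$ in the form $\chi_k Q_g$ (respectively ${_m}V_h$) with the second coordinate running over $\{0\} \cup (VMO_\mathbb{R} \setminus L_\mathbb{R}^\infty)$, so it suffices to exhibit uncountably many $g \in VMO_\mathbb{R} \setminus L_\mathbb{R}^\infty$ for which the sets $Q_g$ are pairwise distinct; the corresponding statement for $\mathcal{F}$ will then follow by applying the same construction through Theorem~\ref{mainF}.

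Using $VMO_\mathbb{R} = C_\mathbb{R}(\mathbb{T}) + \widetilde{C_\mathbb{R}(\mathbb{T})}$ from~\eqref{vmoReal}, I would first fix an unbounded $\phi \in VMO_\mathbb{R} \setminus L_\mathbb{R}^\infty$; such a $\phi$ exists because one may take $\phi = \widetilde{g}$ for a continuous real $g$ whose Hilbert transform develops a logarithmic blow-up (for example, a $g$ whose modulus of continuity fails the Dini condition at some point). Then $\{t\phi : t \in (0,1]\}$ is an uncountable subset of $VMO_\mathbb{R} \setminus L_\mathbb{R}^\infty$, and for $t \neq s$ the difference $(t-s)\phi$ is unbounded. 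For each $t$ I would set $f_t := \exp(it\phi)$; since $VMO$ is preserved by composition with the Lipschitz maps $\cos$ and $\sin$, one has $f_t \in VMO \cap L^\infty = QC$, and likewise $\exp(-it\phi) \in QC$, so $f_t \in \mathcal{I}$.

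By Theorem~\ref{mainI}, $f_t$ lies in the listed component $Q_{t\phi}$; so it only remains to show $Q_{t\phi} \neq Q_{s\phi}$ whenever $t \neq s$. If these components coincided, the inclusion $\exp(it\phi) \in Q_{s\phi}$ would give $\exp(it\phi) = e^{u}\exp(i(s\phi+w))$ for some $u,w \in QC_\mathbb{R}$. Comparing moduli forces $u=0$, and then comparing phases forces $(t-s)\phi = w + 2\pi n$ for some measurable integer-valued $n$. The main technical obstacle is to rule out this last identity: because $(t-s)\phi$ is unbounded while $w$ is bounded, $n$ must traverse arbitrarily many integer values near the unbounded locus of $\phi$, and since $n$ is integer-valued the only way for it to change values is through jump-type discontinuities; these jumps transfer to $w = (t-s)\phi - 2\pi n$ in unit multiples of $2\pi$ and force the mean oscillation of $w$ to stay bounded below on arbitrarily small intervals around them, contradicting $w \in VMO_\mathbb{R}$.

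This produces the uncountable family $\{Q_{t\phi}\}_{t \in (0,1]}$ of pairwise distinct components of $\mathcal{I}$. Finally, since each $f_t$ is invertible in $QC \subset H^\infty + C(\mathbb{T})$, the Toeplitz operator $T_{f_t}$ is Fredholm, and applying Theorem~\ref{mainF} in the same way to the collection $\{{_0}V_{t\phi}\}_{t \in (0,1]}$ yields uncountably many distinct components of $\mathcal{F}$, completing the corollary.
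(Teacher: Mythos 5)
Your overall strategy mirrors the paper's: exhibit an uncountable family $\{t\phi\}_{t\in(0,1]}$ (the paper uses $\{\beta H\}_{\beta\in\mathbb{R}}$ with the $H$ of Proposition~\ref{Ex}) inside $VMO_\mathbb{R}\setminus L_\mathbb{R}^\infty$, show the corresponding $Q_{t\phi}$ are pairwise distinct, and then read off uncountably many components of $\mathcal{I}$ and $\mathcal{F}$ from Theorems~\ref{mainI} and~\ref{mainF}. That plan is sound. The problem lies in the execution of the decisive step.

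You correctly reduce $Q_{t\phi}\neq Q_{s\phi}$ to ruling out $(t-s)\phi = w + 2\pi n$ with $w\in QC_\mathbb{R}$ and $n$ integer-valued, but the argument you give for this is not a proof. You reason about $n$ changing values through \enquote{jump-type discontinuities} that \enquote{transfer to $w$} and keep its mean oscillation bounded below. A measurable integer-valued function has no pointwise notion of a jump; the obstruction is a measure-theoretic one about the essential range, and your heuristic does not engage with how to estimate the local mean oscillation of $w$ from the behavior of $n$. The rigorous (and already available) route is simpler and goes through $n$, not $w$: since $(t-s)\phi\in VMO_\mathbb{R}$ and $w\in QC_\mathbb{R}\subseteq VMO_\mathbb{R}$, the function $n=\tfrac{1}{2\pi}\bigl((t-s)\phi - w\bigr)$ lies in $VMO_\mathbb{R}$ and is integer-valued, so Corollary~\ref{Z} (proved via Lemma~\ref{essRan}) forces $n$ constant; then $(t-s)\phi = w + 2\pi n \in QC_\mathbb{R}\subseteq L_\mathbb{R}^\infty$, contradicting $\phi\notin L_\mathbb{R}^\infty$. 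This is exactly what the paper invokes, and it is the tool that makes your intuition precise. You should cite it rather than re-derive it informally. A secondary gap: you assert the existence of an unbounded $\phi\in VMO_\mathbb{R}\setminus L_\mathbb{R}^\infty$ with only a sketch of a conjugate-function example; the paper devotes Proposition~\ref{Ex} to an explicit construction and a genuine proof of unboundedness (via Ces\`aro means), and something at that level of detail is needed, since showing that a concrete conjugate function is actually in $VMO_\mathbb{R}\setminus L_\mathbb{R}^\infty$ is not automatic.
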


\section{Classification of $\mathcal{I}$}
\subsection{Important lemmas}\label{Prelem}

First we will show $\ind(fg)=\ind(f)+\ind(g)$ for all $f,g\in (H^\infty+C(\mathbb{T}))^{-1}$.  To prove this, we will need the following two propositions found in \cite[page 159 and page 164]{Doug}.

\begin{prop}\label{commToe}
If $\phi\in L^\infty$ and $\psi,\overline{\theta}\in H^\infty$, then $T_{\phi}T_{\psi}=T_{\phi\psi}$ and $T_{\theta}T_\phi=T_{\theta\phi}$.
\end{prop}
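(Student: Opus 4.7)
The plan is to prove the two identities by exploiting a single structural fact: multiplication by a bounded analytic function preserves $H^2$. Recall the definition $T_f(g)=P(fg)$, and note that $P$ acts as the identity on $H^2$ and is the zero map on the orthogonal complement of $H^2$ in $L^2(\mathbb{T})$.

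For the first identity, fix $\psi\in H^\infty$ and $\phi\in L^\infty$. For any $g\in H^2$, the product $\psi g$ lies in $H^2$, so $P(\psi g)=\psi g$. Therefore
\begin{equation*}
T_\phi T_\psi(g)=T_\phi\bigl(P(\psi g)\bigr)=T_\phi(\psi g)=P(\phi\psi g)=T_{\phi\psi}(g),
\end{equation*}
which is the first assertion. Here one uses only that $\psi\cdot H^2\subseteq H^2$ whenever $\psi\in H^\infty$, which is immediate from the definition of $H^2$ in terms of vanishing negative Fourier coefficients.

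For the second identity, I would pass to adjoints. First one verifies from the definition (using that $P$ is self-adjoint as an operator on $L^2$) that $T_f^*=T_{\overline{f}}$ for every $f\in L^\infty(\mathbb{T})$. Now suppose $\overline{\theta}\in H^\infty$ and $\phi\in L^\infty$. Taking adjoints and applying the first identity with the roles of the $L^\infty$-symbol and the $H^\infty$-symbol played by $\overline{\phi}$ and $\overline{\theta}$ respectively, we get
\begin{equation*}
(T_\theta T_\phi)^*=T_\phi^*T_\theta^*=T_{\overline{\phi}}T_{\overline{\theta}}=T_{\overline{\phi}\,\overline{\theta}}=T_{\overline{\theta\phi}}=(T_{\theta\phi})^*.
\end{equation*}
Since the adjoint map is injective, $T_\theta T_\phi=T_{\theta\phi}$, as required.

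There is no real obstacle here; the only subtlety is making sure to invoke $H^\infty$-invariance of $H^2$ on the correct side of the product (on the right for $T_\phi T_\psi$, which becomes the left after taking adjoints, matching the hypothesis $\overline{\theta}\in H^\infty$ for the second identity). Everything else reduces to the basic identity $T_f^*=T_{\overline{f}}$ and the idempotence and self-adjointness of $P$.
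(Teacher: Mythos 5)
Your proof is correct and is essentially the standard argument: the paper itself does not prove this proposition but cites it from Douglas (Proposition 7.5, p.\ 159), whose proof is exactly this — the first identity by direct computation using $H^\infty\cdot H^2\subseteq H^2$ so that $P(\psi g)=\psi g$, and the second by taking adjoints via $T_f^*=T_{\overline f}$. Your note about which side the $H^\infty$-invariance is invoked on, and how adjoints swap sides, is the right thing to keep in mind and matches the cited proof.
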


\begin{prop}\label{ToeCommCompact}
If $\phi\in C(\mathbb{T})$ and $\psi\in L^\infty$, then $T_{\phi}T_{\psi}-T_{\phi\psi},T_{\psi}T_{\phi}-T_{\psi\phi}\in\mathfrak{LC}(H^2)$.
\end{prop}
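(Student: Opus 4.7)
The plan is to exploit Proposition \ref{commToe} after reducing the general claim to trigonometric monomials. Since $\|T_f\|=\|f\|_\infty$, the maps $\phi\mapsto T_\phi T_\psi - T_{\phi\psi}$ and $\phi\mapsto T_\psi T_\phi - T_{\psi\phi}$ are bounded linear maps from $L^\infty(\mathbb{T})$ into $\mathfrak{L}(H^2)$ with norm at most $2\|\psi\|_\infty$. Since $\mathfrak{LC}(H^2)$ is norm-closed in $\mathfrak{L}(H^2)$ and trigonometric polynomials are $\|\cdot\|_\infty$-dense in $C(\mathbb{T})$, it suffices to verify the claim when $\phi$ is a trigonometric polynomial, and then by linearity when $\phi=\chi_n$ for some $n\in\mathbb{Z}$.

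Now I invoke Proposition \ref{commToe}. Since $\chi_n\in H^\infty$ for $n\geq 0$, that proposition yields $T_\psi T_{\chi_n}=T_{\psi\chi_n}$, so the second semi-commutator vanishes identically in that range. Similarly, since $\overline{\chi_n}=\chi_{-n}\in H^\infty$ for $n\leq 0$, the proposition yields $T_{\chi_n}T_\psi=T_{\chi_n\psi}$, so the first semi-commutator vanishes there. It remains to handle $T_{\chi_n}T_\psi-T_{\chi_n\psi}$ for $n\geq 1$ and $T_\psi T_{\chi_n}-T_{\psi\chi_n}$ for $n\leq -1$; I plan to show both are of finite rank.

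For $n\geq 1$, I apply Proposition \ref{commToe} to $\chi_{-n}\in\overline{H^\infty}$ with the $L^\infty$-factor equal to $\chi_n\psi$ to obtain $T_{\chi_{-n}}T_{\chi_n\psi}=T_\psi$. Multiplying on the left by $T_{\chi_n}$ and observing, by direct computation on the orthonormal basis $\{\chi_k\}_{k\geq 0}$ of $H^2$, that $T_{\chi_n}T_{\chi_{-n}}=I-P_n$, where $P_n$ is the orthogonal projection onto the $n$-dimensional subspace $\operatorname{span}\{\chi_0,\ldots,\chi_{n-1}\}$, gives $T_{\chi_n}T_\psi - T_{\chi_n\psi} = -P_n T_{\chi_n\psi}$, which has rank at most $n$. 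The case $n\leq -1$ is symmetric: writing $m=-n\geq 1$ and applying Proposition \ref{commToe} with second factor $\chi_m\in H^\infty$ and first factor $\psi\chi_{-m}$ yields $T_{\psi\chi_{-m}}T_{\chi_m}=T_\psi$; multiplying on the right by $T_{\chi_{-m}}$ and using $T_{\chi_m}T_{\chi_{-m}}=I-P_m$ as before gives $T_\psi T_{\chi_{-m}} - T_{\psi\chi_{-m}} = -T_{\psi\chi_{-m}}P_m$, again of finite rank.

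The main obstacle is purely bookkeeping: one must be careful which of the two one-sided multiplicative identities in Proposition \ref{commToe} applies in each case, and to use the correct asymmetric relation $T_{\chi_n}T_{\chi_{-n}}=I-P_n$ (which fails to be the identity) rather than $T_{\chi_{-n}}T_{\chi_n}=I$ (which is the identity) for $n\geq 1$. Once the reduction to monomials is made and these two identities are distinguished, the proof collapses into a short algebraic manipulation producing a finite-rank correction term.
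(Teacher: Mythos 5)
Your argument is correct and complete. Note that the paper itself does not prove this proposition; it is quoted from \cite[page 164]{Doug}, so the relevant comparison is with the standard proof there, which proceeds exactly along your lines: use linearity in $\phi$, the bound $\|T_f\|=\|f\|_\infty$, density of trigonometric polynomials in $C(\mathbb{T})$, and norm-closedness of $\mathfrak{LC}(H^2)$ to reduce to $\phi=\chi_n$, where the semi-commutators are finite rank. The only difference is cosmetic: Douglas settles the case $n=\pm1$ (a rank-one correction) and then telescopes by induction, handling the second semi-commutator by taking adjoints, whereas you obtain the finite-rank correction in one step from $T_{\chi_{-n}}T_{\chi_n\psi}=T_\psi$ together with $T_{\chi_n}T_{\chi_{-n}}=I-P_n$ (and its mirror image for the other semi-commutator). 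Your bookkeeping of which one-sided identity in Proposition \ref{commToe} applies in each range of $n$, and of the asymmetry between $T_{\chi_{-n}}T_{\chi_n}=I$ and $T_{\chi_n}T_{\chi_{-n}}=I-P_n$, is accurate, so the proof stands as written.
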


\begin{lem}\label{indSum}
For any $f,g\in (H^\infty+C(\mathbb{T}))^{-1}$, $\ind(fg)=\ind(f)+\ind(g)$.
\end{lem}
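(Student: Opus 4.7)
The plan is to reduce the statement about $\ind$ to a statement about the Fredholm index $j$ by way of the identity \eqref{index}. Since $f$, $g$, and $fg$ all lie in $(H^\infty+C(\mathbb{T}))^{-1}$, Theorem 7.36 of \cite{Doug} (already invoked for \eqref{index}) guarantees that $T_f$, $T_g$, and $T_{fg}$ are all Fredholm. If I can show that
\[
T_{fg}-T_fT_g\in\mathfrak{LC}(H^2),
\]
then the properties of $j$ recalled in Section \ref{SectInd} give
\[
j(T_{fg})=j(T_fT_g)=j(T_f)+j(T_g),
\]
and applying \eqref{index} three times converts this into $\ind(fg)=\ind(f)+\ind(g)$.

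To establish compactness of $T_{fg}-T_fT_g$, I would decompose $f=f_1+f_2$ and $g=g_1+g_2$ with $f_1,g_1\in H^\infty$ and $f_2,g_2\in C(\mathbb{T})$. Expanding both $T_fT_g$ and $T_{fg}$ by linearity of $\phi\mapsto T_\phi$ gives four summands of the form $T_{a}T_{b}-T_{ab}$. For the two summands in which the right factor is $g_1\in H^\infty$, Proposition \ref{commToe} gives $T_{f_1}T_{g_1}=T_{f_1g_1}$ and $T_{f_2}T_{g_1}=T_{f_2g_1}$, so those differences vanish. For the remaining two, namely $T_{f_1}T_{g_2}-T_{f_1g_2}$ and $T_{f_2}T_{g_2}-T_{f_2g_2}$, Proposition \ref{ToeCommCompact} applies (taking the continuous factor $g_2$, resp.\ $f_2$, as $\phi$ and the $L^\infty$ factor as $\psi$), yielding compactness.

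Putting the four pieces together shows $T_{fg}-T_fT_g$ is a sum of compact operators, hence compact, completing the argument. There is no real obstacle here: every ingredient is already recorded in the excerpt (the propositions on Toeplitz products, the formula \eqref{index}, and the additivity and compact-perturbation invariance of $j$). The only care needed is bookkeeping to make sure Proposition \ref{ToeCommCompact} is applied in the correct order of factors, which is exactly the reason the proposition was stated with both $T_\phi T_\psi-T_{\phi\psi}$ and $T_\psi T_\phi-T_{\psi\phi}$ listed as compact.
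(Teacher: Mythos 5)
Your proof is correct and follows essentially the same route as the paper: decompose $f$ and $g$ into their $H^\infty$ and $C(\mathbb{T})$ parts, expand $T_fT_g$ into four products, handle the two with right factor in $H^\infty$ exactly via Proposition \ref{commToe} and the two with right factor in $C(\mathbb{T})$ via Proposition \ref{ToeCommCompact}, and then pass through $j$ and \eqref{index}. Your accounting of which proposition applies to which of the four summands matches the paper's computation precisely; no substantive difference.
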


\begin{proof}
Let $f,g\in (H^\infty+C(\mathbb{T}))^{-1}$.  So we may write $f=h_f+p_f$ and $g=h_g+p_g$ where $h_f,h_g\in H^\infty$ and $p_f,p_g\in C(\mathbb{T})$.  Then by Proposition \ref{commToe} and Proposition \ref{ToeCommCompact}, there exist $L_1,L_2\in\mathfrak{LC}(H^2)$ such that \[\begin{split}T_fT_g&=T_{h_f}T_{h_g}+T_{h_f}T_{p_g}+T_{p_f}T_{h_g}+T_{p_f}T_{p_g}\\&=T_{h_fh_g}+T_{h_fp_g}+L_1+T_{p_fh_g}+T_{p_fp_g}+L_2\\&= T_{h_fh_g+h_fp_g+p_fh_g+p_fp_g}+L_1+L_2\\&=T_{fg}+L_1+L_2.\end{split}\]  Thus $T_fT_g=T_{fg}+J$ for some $J\in\mathfrak{LC}(H^2)$.  Then by Section \ref{SectInd} $j(T_{fg})=j(T_fT_g)=j(T_f)+j(T_g)$.  Therefore by \eqref{index}, $\ind(fg)=\ind(f)+\ind(g)$.

\end{proof}

We will also need the following lemmas in order to prove Theorem \ref{classI}.  Some of these results will also be used to prove Theorem \ref{mainI} and Theorem \ref{mainF}.  The first lemma will be used many times. 

\begin{lem}\label{exp}
Let $\mathfrak{B}$ be a Banach algebra, $G$ be the collection of all the invertible elements in $\mathfrak{B}$ and $G_0$ be the connected component in $G$ which contains the identity.  If $\mathfrak{B}$ is commutative then $G_0=\exp(\mathfrak{B})$.
\end{lem}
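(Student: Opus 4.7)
The plan is to prove the two inclusions $\exp(\mathfrak{B}) \subseteq G_0$ and $G_0 \subseteq \exp(\mathfrak{B})$ separately. The easy direction is the first: given any $a \in \mathfrak{B}$, the element $\exp(a)$ is invertible with inverse $\exp(-a)$, so $\exp(a) \in G$, and the map $t \mapsto \exp(ta)$ is a continuous path in $G$ joining the identity $e$ to $\exp(a)$. Hence $\exp(a)$ lies in the connected component $G_0$ of $e$. Commutativity is not needed here.

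For the reverse inclusion, my strategy is to show that $\exp(\mathfrak{B})$ is a clopen subset of $G$ containing $e$; since $G_0$ is the connected component of $e$ in $G$, this will force $G_0 \subseteq \exp(\mathfrak{B})$. First, commutativity gives the identity $\exp(a+b) = \exp(a)\exp(b)$ for all $a,b \in \mathfrak{B}$ (by rearranging the absolutely convergent double series), so $\exp : \mathfrak{B} \to G$ is a group homomorphism and $\exp(\mathfrak{B})$ is a subgroup of $G$. Next, for $x \in \mathfrak{B}$ with $\|x - e\| < 1$, the series
$$\log(x) = -\sum_{n=1}^{\infty} \frac{(e-x)^n}{n}$$
converges in $\mathfrak{B}$, and the formal power-series identity $\exp(\log z) = z$ transfers to $\mathfrak{B}$ because the double series involved converge absolutely and the powers of $e - x$ all commute. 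Therefore a norm-ball around $e$ lies in $\exp(\mathfrak{B})$. Since $\exp(\mathfrak{B})$ is a subgroup, left-translating this neighborhood by any $\exp(a)$ produces a neighborhood of $\exp(a)$ inside $\exp(\mathfrak{B})$, so $\exp(\mathfrak{B})$ is open in $G$. Its complement in $G$ is then a union of cosets of $\exp(\mathfrak{B})$, each of which is open, so $\exp(\mathfrak{B})$ is also closed in $G$. Being a clopen subset of $G$ containing $e$, it must contain $G_0$, finishing the argument.

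The main obstacle is really just the bookkeeping in the two power-series identities $\exp(a+b) = \exp(a)\exp(b)$ and $\exp(\log(x)) = x$ in the Banach-algebra setting. Both are standard, and both crucially use that the elements being combined commute: commutativity of $\mathfrak{B}$ guarantees this for arbitrary $a,b$, while for $\exp(\log(x)) = x$ it is automatic because $\log(x)$ is itself a power series in $x$. Once these identities are in hand, the subgroup / clopen argument above is routine, and the lemma follows.
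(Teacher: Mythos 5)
Your proof is correct, and it is the standard argument for this classical fact. Note that the paper does not actually prove this lemma; it simply states it as a known result from Banach-algebra theory (it appears, for instance, in Douglas's and Zhu's books, both of which are in the bibliography). Your argument is the canonical one: show $\exp(\mathfrak{B}) \subseteq G_0$ via the path $t \mapsto \exp(ta)$, and show the reverse inclusion by proving $\exp(\mathfrak{B})$ is an open (hence clopen) subgroup of $G$ containing the identity, using commutativity for the homomorphism property $\exp(a+b) = \exp(a)\exp(b)$ and the logarithm series for local surjectivity near $e$. All the steps check out, including the observation that the cosets of an open subgroup are open so the subgroup is also closed, and that a clopen subset of $G$ containing $e$ must contain the connected component $G_0$.
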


From now on for convenience, we will denote the path-connected component in $\mathcal{I}$ containing 1 by $G_0$.  As we will see, being able to calculate $\widetilde{\widetilde{f}}$ for any $f\in H^\infty+C(\mathbb{T})$ is necessary.  The next lemma shows us how to do this.

\begin{lem}\label{DblHil}
For every $f\in L^2(\mathbb{T})$, $\widetilde{\widetilde{f}}=-f+a_f(0)$.

\end{lem}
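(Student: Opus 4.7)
The plan is to reduce the identity to a direct computation with Fourier coefficients, using the definition \eqref{hilDef} of the Hilbert transform twice.

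First, I would verify that $\widetilde{f}\in L^2(\mathbb{T})$ and identify its Fourier coefficients. Given $f\in L^2(\mathbb{T})$, the sequence $\{a_f(n)\}_{n\in\mathbb{Z}}$ is square-summable by Parseval's theorem, and since $|-i\sgn(n)|\le 1$ for every $n\in\mathbb{Z}$, the sequence $\{-i\sgn(n)a_f(n)\}_{n\in\mathbb{Z}}$ is also square-summable. By Parseval's theorem again, the defining series \eqref{hilDef} converges in $L^2(\mathbb{T})$ to a function in $L^2(\mathbb{T})$, so $\widetilde{f}\in L^2(\mathbb{T})$, and its $n$-th Fourier coefficient is $a_{\widetilde{f}}(n)=-i\sgn(n)a_f(n)$.

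Next, I would apply \eqref{hilDef} a second time, now to $\widetilde{f}$. This yields
\[
\widetilde{\widetilde{f}}(\theta)=\sum_{n\in\mathbb{Z}}(-i\sgn(n))a_{\widetilde{f}}(n)\chi_n(\theta)=\sum_{n\in\mathbb{Z}}(-i\sgn(n))^2 a_f(n)\chi_n(\theta)=-\sum_{n\in\mathbb{Z}}\sgn(n)^2 a_f(n)\chi_n(\theta),
\]
with convergence in $L^2(\mathbb{T})$. Since $\sgn(n)^2=1$ for $n\neq 0$ and $\sgn(0)^2=0$, this collapses to $-\sum_{n\neq 0}a_f(n)\chi_n(\theta)$.

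Finally, I would use the $L^2$ Fourier inversion formula $f=\sum_{n\in\mathbb{Z}}a_f(n)\chi_n$, which gives $-\sum_{n\neq 0}a_f(n)\chi_n=-f+a_f(0)\chi_0=-f+a_f(0)$, completing the proof. There is no real obstacle here; the only subtlety is to be careful that \eqref{hilDef} genuinely produces an $L^2$ function whose Fourier coefficients can be read off the defining series, which is handled at the outset by Parseval's theorem.
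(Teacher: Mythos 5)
Your proof is correct and follows essentially the same route as the paper's: both use Parseval's identity to verify $\widetilde{f}\in L^2(\mathbb{T})$, read off the Fourier coefficients of $\widetilde{f}$ from \eqref{hilDef}, apply \eqref{hilDef} a second time to compute $(-i\sgn(n))^2=-\sgn(n)^2$, and finish with the $L^2$ Fourier inversion formula. You are a bit more explicit than the paper about identifying $a_{\widetilde{f}}(n)$ before iterating, but the argument is the same.
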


\begin{proof}
Let $f\in L^2(\mathbb{T})$.  Then by Theorem 1.6.16 \cite[page 87]{krantz}, \newline$f=\sum_{n\in\mathbb{Z}} a_f(n)\chi_n$ [a.e] on $\mathbb{T}$.  Also from \eqref{hilDef} and Parseval's identity, $\|\widetilde{f}\|_2^2 = \sum_{n=-\infty}^\infty |-i\sgn(n)a_f(n)|^2 \leq \sum_{n=-\infty}^\infty |a_f(n)|^2=\|f\|_2^2$ where $\|\cdot\|_2$ is the norm on $L^2(\mathbb{T})$.  It follows that the Hilbert Transform is both bounded and linear on $L^2(\mathbb{T})$.  Hence \[\begin{split}\widetilde{\widetilde{f}}=\sum_{n\in\mathbb{Z}}(-i\sgn(n))^2a_f(n)\chi_n&=-\sum_{n\in\mathbb{Z}}(\sgn(n))^2a_f(n)\chi_n\\& =-\sum_{n\in\mathbb{Z}}a_f(n)\chi_n +a_f(0).\end{split}\]  Therefore $\widetilde{\widetilde{f}}=-f+a_f(0)$.
\end{proof}

An immediate consequence of Lemma \ref{DblHil} and \eqref{vmoReal} is \begin{equation}\label{doublevmo} \widetilde{VMO}\subseteq VMO \text{ and } \widetilde{VMO_\mathbb{R}}\subseteq VMO_\mathbb{R}.\end{equation}

The next lemma will be crucial in classifying $\mathcal{I}$.

\begin{lem}\label{I}
Let $\phi\in QC$.  Then $\phi\in \mathcal{I} \text{ if and only if } \phi, \overline{\phi}\in (H^\infty+C(\mathbb{T}))^{-1}$.
\end{lem}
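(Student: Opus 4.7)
The plan is to treat this as a direct bookkeeping argument from the definitions: $QC = [H^\infty + C(\mathbb{T})] \cap \overline{[H^\infty + C(\mathbb{T})]}$ and $\mathcal{I} = \{f \in QC : f^{-1} \in QC\}$, together with the earlier definition of $(H^\infty + C(\mathbb{T}))^{-1}$. The key (elementary) observation that ties everything together is that complex conjugation is compatible with taking pointwise multiplicative inverses, i.e.\ $\overline{f^{-1}} = (\overline{f})^{-1}$ whenever $f$ is invertible in $L^\infty(\mathbb{T})$, and that the inverse inside a unital subalgebra of $L^\infty(\mathbb{T})$ must coincide with the pointwise inverse.

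For the forward direction, I assume $\phi \in \mathcal{I}$, so both $\phi$ and $\phi^{-1}$ lie in $QC \subseteq H^\infty + C(\mathbb{T})$; this already yields $\phi \in (H^\infty+C(\mathbb{T}))^{-1}$. For $\overline{\phi}$, the fact $\phi \in QC$ provides $\overline{\phi} \in H^\infty + C(\mathbb{T})$, while $\phi^{-1} \in QC$ provides $\overline{\phi^{-1}} \in H^\infty + C(\mathbb{T})$; rewriting $\overline{\phi^{-1}} = (\overline{\phi})^{-1}$ shows $\overline{\phi} \in (H^\infty + C(\mathbb{T}))^{-1}$.

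For the reverse direction, I assume $\phi \in QC$ with both $\phi,\overline{\phi} \in (H^\infty+C(\mathbb{T}))^{-1}$, and aim to show $\phi^{-1} \in QC$. By the first hypothesis $\phi^{-1} \in H^\infty + C(\mathbb{T})$. By the second, $(\overline{\phi})^{-1} \in H^\infty + C(\mathbb{T})$; again using $(\overline{\phi})^{-1} = \overline{\phi^{-1}}$, this gives $\phi^{-1} \in \overline{H^\infty + C(\mathbb{T})}$. Combining the two memberships yields $\phi^{-1} \in QC$, hence $\phi \in \mathcal{I}$.

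There is no real obstacle here --- the statement is essentially a tautology once one identifies $QC$ as the self-conjugate part of $H^\infty + C(\mathbb{T})$. The only point that needs a sentence of justification is the identity $(\overline{\phi})^{-1} = \overline{\phi^{-1}}$, which follows because both sides are the pointwise reciprocal of $\overline{\phi}$. I expect the lemma to be short and its role to be preparatory: it is what lets one apply the machinery of $\mathrm{ind}$ from Section \ref{SectInd} (which is defined on $(H^\infty+C(\mathbb{T}))^{-1}$) to elements of $\mathcal{I}$ when proving Theorem \ref{classI}.
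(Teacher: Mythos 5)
Your proof is correct and follows essentially the same route as the paper: both directions come down to the self-adjointness of $QC$ together with the compatibility of complex conjugation with pointwise inversion. The paper phrases the argument with explicit inverse elements $g,h$ and the cancellation $\overline{\phi}\overline{g}=\overline{\phi}h \Rightarrow \overline{g}=h$, but the underlying reasoning is identical to yours.
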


\begin{proof}
$(\Rightarrow)$  Suppose $\phi\in \mathcal{I}$.  Then $\phi g=1$ for some $g\in QC$.  Thus $\overline{\phi}\overline{g}=1$.  Since $QC$ is self-adjoint $\overline{\phi},\overline{g}\in QC$.  Therefore since $QC\subseteq H^\infty+C(\mathbb{T})$ we have $\phi, \overline{\phi}\in (H^\infty+C(\mathbb{T}))^{-1}$.

$(\Leftarrow)$ Assume $\phi, \overline{\phi}\in (H^\infty+C(\mathbb{T}))^{-1}$.  Then for some functions $g,h\in H^\infty+C(\mathbb{T})$, $\phi g=\overline{\phi}h=1$.  This means $\overline{\phi}\overline{g}=\overline{\phi}h$.  Thus $\overline{g}=h$ and $g\in \left[H^\infty+C(\mathbb{T})\right]\bigcap \overline{\left[H^\infty+C(\mathbb{T})\right]}$.  So $g\in QC$.  Therefore $\phi\in \mathcal{I}$.
\end{proof}

From the lemma, we can see that to fully classify $\mathcal{I}$, we must first classify $(H^\infty+C(\mathbb{T}))^{-1}$.  To do this, we will need Lemma \ref{inHardy} and the following Theorem from \cite{sarason}.

\begin{thm}\label{uniModInv}
Let $n\in\mathbb{Z}$ and $w$ be a unimodular function in $(H^\infty+C(\mathbb{T}))^{-1}$ satisfying $\ind(w)=n$.  Then for some $u,v\in C_{\mathbb{R}}(\mathbb{T})$, $w=\chi_n\exp(i(u+\mathcal{C}(v)))$.
\end{thm}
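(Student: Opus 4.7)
My plan is to reduce to the index-zero case, produce a real-valued $VMO$ branch of the argument of the reduced function, and then translate that branch into the form $u+\mathcal{C}(v)$ using the structure results already established. The hardest part will be the existence of the $VMO_\mathbb{R}$ logarithm.

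First I would form $w_0 := \chi_{-n}w$. A direct Poisson-integral computation gives $\ind(\chi_{-n}) = -n$, so by Lemma~\ref{indSum}, $\ind(w_0) = -n + n = 0$. The function $w_0$ is unimodular, and since $w, \overline{w} \in H^\infty+C(\mathbb{T})$ and $\chi_{\pm n} \in C(\mathbb{T}) \subset H^\infty + C(\mathbb{T})$, both $w_0$ and $\overline{w_0}$ lie in $H^\infty + C(\mathbb{T})$; Lemma~\ref{I} then yields $w_0 \in \mathcal{I} \subset QC$. It therefore suffices to produce $u,v\in C_\mathbb{R}(\mathbb{T})$ with $w_0 = \exp(i(u+\mathcal{C}(v)))$, since multiplying by $\chi_n$ recovers the original claim.

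The main step is to exhibit $\phi \in VMO_\mathbb{R}$ such that $w_0 = e^{i\phi}$ almost everywhere. Because $\ind(w_0)=0$, the Poisson extension $\widehat{w_0}$ has winding number zero about the origin along every circle of radius $r$ with $r_0(w_0) < r < 1$, and by the $(H^\infty+C)^{-1}$-inversion results of \cite{sarason} the values $\widehat{w_0}(re^{i\theta})$ stay bounded away from zero on such circles. Hence an analytic branch of $\log \widehat{w_0}$ is well-defined on the annulus $\{r_0 < |z| < 1\}$, with continuous real-valued imaginary part $\theta \mapsto \arg \widehat{w_0}(re^{i\theta})$. Since $|w_0|=1$ a.e., the real part $\log|\widehat{w_0}(re^{i\theta})|$ tends to $0$ as $r\nearrow 1$; a careful boundary analysis using $w_0\in QC=VMO\cap L^\infty$ shows that the arguments are uniformly bounded in $BMO$ and converge in a suitable sense to some $\phi \in VMO_\mathbb{R}$ satisfying $e^{i\phi}=w_0$ a.e. This $VMO$ boundary analysis of the logarithm of the harmonic extension of a unimodular $QC$ function is the central obstacle and is where the deeper theory from \cite{sarason} enters.

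Once $\phi \in VMO_\mathbb{R}$ is in hand, the remainder is bookkeeping. By \eqref{vmoReal}, $\phi = u + \widetilde{v}$ for some $u,v \in C_\mathbb{R}(\mathbb{T})$, and by \eqref{conjHil}, $\widetilde{v} = -\mathcal{C}(v)$ a.e. Thus $\phi = u + \mathcal{C}(-v)$; relabeling $v' := -v \in C_\mathbb{R}(\mathbb{T})$ gives $w_0 = \exp(i(u+\mathcal{C}(v')))$, and multiplying by $\chi_n$ yields $w = \chi_n\exp(i(u+\mathcal{C}(v')))$, as required.
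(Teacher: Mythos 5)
The paper does not prove this theorem: it is introduced with the phrase \enquote{the following Theorem from \cite{sarason}} and then used as a black box in Proposition~\ref{saras}, so there is no paper proof to compare against. The outer layers of your proposal are fine --- reducing to the index-zero case via $w_0=\chi_{-n}w$ and Lemma~\ref{indSum}, noting that $w_0\in(H^\infty+C(\mathbb{T}))^{-1}$ and $\overline{w_0}=w_0^{-1}\in H^\infty+C(\mathbb{T})$ force $w_0\in QC$, and at the end converting a $VMO_\mathbb{R}$ argument into the form $u+\mathcal{C}(v)$ using \eqref{vmoReal} and \eqref{conjHil}.

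The genuine gap is the step you yourself flag as the central obstacle: producing $\phi\in VMO_\mathbb{R}$ with $e^{i\phi}=w_0$ almost everywhere. You sketch a plan --- take a continuous branch of $\log\widehat{w_0}$ on a thin annulus where $\widehat{w_0}$ is nonvanishing with winding number zero, let $r\nearrow 1$, argue the arguments $\theta\mapsto\arg\widehat{w_0}(re^{i\theta})$ are uniformly bounded in $BMO$ and converge to a $VMO_\mathbb{R}$ boundary function --- but none of these assertions is verified, and the appeal to \enquote{the deeper theory from \cite{sarason}} to supply them is circular: once the index-zero reduction is made, the existence of such a $\phi$ \emph{is} the content of Sarason's theorem, so it cannot be invoked to fill the hole. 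Establishing the uniform $BMO$ bound, the mode of convergence, and the identity $e^{i\phi}=w_0$ a.e.\ is where all the work lies, and it is omitted; as written the proposal is a reduction plus a citation, not a proof. A minor side point: $\widehat{w_0}$ is harmonic, not analytic, so there is no \enquote{analytic branch of $\log\widehat{w_0}$}; what you want is a continuous branch, which exists on the annulus precisely because $\widehat{w_0}$ is nonvanishing there and winds zero times about the origin.
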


\begin{lem}\label{inHardy}
Let $w\in L_{\mathbb{R}}^\infty$.  Then $\exp(w-i\widetilde{w})$ is invertible in $H^\infty$.
\end{lem}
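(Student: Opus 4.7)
The plan is to realize $\exp(w-i\widetilde{w})$ as the boundary function of a bounded analytic function on $\mathbb{D}$ whose reciprocal is also bounded and analytic. The natural candidate is $F = \widehat{w} + i\,\mathcal{C}(w)$, since $\widehat{w}$ is the harmonic extension of $w$ and $\mathcal{C}(w)$ is (modulo a sign) its harmonic conjugate.

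First I would observe that combining \eqref{poiDef} and \eqref{conjDef} gives
\[
F(z) = \widehat{w}(z) + i\,\mathcal{C}(w)(z) = \frac{1}{2\pi}\int_0^{2\pi} w(t)\,\frac{e^{it}+z}{e^{it}-z}\,dt,
\]
which is manifestly analytic in $z\in\mathbb{D}$ because the Cauchy-type kernel $(e^{it}+z)/(e^{it}-z)$ is. Since $w$ is real-valued and the kernels $\operatorname{Re}[(e^{it}+z)/(e^{it}-z)]$ and $\operatorname{Im}[(e^{it}+z)/(e^{it}-z)]$ are real-valued for $z\in\mathbb{D}$, the functions $\widehat{w}$ and $\mathcal{C}(w)$ are real-valued on $\mathbb{D}$, so $\operatorname{Re}(F)=\widehat{w}$.

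Next I would bound $|\exp(F)|$. Since $w\in L_\mathbb{R}^\infty$, the Poisson integral satisfies $\|\widehat{w}\|_\infty \le \|w\|_\infty$, so
\[
|\exp(F(z))| = \exp(\widehat{w}(z)) \le \exp(\|w\|_\infty), \qquad z\in\mathbb{D},
\]
and identically $|\exp(-F(z))| \le \exp(\|w\|_\infty)$. Hence $\exp(F)$ and $\exp(-F)$ are both bounded analytic functions on $\mathbb{D}$, i.e.\ elements of $H^\infty$.

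Finally I would identify the boundary values. The Poisson integral $\widehat{w}$ has radial limits equal to $w$ a.e.\ on $\mathbb{T}$, and by \eqref{conjDef} together with \eqref{conjHil} the radial limits of $\mathcal{C}(w)$ equal $-\widetilde{w}$ a.e.\ on $\mathbb{T}$. Continuity of $\exp$ then gives that $\exp(F)$ has boundary values $\exp(w-i\widetilde{w})$ a.e., and $\exp(-F)$ has boundary values $\exp(-(w-i\widetilde{w}))$ a.e. Since these two $H^\infty$ elements multiply to $1$ in $L^\infty$, $\exp(w-i\widetilde{w})$ is invertible in $H^\infty$. The only point requiring any care is the identification of radial boundary values, but this is handled cleanly by \eqref{conjHil} and the standard fact that Poisson integrals of $L^\infty$ functions recover their boundary values a.e.; no real obstacle arises.
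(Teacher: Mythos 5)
Your proposal is correct and rests on the same construction as the paper: both realize $\exp(w-i\widetilde{w})$ as the boundary function of the analytic function $\exp\bigl(\widehat{w}+i\,\mathcal{C}(w)\bigr)$ on $\mathbb{D}$, and both use \eqref{conjDef} and \eqref{conjHil} to identify the radial limits with $\exp(w-i\widetilde{w})$. The one place you diverge is in how $H^\infty$ membership is established. The paper first shows the boundary function $G^*$ lies in $L^\infty$, then invokes $G=\widehat{G^*}$ and reads off from the Poisson-series expansion $G(re^{i\theta})=\sum_n a_{G^*}(n)r^{|n|}\chi_n$ that analyticity forces $a_{G^*}(n)=0$ for $n<0$, hence $G^*\in H^\infty$. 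You instead bound $|\exp(F(z))|=\exp(\widehat{w}(z))\le e^{\|w\|_\infty}$ directly on the disk, so $\exp(F)$ and $\exp(-F)$ are bounded analytic functions on $\mathbb{D}$ and therefore automatically in $H^\infty$; the boundary identification then finishes the job. Your route is a bit more economical, avoiding the Fourier-coefficient step entirely, at the cost of invoking (implicitly) the standard identification of $H^\infty(\mathbb{D})$ with $H^\infty(\mathbb{T})$ via nontangential boundary values. Both arguments are sound; the difference is cosmetic rather than substantive.
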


\begin{proof}
Let $G:\mathbb{D}\rightarrow\mathbb{C}$ be defined by \[G(re^{it})=\exp\left(\widehat{w}(re^{it})+i\mathcal{C}\left(w\right)(re^{it})\right). \]  Then from Theorem 11.32 \cite[page 249]{Rud} and Theorem 17.16 \cite[page 343]{Rud}, $G$ is analytic on $\mathbb{D}$ and $\lim_{r\nearrow 1}G(re^{it})$ exists for almost every $e^{it}\in\mathbb{T}$.  Let $G^*:\mathbb{T}\rightarrow \mathbb{C}$ be defined by
\[G^*(e^{it})=\left\{
\begin{array}{cc}
\lim_{r\nearrow 1}G(re^{it})& \mbox{ if } \lim_{r\nearrow 1}G(re^{it}) \mbox{ exists}\\
0  & \mbox{otherwise}
\end{array}\right.\]  Then by Theorem 11.23 \cite[page 244]{Rud}, \eqref{conjDef} and \eqref{conjHil} above \[G^*=\exp(w-i\widetilde{w}) \text{ almost everywhere on }\mathbb{T}.\]  It follows that $G^*\in L^\infty(\mathbb{T})$.  Moreover by the proof of Theorem 11.32 \cite[page 249]{Rud}, $G=\widehat{G^*}$.  Thus by Lemma 6.43 \cite[page 150-151]{Doug} \[G(re^{i\theta})=\sum_{n\in\mathbb{Z}}^\infty a_{G^*}(n)r^{|n|}\chi_n \text{ for all }z\in\mathbb{D}.\]  Since $G$ is analytic on $\mathbb{D}$, we must have $a_{G^*}(n)=0$ for all $n<0$.  Hence $G^*\in H^\infty$.  Since $w\in L_{\mathbb{R}}^\infty$ was arbitrary, the above argument also implies $\exp(-w+i\widetilde{w})\in H^\infty$.  Therefore $\exp(w-i\widetilde{w})$ is invertible in $H^\infty$.\end{proof}

Now we will classify $(H^\infty+C(\mathbb{T}))^{-1}$.
\begin{prop}\label{saras}
Let $f\in L^\infty(\mathbb{T})$.  Then \[f\in (H^\infty+C(\mathbb{T}))^{-1} \text{ if and only if } f=\chi_n\exp(ig)\exp(w-i\widetilde{w})\] for some $n\in\mathbb{Z}$, $w\in L_\mathbb{R}^\infty$ and $g\in VMO_{\mathbb{R}}$.
\end{prop}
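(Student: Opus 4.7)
The plan is to prove both implications using results already at hand: the forward direction by factoring off $|f|$ via an outer function and applying Theorem \ref{uniModInv} to what remains, and the reverse direction by verifying that each of the three factors in the product lies in $(H^\infty+C(\mathbb{T}))^{-1}$.

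For the forward direction, I would first observe that $f, f^{-1} \in L^\infty(\mathbb{T})$ forces $|f|$ to be bounded above and bounded away from zero almost everywhere, so $w := \log|f|$ defines an element of $L_\mathbb{R}^\infty$. By Lemma \ref{inHardy} the function $\exp(w - i\widetilde{w})$ is invertible in $H^\infty$, and since both $w$ and $\widetilde{w}$ are real-valued, $|\exp(w - i\widetilde{w})| = e^w = |f|$ almost everywhere on $\mathbb{T}$. Hence $\tilde f := f \exp(-w + i\widetilde{w})$ is unimodular and still invertible in $H^\infty+C(\mathbb{T})$. Setting $n := \ind(\tilde f)$ and applying Theorem \ref{uniModInv} yields $u, v \in C_\mathbb{R}(\mathbb{T})$ with $\tilde f = \chi_n \exp(i(u + \mathcal{C}(v)))$. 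Using \eqref{conjHil} to rewrite $\mathcal{C}(v)$ as $-\widetilde{v}$ and \eqref{vmoReal} to identify $g := u - \widetilde{v}$ as an element of $VMO_\mathbb{R}$, I rearrange to obtain $f = \chi_n \exp(ig) \exp(w - i\widetilde{w})$.

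For the reverse direction, it suffices to show each factor lies in $(H^\infty+C(\mathbb{T}))^{-1}$. Clearly $\chi_n \in C(\mathbb{T})^{-1}$ and Lemma \ref{inHardy} gives $\exp(w - i\widetilde{w}) \in (H^\infty)^{-1}$. For $\exp(ig)$ I would use \eqref{vmoReal} to write $g = u + \widetilde{v}$ with $u, v \in C_\mathbb{R}(\mathbb{T})$; then $\exp(iu) \in C(\mathbb{T})^{-1}$, and applying Lemma \ref{inHardy} with its real-valued input taken to be $-v$ yields $\exp(-v + i\widetilde{v}) \in (H^\infty)^{-1}$, so $\exp(i\widetilde{v}) = \exp(v)\exp(-v + i\widetilde{v})$ lies in $(H^\infty+C(\mathbb{T}))^{-1}$. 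I expect the main obstacle to be exactly this manipulation: Lemma \ref{inHardy} only produces Hardy-space functions of the restricted form $\exp(w - i\widetilde{w})$, so one must introduce and cancel the continuous factor $\exp(v)$ to bring $\exp(i\widetilde v)$ into that shape. The rest of the reverse direction, and essentially all of the forward direction, is then bookkeeping.
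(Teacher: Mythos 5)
Your argument is correct and follows essentially the same route as the paper: forward direction by factoring out the outer function $\exp(w-i\widetilde w)$ with $w=\log|f|$ and applying Theorem \ref{uniModInv} together with \eqref{conjHil} and \eqref{vmoReal} to the remaining unimodular factor; reverse direction by decomposing $g$ via \eqref{vmoReal} and inserting a bounded factor $\exp(\pm v)$ so that Lemma \ref{inHardy} applies. The only cosmetic difference is that you justify $\log|f|\in L_\mathbb{R}^\infty$ by the elementary boundedness-above-and-below argument, whereas the paper invokes the continuous functional calculus; both are fine.
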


\begin{proof}
Let $g\in VMO_\mathbb{R}$, $n\in\mathbb{Z}$ and $w\in L_\mathbb{R}^\infty$.  Then by \eqref{vmoReal} $g=h-\widetilde{u}$ for some $u,h\in C_\mathbb{R}(\mathbb{T})$.  Thus $\exp(ig)=\exp(ih)\exp(u-i\widetilde{u})\exp(-u)$  Then from Lemma \ref{inHardy}, $\exp(w-i\widetilde{w})\exp(ih)\exp(u-i\widetilde{u})\exp(-u)\in (H^\infty+C(\mathbb{T}))^{-1}$.  Hence $\chi_n\exp(ig)\exp(w-i\widetilde{w})\in (H^\infty+C(\mathbb{T}))^{-1}$.

Assume $f\in (H^\infty+C(\mathbb{T}))^{-1}$.  It follows from the continuous functional calculus \cite[page 62]{zhuAl} that $\ln|f|\in L_\mathbb{R}^\infty$ and thus by Lemma \ref{inHardy} \newline$\exp\left[\ln|f|-i\left(\widetilde{\ln|f|}\right)\right]\in (H^\infty+\mathcal{C}(\mathbb{T}))^{-1}$.  It is also true that \\ $\left|\exp\left[\ln|f|-i\left(\widetilde{\ln|f|}\right)\right]\right|=|f|$.  Hence by Theorem \ref{uniModInv}, \eqref{conjHil} and \eqref{vmoReal}, $f\exp\left[-\ln|f|+i\left(\widetilde{\ln|f|}\right)\right]=\chi_n\exp(ig)$ for some $n\in\mathbb{Z}$ and $g\in VMO_{\mathbb{R}}$.  Therefore $f=\chi_n\exp(w-i\widetilde{w})\exp(ig)$ for some $n\in\mathbb{Z}$, $w\in L_\mathbb{R}^\infty$ and $g\in VMO_{\mathbb{R}}$.
\end{proof}

We will now prove that the only integer valued functions in $VMO$ are the constant functions.  Recall that for any measurable function $g:\mathbb{T}\rightarrow\mathbb{C}$, the essential range of $g$ is the set $R_g$ defined by \[R_g=\{\lambda\in\mathbb{C}:\left|\{e^{it}\in\mathbb{T}:|g(e^{it})-\lambda|<\epsilon\}\right|>0 \text{ for every }\epsilon>0\}.\]

\begin{lem}\label{essRan}
If $g\in VMO_\mathbb{R}$, then $R_g$ is a connected subset of $\mathbb{R}$.
\end{lem}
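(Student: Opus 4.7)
The plan is to argue by contradiction. Suppose that $R_g$ is disconnected. Since $R_g$ is a closed subset of $\mathbb{R}$, there exist $c\in\mathbb{R}$ and $\epsilon>0$ such that $(c-\epsilon,c+\epsilon)\cap R_g=\emptyset$ while both
\[
A=\{e^{it}\in\mathbb{T}:g(e^{it})\leq c-\epsilon\} \quad\text{and}\quad B=\{e^{it}\in\mathbb{T}:g(e^{it})\geq c+\epsilon\}
\]
have positive Lebesgue measure on $\mathbb{T}$. My goal is to produce a sequence of subarcs $I_n$ of $\mathbb{T}$ with $|I_n|\to 0$ on which the mean-square oscillation of $g$ is bounded below by a positive constant, which contradicts the vanishing mean oscillation condition \eqref{VMOdef}.

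The heart of the argument is an intermediate-value construction. For each $r\in(0,\pi)$ and $x\in\mathbb{T}$, let $I_{x,r}$ denote the subarc of $\mathbb{T}$ of length $2r$ centered at $x$, and set $f_r(x)=|A\cap I_{x,r}|/(2r)$. A routine dominated-convergence argument shows that $x\mapsto f_r(x)$ is continuous for each fixed $r$. By the Lebesgue density theorem, I can select points $x_A\in A$ and $x_B\in B$ at which the Lebesgue density of $A$ is $1$ and $0$ respectively, so that $f_r(x_A)\to 1$ and $f_r(x_B)\to 0$ as $r\to 0^+$. For all sufficiently small $r$ one then has $f_r(x_A)>1/2>f_r(x_B)$, and applying the intermediate value theorem along one of the two arcs joining $x_A$ to $x_B$ yields a point $y_r\in\mathbb{T}$ with $f_r(y_r)=1/2$. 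Setting $I_r:=I_{y_r,r}$, and using that $|(A\cup B)^c|=0$ since the essential range of $g$ misses $(c-\epsilon,c+\epsilon)$, I obtain $|A\cap I_r|=|B\cap I_r|=|I_r|/2$.

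The remaining step is a short calculation: on $I_r$, whichever side of $c$ the mean $g_{I_r}$ lies on, $g$ differs from $g_{I_r}$ by at least $\epsilon$ on a set of measure $|I_r|/2$ (either $A\cap I_r$ or $B\cap I_r$). This yields
\[
\frac{1}{|I_r|}\int_{I_r}|g-g_{I_r}|^2\,d\theta\;\geq\;\frac{\epsilon^2}{2}
\]
for all sufficiently small $r$, contradicting $g\in VMO$ via \eqref{VMOdef} as $r\to 0^+$.

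The main obstacle I anticipate is executing the intermediate-value step cleanly, namely verifying continuity of $x\mapsto f_r(x)$ and using Lebesgue density to supply the two extremal points $x_A$ and $x_B$. These are standard measure-theoretic facts, but some care is required to connect them to the connectedness of $\mathbb{T}$ in an honest intermediate-value argument. Once that is done, the oscillation estimate is essentially immediate from the definitions of $A$ and $B$.
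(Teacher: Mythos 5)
Your proof is correct, and it takes a genuinely different route from the paper's. Both proofs reduce to the same contradiction: after fixing a gap $(c-\epsilon,c+\epsilon)$ in $R_g$ and setting $A=\{g\le c-\epsilon\}$, $B=\{g\ge c+\epsilon\}$, one must produce arbitrarily short arcs $I$ on which both $|A\cap I|/|I|$ and $|B\cap I|/|I|$ stay bounded away from $0$, so that the mean-square oscillation of $g$ on $I$ is bounded below by a multiple of $\epsilon^2$. The paper obtains such arcs indirectly: it computes that the $L^2$-oscillation of the indicator $\omega_A$ over $I$ equals $|I\cap A||I\cap B|/|I|^2$, invokes Proposition 5.3 of Xia to conclude $\omega_A\notin VMO$ (so this ratio does not vanish along every sequence $|I|\to 0$), and then compares this to the oscillation of $g$ via the double-integral identity $\frac{1}{|I|}\int_I|g-g_I|^2=\frac{1}{2|I|^2}\int_I\int_I|g(t)-g(x)|^2\,dx\,dt$. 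You instead construct the bad arcs explicitly by a Lebesgue-density-plus-intermediate-value argument, choosing centers $y_r$ where $|A\cap I_{y_r,r}|=|I_{y_r,r}|/2$, and then the oscillation lower bound $\epsilon^2/2$ follows by splitting on which side of $c$ the mean $g_{I_r}$ falls. Your route avoids the external citation and is self-contained and somewhat more elementary; the paper's route is shorter on the page and reuses a published characterization of indicator functions in $VMO$, at the cost of opacity about exactly which arcs witness the failure of vanishing oscillation.

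One small presentational remark: in your first paragraph you should note (as you implicitly do in the measure-zero claim) that $R_g$ being closed together with $R_g$ disconnected genuinely forces an open gap $(c-\epsilon,c+\epsilon)$ strictly between two values $a<b$ of $R_g$, and that $a,b\in R_g$ is exactly what makes $|A|>0$ and $|B|>0$. You state the conclusion correctly but it is worth spelling out, since $R_g$ need not be bounded for $g\in VMO_\mathbb{R}\setminus L^\infty_\mathbb{R}$.
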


\begin{proof}
From the definition of $R_g$, we can see that $g(\mathbb{T})\subseteq \mathbb{R}$ implies $R_g\subseteq \mathbb{R}$.  Thus it suffices to show if $a,b\in R_g$ with $a<b$ and $a<c<b$, then $c\in R_g$.

Assume $c\notin R_g$.  Then for some $\gamma>0$, $|g^{-1}(c-\gamma,c+\gamma)|=0$.  Choose $0<\epsilon\leq \gamma$ so that $a< c-\epsilon$ and $c+\epsilon<b$.  Let $A=g^{-1}\left((-\infty,c-\epsilon]\right)$ and $B=g^{-1}\left([c+\epsilon,\infty)\right)$.  Clearly $B\subseteq \mathbb{T}\setminus A$.  Then $a,b\in R_g$ implies $|A|>0$ and $|\mathbb{T}\setminus A|>0$.  Let $\omega_A:\mathbb{T}\rightarrow\{0,1\}$ be defined by

\[\omega_A(x)=
\left\{
  \begin{array}{cc}
   1 & \mbox{ if } x\in A\\
   0 & \mbox{if } x\notin A\\
\end{array}
\right.\]

and let $I$ be any subarc of $\mathbb{T}$.  Since $\frac{1}{|I|}\int_{I}\omega_A=\frac{|I\cap A|}{|I|}$, we have \[\begin{split}&\frac{1}{|I|}\int_{I}\left|\omega_A-\frac{|I\cap A|}{|I|}\right|^2=\\&=\frac{1}{|I|}\int_{I\cap A}\left|\omega_A-\frac{|I\cap A|}{|I|}\right|^2+\frac{1}{|I|}\int_{I\cap (\mathbb{T}\setminus A)}\left|\omega_A-\frac{|I\cap A|}{|I|}\right|^2\\&=\frac{1}{|I|}\int_{I\cap A}\left|1-\frac{|I\cap A|}{|I|}\right|^2+\frac{1}{|I|}\int_{I\cap (\mathbb{T}\setminus A)}\left|0-\frac{|I\cap A|}{|I|}\right|^2\\&=\frac{|I\cap A||I\cap (\mathbb{T}\setminus A)|\left(|I\cap A|+|I\cap (\mathbb{T}\setminus A)|\right)}{|I|^3}\\&=\frac{|I\cap A||I\cap (\mathbb{T}\setminus A)|}{|I|^2}\\&=\frac{|I\cap A||I\cap B|}{|I|^2}\end{split}\] where the latter equality follows from the fact that $|\mathbb{T}\setminus A|=|B|+|g^{-1}\left((c-\epsilon,c+\epsilon)\right)|=|B|$.  By proposition 5.3 \cite[page 461]{XiaVmo}, \\$\omega_A\notin VMO_\mathbb{R}$.  So for some subarc $J$ of $\mathbb{T}$, $$\lim_{|J|\rightarrow0}\frac{|J\cap A||J\cap B|}{|J|^2}\neq0.$$

On the other hand direct calculation yields \[\frac{1}{|I|}\int_I|g-g_I|^2=\frac{1}{2|I|^2}\int_I\int_I|g(t)-g(x)|^2dxdt\] for any subarc $I$ of $\mathbb{T}$ \cite[page 276]{zhuOp}.  Thus \[\begin{split}\lim_{|I|\rightarrow0}\frac{1}{|I|}\int_I|g-g_I|^2&\geq \lim_{|I|\rightarrow0}\frac{1}{2|I|^2}\int_{I\cap A}\int_{I\cap B}|g(t)-g(x)|^2dxdt\\&\geq \lim_{|I|\rightarrow0}\frac{(2\epsilon)^2|I\cap A||I\cap B|}{2|I|^2}.\end{split}\]  So since $g\in VMO_\mathbb{R}$, $\lim_{|J|\rightarrow0}\frac{4\epsilon^2|J\cap A||J\cap B|}{2|J|^2}=0$ and which implies $$\lim_{|J|\rightarrow0}\frac{|J\cap A||J\cap B|}{|J|^2}=0.$$  This is a contradiction by the above statements.  Therefore $c\in R_g$ and $R_g$ is connected.

\end{proof}
Hence we get the following corollary.

\begin{cor}\label{Z}
If $f\in VMO_{\mathbb{R}}$ and integer valued, then $f$ is constant.
\end{cor}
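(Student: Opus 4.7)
The plan is to deduce this corollary directly from Lemma \ref{essRan} together with the observation that $\mathbb{Z}$, viewed as a subspace of $\mathbb{R}$, is totally disconnected.

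First I would verify that the essential range $R_f$ is contained in $\mathbb{Z}$. Since $f$ is integer-valued almost everywhere, any $\lambda \in R_f$ must lie in the closure of $\mathbb{Z}$; because $\mathbb{Z}$ is closed in $\mathbb{R}$, this gives $R_f \subseteq \mathbb{Z}$. Concretely, if $\lambda \notin \mathbb{Z}$, choose $\epsilon > 0$ small enough that the open interval $(\lambda-\epsilon, \lambda+\epsilon)$ contains no integer; then $\{e^{it} \in \mathbb{T} : |f(e^{it}) - \lambda| < \epsilon\}$ has measure zero (up to the null set where $f$ fails to be integer-valued), contradicting $\lambda \in R_f$.

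Now by Lemma \ref{essRan} the set $R_f$ is a connected subset of $\mathbb{R}$, so the inclusion $R_f \subseteq \mathbb{Z}$ forces $R_f$ to be a connected subset of $\mathbb{Z}$. The only connected subsets of $\mathbb{Z}$ are the empty set and singletons; since $f \in L^\infty \subseteq L^1$ is a genuine function on $\mathbb{T}$, its essential range is nonempty, so $R_f = \{n\}$ for some $n \in \mathbb{Z}$. Standard properties of the essential range then give $f = n$ almost everywhere on $\mathbb{T}$, so $f$ is constant as an element of $VMO_\mathbb{R} \subseteq L^2(\mathbb{T})$.

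There is no real obstacle here; the content of the statement is entirely in Lemma \ref{essRan}, and once that is in hand the corollary is a one-line deduction from the total disconnectedness of $\mathbb{Z}$. The only point requiring a moment of care is the passage from ``integer-valued'' (a pointwise condition that holds almost everywhere) to $R_f \subseteq \mathbb{Z}$, which uses that $\mathbb{Z}$ is closed in $\mathbb{R}$.
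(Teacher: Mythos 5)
Your argument is correct and is exactly what the paper intends: the corollary is stated immediately after Lemma \ref{essRan} with only the phrase "Hence we get the following corollary," and the implied proof is precisely that $R_f$ is connected, lies in $\mathbb{Z}$, and must therefore be a singleton. You have simply made the two small checks ($R_f\subseteq\mathbb{Z}$ from closedness of $\mathbb{Z}$, and nonemptiness of $R_f$) explicit.
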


\subsection{Proof of Theorem \ref{classI}}

\begin{proof}[Proof of Theorem \ref{classI}]

By Lemma \ref{exp}, Lemma \ref{I} and Proposition \ref{saras}, \[\{\chi_n\}_n\exp\left(QC_\mathbb{R}\right)\exp\left(iVMO_{\mathbb{R}}\right)\subseteq \mathcal{I}.\]

Let $\phi\in\mathcal{I}$.  Then by Lemma \ref{I}, $\phi,\overline{\phi}\in (H^\infty+C(\mathbb{T}))^{-1}$.  So by Proposition \ref{saras}, \[\phi=\chi_n\exp(w-i\widetilde{w})\exp(ig)\] for some $n\in\mathbb{Z}$, $w\in L_\mathbb{R}^\infty$ and $g\in VMO_{\mathbb{R}}$.  This yields \[\exp(2w)=\phi\overline{\phi}\in \mathcal{I}.\]  This implies $0\notin \sigma\left(\exp(2w)\right)$ where $\sigma\left(\exp(2w)\right)$ is the spectrum of $\exp(2w)$ as an element of $QC$.  Then by Corollary 9.5 \cite[page 57]{zhuAl} and the fact that $\sigma\left(\exp(2w)\right)$ is compact, $\sigma\left(\exp(2w)\right)\subseteq [c,\|\exp(2w)\|_\infty]$ for some $c>0$.  Then by the continuous functional calculus \cite[page 62]{zhuAl}, $\frac{1}{2}\ln(\exp(2w))\in QC$.  It follows that $w\in QC_\mathbb{R}$.  Then by \eqref{quasi} and \eqref{doublevmo}, $\widetilde{w}\in VMO_{\mathbb{R}}$.  This means \[\phi=\chi_n\exp(w)\exp(ih)\] where $h=\widetilde{w}+g$.  Hence \[\mathcal{I}\subseteq \{\chi_n\}_n\exp\left(QC_\mathbb{R}\right)\exp\left(iVMO_{\mathbb{R}}\right).\]  Therefore Theorem \ref{classI} holds.
\end{proof}

\subsection{Corollary to Theorem \ref{classI}}

For any $k\in\mathbb{Z}$, let \[I_k=\{\phi\in \mathcal{I}:\ind(\phi)=k\}.\]  Then by Theorem \ref{classI} and the next lemma, we can completely classify $I_k$.

\begin{lem}\label{ind}
For any $g\in VMO_{\mathbb{R}}$, $w\in L_\mathbb{R}^\infty$ and $n\in\mathbb{Z}$, \[\ind(\chi_n)=n, \ \ind(\exp(ig))=0\text{ and } \ind(\exp(w-i\widetilde{w}))=0.\]

\end{lem}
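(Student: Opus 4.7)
My plan is to verify the three claims in sequence, using the winding-number description of $\ind$ together with Lemma \ref{inHardy} and the decomposition \eqref{vmoReal}. Throughout I rely on the standard fact that for $f \in C(\mathbb{T})$ invertible, $\widehat{f}$ extends continuously to $\overline{\mathbb{D}}$, so $\ind(f)$ agrees with the classical winding number of the curve $f$ about $0$. The first claim is then a direct computation: the Poisson extension satisfies $\widehat{\chi_n}(re^{i\theta}) = r^{|n|}e^{in\theta}$ (immediate from \eqref{poiDef} via the Fourier expansion), and this curve traces a circle centered at $0$ and winds $n$ times, giving $\ind(\chi_n) = n$. For the second claim, Lemma \ref{inHardy} places $\exp(w - i\widetilde{w})$ in $H^\infty$ invertibly, so its Poisson extension $G$ is analytic and zero-free on the simply connected disk $\mathbb{D}$ and admits an analytic logarithm $L$; then $\theta \mapsto L(re^{i\theta})$ is a continuous $2\pi$-periodic lift of $\theta \mapsto G(re^{i\theta})$ through $\exp$, which forces the winding number about $0$ to vanish.

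The third claim is the substantive one. Using \eqref{vmoReal}, I would write $g = p + \widetilde{q}$ with $p, q \in C_\mathbb{R}(\mathbb{T})$, so that $\exp(ig) = \exp(ip) \cdot \exp(i\widetilde{q})$ with both factors in $(H^\infty + C(\mathbb{T}))^{-1}$ (the first trivially, the second via Theorem \ref{classI}). Lemma \ref{indSum} then splits $\ind(\exp(ig)) = \ind(\exp(ip)) + \ind(\exp(i\widetilde{q}))$. The continuous unimodular function $\exp(ip)$ admits the continuous periodic real lift $p$, so its winding number, and hence its index, is $0$. For the Hilbert-transform factor, applying Lemma \ref{inHardy} to $-q \in L_\mathbb{R}^\infty$ gives $\exp(-q + i\widetilde{q}) \in H^\infty$ invertible, with index $0$ by the second claim. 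Factoring $\exp(i\widetilde{q}) = \exp(q) \cdot \exp(-q + i\widetilde{q})$ and noting that $\exp(q) \in C_\mathbb{R}(\mathbb{T})$ is strictly positive (hence has winding number $0$ about $0$), a second application of Lemma \ref{indSum} yields $\ind(\exp(i\widetilde{q})) = 0$, and combining the two summands gives $\ind(\exp(ig)) = 0$.

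The only genuine obstacle lies in the third claim. Because $g \in VMO_\mathbb{R}$ can be essentially unbounded, the naive homotopy $t \mapsto \exp(itg)$ from $1$ to $\exp(ig)$ is not continuous in $L^\infty$, so continuity of $\ind$ does not conclude the matter for free. What makes the argument work is that the splitting $VMO_\mathbb{R} = C_\mathbb{R}(\mathbb{T}) + \widetilde{C_\mathbb{R}(\mathbb{T})}$ concentrates any unboundedness of $g$ in the Hilbert-transform piece $\widetilde{q}$, and pairing $\widetilde{q}$ with the bounded piece $-q$ produces an $H^\infty$ factor that Lemma \ref{inHardy} is purpose-built to handle.
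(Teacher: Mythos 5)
Your proof is correct, and at the level of overall architecture it matches the paper: both decompose $g$ via \eqref{vmoReal}, both lean on Lemma~\ref{inHardy} to produce an invertible $H^\infty$ factor, and both use Lemma~\ref{indSum} to add the pieces. Where you diverge is in how you dispatch the two index-zero claims. For $\ind(\exp(w-i\widetilde{w}))=0$ the paper reasons through operator theory: Lemma~\ref{fredI} makes $T_f$ Fredholm, Proposition~\ref{commToe} makes $T_f$ invertible because $f$ is invertible in $H^\infty$, and Corollary~7.25 of Douglas then forces $j(T_f)=0$, whence $\ind(f)=0$ via \eqref{index}. You instead argue directly on the disk: the Poisson extension $G$ of an invertibly-$H^\infty$ function is analytic and zero-free on the simply connected $\mathbb{D}$, hence admits an analytic logarithm, hence the circles $\widehat{f}_r$ lift continuously through $\exp$ and wind zero times. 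This is a more elementary, purely topological route that bypasses the Fredholm machinery entirely. For the remaining continuous, nowhere-vanishing factors ($\exp(ip)$ and $\exp(q)$), the paper exhibits explicit homotopies $\lambda\mapsto\exp(i\lambda u)$, $\lambda\mapsto\exp(-\lambda v)$ and appeals to continuity and integer-valuedness of $\ind$; you observe instead that each has an obvious continuous periodic lift (respectively $p$ and $\log$ of a positive function), so the winding number is zero outright. These are essentially equivalent observations, but yours avoids invoking continuity of $\ind$ and Proposition~\ref{saras} in that step. One small remark: for $\exp(i\widetilde{q})\in(H^\infty+C(\mathbb{T}))^{-1}$, citing Proposition~\ref{saras} directly (with $n=0$, $w=0$) is a cleaner reference than routing through Theorem~\ref{classI} and Lemma~\ref{I}, though both are available at this point in the paper and both are logically sound.
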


\begin{proof}

Let $g\in VMO_{\mathbb{R}}$, $w\in L_\mathbb{R}^\infty$ and $n\in\mathbb{Z}$.  By Lemma 6.4.3 \cite[page 151-152]{Doug}, $\widehat{1}=1$ and $\widehat{\chi_n}(z)=z^n$ for all $z\in\mathbb{D}$ and all $n\in\mathbb{Z}$.  This yields \begin{equation}\label{indexConst}\ind(\chi_n)=n \text{ for every }n\in\mathbb{Z}.\end{equation}

Since $w\in L_\mathbb{R}^\infty$, $\exp(w-i\widetilde{w})$ is invertible in $H^\infty$ by Lemma \ref{inHardy}.  Moreover with $f=\exp(w-i\widetilde{w})$, $T_f\in \mathfrak{F}(H^2)$ by Lemma \ref{fredI} below and $T_f$ is invertible by Proposition \ref{commToe}.  It follows by Corollary 7.25 \cite[page 165]{Doug} that $j(T_f)=0$.  Hence by \eqref{index}, \begin{equation}\label{indL}\ind(\exp(w-i\widetilde{w}))=0 \text{ for every } w\in L_\mathbb{R}^\infty.\end{equation}

By \eqref{vmoReal}, $\exp(ig)=\exp(i(u-\widetilde{v}))=\exp(iu)\exp(-i\widetilde{v})$ for some $u,v\in C_\mathbb{R}(\mathbb{T})$.  Now  by Proposition \ref{saras} and the fact that $L^\infty$ is a commutative Banach Algebra with respect to function multiplication, the function $F:[0,1]\rightarrow (H^\infty+C(\mathbb{T}))^{-1}$ defined by $F(\lambda)=\exp(-\lambda v)$ is a path in $(H^\infty+C(\mathbb{T}))^{-1}$ between 1 and $\exp(-v)$.  It follows that $\ind(\exp(-v))=\ind(1)=0$.  Then since $\exp(-i\widetilde{v})=\exp(v-i\widetilde{v})\exp(-v)$, \eqref{indL} implies $\ind(\exp(-i\widetilde{v}))=0$.  Similarly the function $G:[0,1]\rightarrow (H^\infty+C(\mathbb{T}))^{-1}$ defined by $G(\lambda)=\exp(i\lambda u)$ is a path in $(H^\infty+C(\mathbb{T}))^{-1}$ between $\exp(iu)$ and 1.  Hence $\ind(\exp(iu))=0$.  So by Lemma \ref{indSum} \begin{equation}\label{indexVMO}\ind(\exp(ig))=0.\end{equation}  Therefore $\ind(\chi_n)=n$ and $\ind(\exp(ig))=\ind(\exp(w-i\widetilde{w}))=0.$
\end{proof}

\begin{cor}\label{classIk}
\begin{equation}\label{kComp}\begin{split}
& I_0 = \exp\left(QC_\mathbb{R}\right)\exp\left(iVMO_{\mathbb{R}}\right) \\& \text{and } I_k=\chi_kI_0 \ \text{ for all } k\in\mathbb{Z}\end{split}\end{equation}
\end{cor}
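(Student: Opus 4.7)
The plan is to combine the decomposition from Theorem \ref{classI} with the index computations of Lemma \ref{ind} and the additivity in Lemma \ref{indSum}. The key observation is that $\ind$ vanishes on every factor of $\exp(QC_\mathbb{R})\exp(iVMO_\mathbb{R})$, so the integer $n$ attached to $\chi_n$ in the canonical decomposition of an element of $\mathcal{I}$ necessarily coincides with $\ind(\phi)$.

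First I would establish the inclusion $\exp(QC_\mathbb{R})\exp(iVMO_\mathbb{R}) \subseteq I_0$. Given $\phi = \exp(w)\exp(ig)$ with $w \in QC_\mathbb{R}$ and $g \in VMO_\mathbb{R}$, Theorem \ref{classI} places $\phi$ in $\mathcal{I}$, so it only remains to check $\ind(\phi) = 0$. The one subtlety is that Lemma \ref{ind} does not directly compute $\ind(\exp(w))$ for $w \in QC_\mathbb{R}$, so I would instead write
\[\exp(w) = \exp(w - i\widetilde{w})\,\exp(i\widetilde{w}).\]
Since $w \in QC \subseteq VMO$, the containment $\widetilde{VMO_\mathbb{R}} \subseteq VMO_\mathbb{R}$ from \eqref{doublevmo} gives $\widetilde{w} \in VMO_\mathbb{R}$, so Lemma \ref{ind} kills the index of each factor. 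Lemma \ref{indSum} then yields $\ind(\exp(w)) = 0$, another application of Lemma \ref{ind} gives $\ind(\exp(ig)) = 0$, and a final application of Lemma \ref{indSum} delivers $\ind(\phi) = 0$.

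For the reverse inclusion, and simultaneously for the formula $I_k = \chi_k I_0$, I would take arbitrary $\phi \in \mathcal{I}$ and write $\phi = \chi_n \exp(w)\exp(ig)$ via Theorem \ref{classI}. By Lemma \ref{indSum}, Lemma \ref{ind}, and the computation just made, $\ind(\phi) = n$. Hence $\phi \in I_k$ holds exactly when $n = k$, in which case $\phi = \chi_k\bigl(\exp(w)\exp(ig)\bigr)$ with the parenthesized factor lying in $I_0$ by the first step. The reverse containment $\chi_k I_0 \subseteq I_k$ follows immediately from another application of Lemma \ref{indSum} and the fact that $\ind(\chi_k) = k$.

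There is no genuine obstacle; all the nontrivial content sits in Theorem \ref{classI}, Lemma \ref{ind}, Lemma \ref{indSum}, and the stability statement \eqref{doublevmo}. The only point that requires any care is recognizing that $\ind(\exp(w))$ for $w \in QC_\mathbb{R}$ must be handled via the factorization $\exp(w) = \exp(w-i\widetilde{w})\exp(i\widetilde{w})$, since the lemma only directly computes the index for the three specific building blocks $\chi_n$, $\exp(ig)$, and $\exp(w-i\widetilde{w})$.
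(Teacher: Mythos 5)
Your proposal is correct and essentially matches the paper's own proof: both decompose an arbitrary $\phi\in\mathcal{I}$ via Theorem \ref{classI}, both recognize that $\ind(\exp(w))$ for $w\in QC_\mathbb{R}$ is not directly given by Lemma \ref{ind} and so must be rewritten using $\exp(w-i\widetilde w)$ (the paper writes $\exp(ig)\exp(w)=\exp(i(g+\widetilde w))\exp(w-i\widetilde w)$, you write $\exp(w)=\exp(w-i\widetilde w)\exp(i\widetilde w)$, which is the same manipulation), and both then conclude via Lemma \ref{ind} and Lemma \ref{indSum} that the winding index of $\chi_n\exp(w)\exp(ig)$ equals $n$.
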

\begin{proof}
Let $\chi_n\exp(ig)\exp(w)\in\mathcal{I}$ where $g\in VMO_\mathbb{R}$, $w\in QC_\mathbb{R}$ and $n\in\mathbb{Z}$.  Clearly $\exp(ig)\exp(w)=\exp(i(g+\widetilde{w}))\exp(w-i\widetilde{w})$.  Then for any $k\in\mathbb{Z}$, Lemma \ref{ind} yields \[\ind(\chi_n\exp(ig)\exp(w))=k \text{ if and only if } n=k.\]  Therefore for any $n,k\in\mathbb{Z}$, $\chi_n\exp(ig)\exp(w)\in I_k$ if and only if $n=k$ and \eqref{kComp} holds.
\end{proof}

It is implied from \eqref{kComp} that in order to fully understand the sets $\{I_k\}_{k\in\mathbb{Z}}$, it suffices to study $I_0$.  This is the approach we will use to classify the path-connected components of $\mathcal{I}$.


\section{Path-connected components}

We will also need the following lemma in order to classify the path-connected components of $\mathcal{I}$ and of $\mathcal{F}$. The lemma follows from the definition of the path-connected components of a topological space \cite[page 160]{MunTop}.

\begin{lem}\label{equiv}
Let $X$ be a topological space and $\{A_\alpha\}_{\alpha\in \Omega}$ be a collection of subspaces of $X$.  Then $\{A_\alpha\}_{\alpha\in \Omega}$ are the path-connected components of $X$ if all of the following conditions hold:
\begin{enumerate}
\item $X=\bigcup_{\alpha\in \Omega}A_\alpha$.

\item $\{A_\alpha\}_{\alpha\in \Omega}$ are pairwise disjoint.

\item $\{A_\alpha\}_{\alpha\in \Omega}$ are all path-connected in $X$.

\item Any nonempty path-connected subspace of $X$ intersects only one $A_\alpha$ from $\{A_\alpha\}_{\alpha\in \Omega}$.

\end{enumerate}

\end{lem}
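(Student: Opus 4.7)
The plan is to show directly from the definition that the sets $\{A_\alpha\}_{\alpha \in \Omega}$ coincide with the path-connected components of $X$. Recall that the path-connected components of $X$ are precisely the maximal path-connected subsets of $X$, and they partition $X$. So there are two things to verify: each $A_\alpha$ is a maximal path-connected subset of $X$, and every path-connected component of $X$ arises in this way.

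For the first direction, fix $\alpha \in \Omega$. By hypothesis (3), $A_\alpha$ is path-connected. For maximality, suppose $B \subseteq X$ is a path-connected subset with $A_\alpha \subseteq B$. Then $B$ is nonempty and path-connected, so by (4) it meets exactly one of the $A_\beta$; since $B \cap A_\alpha \neq \emptyset$, this forces $B \cap A_\beta = \emptyset$ for every $\beta \neq \alpha$. Combined with (1), which says $B \subseteq \bigcup_{\beta} A_\beta$, this gives $B \subseteq A_\alpha$, so $B = A_\alpha$. Hence $A_\alpha$ is a maximal path-connected subset, i.e.\ a path-connected component.

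For the converse, let $C$ be any path-connected component of $X$ and choose some $x \in C$. By (1) there is an $\alpha \in \Omega$ with $x \in A_\alpha$. Since $A_\alpha$ is path-connected by (3) and $x \in A_\alpha \cap C$, the union $A_\alpha \cup C$ is path-connected (concatenate any path in $A_\alpha$ with any path in $C$ at $x$); maximality of the component $C$ then forces $A_\alpha \subseteq C$. On the other hand, $C$ is itself a nonempty path-connected subspace meeting $A_\alpha$, so by (4) it meets no other $A_\beta$, and combining this with (1) yields $C \subseteq A_\alpha$. Therefore $C = A_\alpha$.

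The argument is essentially routine topological bookkeeping, so the main obstacle is simply to invoke each of the four hypotheses in the right place. I note in passing that condition (2) is actually redundant, since applying (4) to the singleton $\{x\}$ together with (1) shows every point lies in exactly one $A_\alpha$; my proof therefore uses only (1), (3), and (4), but listing (2) in the hypotheses makes the lemma more convenient to apply in the later sections.
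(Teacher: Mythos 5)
Your proof is correct, and since the paper itself gives no argument for this lemma (it simply cites the definition of path components in Munkres), you are supplying details the paper leaves to the reader rather than taking a competing route. Both directions are carried out cleanly: each $A_\alpha$ is shown to be a maximal path-connected set using (1), (3), (4), and each component $C$ is pinned down to a unique $A_\alpha$ using maximality plus (1), (3), (4). Your parenthetical observation that (2) is redundant — singletons together with (1) and (4) already force the $A_\alpha$ to be disjoint — is a nice remark and is also correct.

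One small caveat worth being aware of: the argument (and the lemma itself) tacitly assumes the $A_\alpha$ are all nonempty. If some $A_\alpha$ were empty, it would vacuously satisfy (3), yet it is not a path component; your step ``$A_\alpha\subseteq B$, so $B$ is nonempty'' is where nonemptiness is silently used. In the paper every $A_\alpha$ that appears is nonempty, so this is only a matter of precision in stating the hypotheses, not a flaw in how the lemma is applied.
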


\subsection{Special Case $I_0$}

\subsubsection{First the Unimodular functions}

Let $U_0$ be the set of all unimodular functions in $I_0$.  Then by Corollary \ref{classIk}, \begin{equation}\label{unimDef}U_0=\exp\left(iVMO_{\mathbb{R}}\right).\end{equation} We will first prove $\{P_g\}_{\left\{\begin{subarray}{l} g\in VMO_{\mathbb{R}}\setminus L_\mathbb{R}^\infty \text{ or }\\ g=0\end{subarray}\right\}}$ are the path-connected components of $U_0$ where for any $g\in VMO_\mathbb{R}$, $P_g$ is defined by \eqref{pPath}.

\begin{lem}\label{pathCond}
Let $\exp(ig), \exp(ih) \in U_0$ where $g,h\in VMO_{\mathbb{R}}$.  Then $\exp(ig)$ is path-connected to $\exp(ih)$ in $U_0$ $\text{ if and only if } g-h\in QC_{\mathbb{R}}$.
\end{lem}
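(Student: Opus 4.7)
The plan is to prove the two implications separately; the reverse direction is a direct construction, while the forward direction requires a subdivision argument combined with the continuous functional calculus on the commutative $C^*$-algebra $QC$.

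For $(\Leftarrow)$, assume $g-h\in QC_{\mathbb{R}}$. I would define the straight-line homotopy $F:[0,1]\to U_0$ by $F(t)=\exp(i(h+t(g-h)))$. Since $QC_{\mathbb{R}}\subseteq VMO_{\mathbb{R}}$ and $VMO_{\mathbb{R}}$ is a real vector space, each $h+t(g-h)$ lies in $VMO_{\mathbb{R}}$, so $F(t)\in U_0$ by \eqref{unimDef}. Because $g-h\in L^\infty(\mathbb{T})$, the affine map $t\mapsto h+t(g-h)$ is Lipschitz into $L^\infty(\mathbb{T})$, and $\exp$ is continuous on the Banach algebra $L^\infty(\mathbb{T})$; hence $F$ is continuous in the $L^\infty$-norm and joins $\exp(ih)$ to $\exp(ig)$.

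For $(\Rightarrow)$, suppose $F:[0,1]\to U_0$ is a continuous path with $F(0)=\exp(ig)$ and $F(1)=\exp(ih)$. The strategy is to cut $F$ into pieces on which a holomorphic branch of $\log$ is available, extract a $QC_{\mathbb{R}}$ phase from each piece, and telescope. By uniform continuity of $F$, pick a partition $0=t_0<t_1<\cdots<t_n=1$ with $\|F(t_j)-F(t_{j+1})\|_\infty<1$ for every $j$. Each product $u_j:=F(t_j)\overline{F(t_{j+1})}$ lies in $QC$ (which is self-adjoint), is unimodular, and satisfies $\|u_j-1\|_\infty=\|F(t_j)-F(t_{j+1})\|_\infty<1$ since $|F(t_{j+1})|=1$ almost everywhere. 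Its spectrum in $QC$, equal to its essential range, therefore lies inside the open disk of radius $1$ about $1$, hence inside the domain of the principal branch of $\log$. By the continuous functional calculus on the commutative $C^*$-algebra $QC$, the element $\phi_j:=-i\log u_j$ lies in $QC$; since $u_j$ is unimodular, $\phi_j$ is real valued, so $\phi_j\in QC_{\mathbb{R}}$, and $F(t_j)=\exp(i\phi_j)F(t_{j+1})$.

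Telescoping yields $\exp(ig)=\exp(i\psi)\exp(ih)$ where $\psi:=\sum_{j=0}^{n-1}\phi_j\in QC_{\mathbb{R}}$, so $\exp(i(g-h-\psi))=1$ almost everywhere on $\mathbb{T}$, and $\frac{1}{2\pi}(g-h-\psi)$ is integer valued. Because $g,h\in VMO_{\mathbb{R}}$ and $\psi\in QC_{\mathbb{R}}\subseteq VMO_{\mathbb{R}}$, this quotient lies in $VMO_{\mathbb{R}}$, and Corollary \ref{Z} forces it to be a constant integer. Thus $g-h=\psi+2\pi k$ for some $k\in\mathbb{Z}$, placing $g-h$ in $QC_{\mathbb{R}}$. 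The main obstacle I anticipate is this functional-calculus step: ensuring each local logarithm lands in $QC$ (not merely in $L^\infty(\mathbb{T})$) is precisely what upgrades the conclusion from $g-h\in VMO_{\mathbb{R}}$ to $g-h\in QC_{\mathbb{R}}$, and it relies essentially on both the $C^*$-structure of $QC$ and the quantitative bound $\|u_j-1\|_\infty<1$.
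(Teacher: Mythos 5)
Your reverse direction matches the paper's in spirit (a straight-line exponential homotopy), but your forward direction takes a genuinely different route. The paper invokes its Lemma~\ref{exp}, the standard Banach-algebra fact that the identity component of the invertibles of a commutative Banach algebra $\mathfrak{B}$ is exactly $\exp(\mathfrak{B})$; applied to $QC$, a path from $\exp(i(g-h))$ to $1$ inside $\mathcal{I}$ immediately gives $\exp(i(g-h))=\exp(u+iv)$ with $u+iv\in QC$, and the rest is the same $2\pi\mathbb{Z}$-valued argument via Corollary~\ref{Z}. You instead re-derive the needed piece of that lemma from scratch: a uniform-continuity subdivision, local logarithms in the commutative $C^*$-algebra $QC$ justified by the spectral bound $\|u_j-1\|_\infty<1$, and telescoping. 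Both routes isolate a $QC_{\mathbb{R}}$ phase $\psi$ with $g-h-\psi\in 2\pi\mathbb{Z}$ and then kill it with Corollary~\ref{Z}. Your version is more self-contained but longer; the paper's is shorter and delegates the hard work to a cited result. Your reasoning that the principal $\log$ is holomorphic on a neighborhood of $\sigma(u_j)$ (an arc of the circle inside the disk $|z-1|<1$, which misses $(-\infty,0]$) and that $-i\log u_j$ is real-valued since $u_j$ is unimodular, is correct.

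One small imprecision in your $(\Leftarrow)$ direction: you write that $t\mapsto h+t(g-h)$ is ``Lipschitz into $L^\infty(\mathbb{T})$'' and then compose with $\exp$. But $h\in VMO_{\mathbb{R}}$ need not be bounded, so $h+t(g-h)$ need not lie in $L^\infty(\mathbb{T})$ and one cannot directly invoke continuity of $\exp$ on the Banach algebra $L^\infty(\mathbb{T})$. The fix is immediate: factor $F(t)=\exp(ih)\,\exp(it(g-h))$, note $g-h\in QC_{\mathbb{R}}\subseteq L^\infty$, so $t\mapsto\exp(it(g-h))$ is $\|\cdot\|_\infty$-continuous, and multiplication by the fixed unimodular $\exp(ih)$ is an $L^\infty$-isometry. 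This is precisely why the paper first reduces to connecting $\exp(i(g-h))$ to $1$ before writing down the exponential path.
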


\begin{proof}
Let $\exp(ig)$ and $\exp(ih)$ be as above.  Clearly $\exp(ig)$ is path-connected to $\exp(ih)$ in $U_0$ if and only if $\exp(i(g-h))$ is path-connected to 1 in $U_0$.  So it suffices to show $\exp(i(g-h))$ is path-connected to 1 in $U_0$ if and only if $g-h\in QC_{\mathbb{R}}$.

$(\Rightarrow)$  Assume $\exp(i(g-h))$ is path-connected to 1 in $U_0$.  Recall $G_0$ is the connected component of $\mathcal{I}$ containing 1.  Then $\exp(i(g-h))\in G_0$ and Lemma \ref{exp} implies \[\exp(i(g-h))=\exp(u+iv)\] for some $u+iv\in QC$ where $u$ and $v$ are real-valued.  It follows that \[1=\exp(u)\mbox{ and } \exp(i(g-h))=\exp(iv).\]  Thus we have \begin{equation}\label{uniEq}u=0 \mbox{ and } g-h=v+2\pi\sigma \text{ for some } \sigma:\mathbb{T}\rightarrow\mathbb{Z}.\end{equation}  Since $u+iv\in QC$, direct calculation and \eqref{quasi} imply $v\in QC_{\mathbb{R}}$.  Hence by \eqref{uniEq}, $\sigma\in VMO$.  Then by Corollary \ref{Z}, $\sigma$ is constant.  Therefore $g-h\in QC_{\mathbb{R}}$.

$(\Leftarrow)$  Now assume $g-h\in QC_{\mathbb{R}}$.  Since $QC$ is a commutative Banach Algebra, the function $G:[0,1]\rightarrow U_0$ defined by $G(\lambda)=\exp(i\lambda(g-h))$ is continuous on $[0,1]$.  It follows that $G$ is a path between $\exp(i(g-h))$ and 1 in $U_0$.  Therefore $\exp(i(g-h))$ is path-connected to 1 in $U_0$.

\end{proof}

\begin{lem}\label{pathComp} The path-connected components of $U_0$ are \\$\{P_g\}_{\left\{\begin{subarray}{l} g\in VMO_{\mathbb{R}}\setminus L_\mathbb{R}^\infty \text{ or }\\ g=0\end{subarray}\right\}}$
\end{lem}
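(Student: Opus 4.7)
The plan is to invoke Lemma \ref{equiv} with $X = U_0$ and with the family of subspaces being the $P_g$ as indexed in the statement. The key observation, already essentially contained in Lemma \ref{pathCond}, is that two elements $\exp(ig), \exp(ih) \in U_0$ (with $g,h\in VMO_\mathbb{R}$) lie in the same path component of $U_0$ if and only if $g - h \in QC_\mathbb{R}$. From this I would deduce that for every $g \in VMO_\mathbb{R}$, the set $P_g = \{\exp(i(g+q)) : q \in QC_\mathbb{R}\}$ is precisely the path component of $\exp(ig)$ in $U_0$: any two points in $P_g$ differ by exponentiating two elements of $QC_\mathbb{R}$, so they are joined by a path via Lemma \ref{pathCond}, and conversely any point path-connected to $\exp(ig)$ must be of the form $\exp(ih)$ with $h-g \in QC_\mathbb{R}$, hence lies in $P_g$. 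This single observation simultaneously delivers condition (3) (each $P_g$ is path-connected in $U_0$) and condition (4) (any path-connected subset of $U_0$ meets at most one $P_g$), and it also forces distinct $P_g$'s in the family to be disjoint, giving condition (2).

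The remaining condition (1) — that the prescribed family covers $U_0$ — reduces to showing that the index set ``$g=0$ or $g \in VMO_\mathbb{R}\setminus L^\infty_\mathbb{R}$'' hits every coset of $QC_\mathbb{R}$ in $VMO_\mathbb{R}$. Given any $\exp(if)\in U_0$ with $f\in VMO_\mathbb{R}$, I split into two cases. If $f\in L^\infty_\mathbb{R}$, then by \eqref{quasi} $f\in QC_\mathbb{R}$, so $f-0\in QC_\mathbb{R}$ and $\exp(if)\in P_0$. If $f\notin L^\infty_\mathbb{R}$, then $f\in VMO_\mathbb{R}\setminus L^\infty_\mathbb{R}$ is itself a valid index and $\exp(if)\in P_f$. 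Thus every element of $U_0$ is captured by some $P_g$ from the prescribed family.

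I do not foresee a serious obstacle: the substantive analytic content has already been carried out in Lemma \ref{pathCond}, and what remains is bookkeeping. The only mildly subtle point is to notice that once $f\in VMO_\mathbb{R}$ fails to be in $L^\infty_\mathbb{R}$, adding any $q\in QC_\mathbb{R}\subseteq L^\infty_\mathbb{R}$ keeps $f+q$ unbounded, so the coset $f+QC_\mathbb{R}$ lies entirely in $VMO_\mathbb{R}\setminus L^\infty_\mathbb{R}$. This confirms that the two cases ``$g=0$'' and ``$g\in VMO_\mathbb{R}\setminus L^\infty_\mathbb{R}$'' are mutually exclusive and jointly exhaustive at the level of cosets, so the listed family of $P_g$'s accounts for every path component of $U_0$ exactly once.
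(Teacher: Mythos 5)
Your proposal is correct and follows essentially the same route as the paper: both rely on Lemma \ref{pathCond} to identify each $P_g$ with a coset of $QC_\mathbb{R}$ in $VMO_\mathbb{R}$, verify the four conditions of Lemma \ref{equiv}, and handle the cover by the same case split on whether $f$ lies in $L^\infty_\mathbb{R}$ (using \eqref{quasi}). Your closing remark that $f+QC_\mathbb{R}$ stays outside $L^\infty_\mathbb{R}$ whenever $f$ does is a nice clarifying observation but is not needed beyond what the paper already does.
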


\begin{proof}

From Lemma \ref{pathCond}, \[P_g \text{ is path-connected in }U_0\text{ for all } g\in VMO_{\mathbb{R}}.\]  Let $\exp(ih)\in P_w\cap P_q$ with $h\in VMO_{\mathbb{R}}$, $P_w,P_q\in \{P_g\}_{\left\{\begin{subarray}{l} g\in VMO_{\mathbb{R}}\setminus L_\mathbb{R}^\infty \text{ or }\\ g=0\end{subarray}\right\}}$.  Then by \eqref{pPath}, $h-w, h-q\in QC_{\mathbb{R}}$.  Hence $w-q\in QC_{\mathbb{R}}$.  This means $P_w=P_q$.  It follows that $\{P_g\}_{\left\{\begin{subarray}{l} g\in VMO_{\mathbb{R}}\setminus L_\mathbb{R}^\infty \text{ or }\\ g=0\end{subarray}\right\}}$ is pairwise disjoint.


By \eqref{pPath} and \eqref{unimDef} $P_g\subseteq U_0 \ \text{ for all } g \in VMO_{\mathbb{R}}$.  This yields $P_0\cup \bigcup_{g\in VMO_{\mathbb{R}}\setminus L_\mathbb{R}^\infty} P_g\subseteq U_0$.  Let $\exp(ih)\in U_0$.  Then by definition of $U_0$, $h\in VMO_\mathbb{R}$.  If $h\in L_\mathbb{R}^\infty$, then by \eqref{quasi}, $h\in QC_\mathbb{R}$.  Thus $\exp(ih)\in P_0$.  If $h\notin L_\mathbb{R}^\infty$, then $\exp(ih)\in P_h\subseteq\bigcup_{g\in VMO_{\mathbb{R}}\setminus L_\mathbb{R}^\infty} P_g $.  So $U_0\subseteq P_0\cup\bigcup_{g\in VMO_{\mathbb{R}}\setminus L_\mathbb{R}^\infty} P_g$.  Thus \[U_0=P_0\cup \bigcup_{g\in VMO_{\mathbb{R}}\setminus L_\mathbb{R}^\infty} P_g.\]

Let $V$ be any nonempty path-connected subspace of $U_0$ such that $V\cap P_h\neq\emptyset$ and $V\cap P_w\neq\emptyset$ for some $P_w,P_h\in\{P_g\}_{\left\{\begin{subarray}{l} g\in VMO_{\mathbb{R}}\setminus L_\mathbb{R}^\infty \text{ or }\\ g=0\end{subarray}\right\}}$.  Then since $\exp(ih)\in P_h$ and $\exp(iw)\in P_w$, $\exp(ih)$ is path-connected to $\exp(iw)$ in $U_0$.  So by Lemma \ref{pathCond} $w-h\in QC_\mathbb{R}$ and $P_h=P_w$.  Therefore by Lemma \ref{equiv} $\{P_g\}_{\left\{\begin{subarray}{l} g\in VMO_{\mathbb{R}}\setminus L_\mathbb{R}^\infty \text{ or }\\ g=0\end{subarray}\right\}}$ are the path-connected components of $U_0$.

\end{proof}

\subsubsection{Path-connected components in $I_0$}

We can now completely classify the path-connected components of $I_0$.

\begin{thm}\label{connIo}
The path-connected components of $I_0$ are \newline$\{Q_g\}_{\left\{\begin{subarray}{l} g\in VMO_{\mathbb{R}}\setminus L_\mathbb{R}^\infty \text{ or }\\ g=0\end{subarray}\right\}}$ where $Q_g$ is defined by \eqref{uniPath} for any $g\in VMO_\mathbb{R}$.

\end{thm}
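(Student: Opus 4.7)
The plan is to verify the four hypotheses of Lemma \ref{equiv} for the family $\{Q_g\}$ indexed over $g\in\{0\}\cup(VMO_{\mathbb{R}}\setminus L_{\mathbb{R}}^\infty)$, reducing conditions (2) and (4) to the already-proved Lemma \ref{pathComp} via a continuous retraction from $I_0$ onto $U_0$.

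As preliminary repackaging, I would note that since $QC$ is self-adjoint every $q\in QC$ decomposes as $\operatorname{Re}(q)+i\operatorname{Im}(q)$ with both summands lying in $QC_{\mathbb{R}}$, so $\exp(QC_{\mathbb{R}})\exp(iQC_{\mathbb{R}})=\exp(QC)$. Combining this with \eqref{pPath} and \eqref{uniPath} gives $Q_g=\exp(QC)\exp(ig)$, and Lemma \ref{exp} applied to the commutative Banach algebra $QC$ identifies $\exp(QC)$ with $G_0$; hence $Q_g=G_0\exp(ig)$. This immediately yields condition (3): $G_0$ is the continuous image of the Banach space $QC$ under $\exp$, so it is path-connected, and right multiplication by the invertible element $\exp(ig)\in\mathcal{I}$ is a self-homeomorphism of $\mathcal{I}$. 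Condition (1) follows from Corollary \ref{classIk}: any $f\in I_0$ can be written $f=\exp(w)\exp(ig)$ with $w\in QC_{\mathbb{R}}$ and $g\in VMO_{\mathbb{R}}$, and \eqref{quasi} places $g\in QC_{\mathbb{R}}$ when $g\in L_{\mathbb{R}}^\infty$, so either $f\in Q_0=G_0$ or $f\in Q_g$ with $g\in VMO_{\mathbb{R}}\setminus L_{\mathbb{R}}^\infty$.

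For conditions (2) and (4), I would define $\Phi\colon I_0\to U_0$ by $\Phi(f)=f/|f|$. Elements of $\mathcal{I}$ are invertible in $QC\subseteq L^\infty$, so their moduli are bounded below a.e.\ by a positive constant depending locally uniformly on $f$; together with the pointwise $1$-Lipschitz estimate for $|\cdot|$ this makes $\Phi$ continuous on $I_0$ in the $L^\infty$-topology. If $f=\exp(w)\exp(ig')\in Q_g$ with $w\in QC_{\mathbb{R}}$ and $g'\in QC_{\mathbb{R}}+\{g\}$, then $|f|=\exp(w)$ and $\Phi(f)=\exp(ig')\in P_g$; surjectivity onto $P_g$ is immediate from $P_g\subseteq Q_g$. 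Thus $\Phi(Q_g)=P_g$. If $Q_{g_1}\cap Q_{g_2}\ne\emptyset$, applying $\Phi$ yields $P_{g_1}\cap P_{g_2}\ne\emptyset$, and the disjointness argument inside Lemma \ref{pathComp} forces $g_1-g_2\in QC_{\mathbb{R}}$, which by \eqref{pPath} and \eqref{uniPath} gives $Q_{g_1}=Q_{g_2}$; this is condition (2). For condition (4), any path-connected $V\subseteq I_0$ is sent by $\Phi$ to a path-connected subset of $U_0$, which by Lemma \ref{pathComp} meets only one $P_g$; hence $V$ meets only one $Q_g$.

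The main obstacle I anticipate is continuity of $\Phi$ and the care required to navigate the non-uniqueness of the decomposition $f=\exp(w)\exp(ig)$: two such representations of the same $f$ can occur, but the identity $|\exp(w)\exp(ig)|=\exp(w)$ canonizes $w=\ln|f|\in QC_{\mathbb{R}}$ and so makes $\Phi$ well defined. Once $\Phi$ is available, the theorem is a clean reduction of the full problem to the unimodular case already handled in Lemma \ref{pathComp}.
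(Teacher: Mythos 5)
Your argument is correct and follows the same basic strategy as the paper's: verify the four conditions of Lemma \ref{equiv}, getting path-connectedness from the convexity of $QC_\mathbb{R}$ together with Lemma \ref{pathComp}, and reduce disjointness and the separation condition to the unimodular case by dividing out the modulus. The only notable organizational difference is that you define the retraction $\Phi(f)=f/|f|$ once as a continuous map $I_0\to U_0$ and use it to derive both condition (2) and condition (4) from Lemma \ref{pathComp}, whereas the paper proves condition (2) from scratch (writing an element of $Q_q\cap Q_y$ in two ways, matching moduli and phases, and invoking Corollary \ref{Z} to kill the integer-valued discrepancy) and applies the normalization $F/|F|$ only to a path in condition (4) without discussing the map $\Phi$ in general. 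Your version is slightly more economical because it avoids re-running the Corollary \ref{Z} argument, at the cost of having to verify continuity of $\Phi$ explicitly — which you handle correctly by noting that elements of $\mathcal{I}$ are bounded below in modulus, so $f\mapsto|f|$ stays locally away from $0$ and the quotient is $L^\infty$-continuous. The identification $Q_g=\exp(QC)\exp(ig)=G_0\exp(ig)$ via Lemma \ref{exp}, and the observation that right multiplication by $\exp(ig)$ is a self-homeomorphism of $\mathcal{I}$, is also a slicker packaging of condition (3) than the paper's explicit line-segment path inside $\exp(QC_\mathbb{R})$, though it proves the same thing.
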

\begin{proof}
Let $x,y\in QC_\mathbb{R}$.  Since $QC$ is a Banach Algebra, the function $F:[0,1]\rightarrow \exp(QC_\mathbb{R})$ defined by $F(\lambda)=\exp(x+\lambda(y-x))$ is a path between $\exp(x)$ and $\exp(y)$.  Thus $\exp(QC_\mathbb{R})$ is a path-connected subset of $\mathcal{I}$.  So by Lemma \ref{pathComp} and Corollary \ref{classIk}, \[Q_g \text{ is path-connected in } I_0\text{ for all } g\in VMO_{\mathbb{R}}.\]

Let $\exp(w)\exp(ih)\in Q_q\cap Q_y$, where $w\in QC_{\mathbb{R}}$, $h\in VMO_\mathbb{R}$, and $Q_q,Q_y\in \{Q_g\}_{\left\{\begin{subarray}{l} g\in VMO_{\mathbb{R}}\setminus L_\mathbb{R}^\infty \text{ or }\\ g=0\end{subarray}\right\}}$.  Then for some $u\in QC_\mathbb{R}$ and $v\in VMO_\mathbb{R}$ with $\exp(iv)\in P_q$, $\exp(w)\exp(ih)=\exp(u)\exp(iv)$.  Then $w=u$ and $h-v=2\pi\eta$ for some integer valued function $\eta$ on $\mathbb{T}$.  By Corollary \ref{Z}, $\eta$ is constant and $\exp(ih)\in P_q$.  Similarly $\exp(ih)\in P_y$.  Then by Lemma \ref{pathComp}, $P_q=P_y$.  This means $\{Q_g\}_{\left\{\begin{subarray}{l} g\in VMO_{\mathbb{R}}\setminus L_\mathbb{R}^\infty \text{ or }\\ g=0\end{subarray}\right\}}$ is pairwise disjoint.


By Corollary \ref{classIk} and Lemma \ref{pathComp}, \[I_0=\exp(QC_\mathbb{R})U_0=\exp(QC_\mathbb{R})P_0\cup \bigcup_{g\in VMO_{\mathbb{R}}\setminus L_\mathbb{R}^\infty} \exp(QC_\mathbb{R})P_g.\]  Hence by \eqref{uniPath}, \[I_0=Q_0\cup\bigcup_{g\in VMO_{\mathbb{R}}\setminus L_\mathbb{R}^\infty}Q_g.\]

Let $V$ be any nonempty path-connected subspace of $I_0$ satisfying $V\cap Q_h\neq\emptyset$ and $V\cap Q_w\neq\emptyset$ for some $Q_h,Q_w\in\{Q_g\}_{\left\{\begin{subarray}{l} g\in VMO_{\mathbb{R}}\setminus L_\mathbb{R}^\infty \text{ or }\\ g=0\end{subarray}\right\}}$.  Then since $\exp(ih)\in Q_h$ and $\exp(iw)\in Q_w$, $\exp(ih)$ is path-connected to $\exp(iw)$ in $I_0$ by some path $F$.  This means $\frac{F}{|F|}$ is a path in $U_0$ between $\exp(iw)$ and $\exp(ih)$. So by Lemma \ref{pathComp}, $P_h= P_w$.  Thus $Q_h=Q_w$.  Therefore by Lemma \ref{equiv} $\{Q_g\}_{\left\{\begin{subarray}{l} g\in VMO_{\mathbb{R}}\setminus L_\mathbb{R}^\infty \text{ or }\\ g=0\end{subarray}\right\}}$ are the path-connected components of $I_0$.

\end{proof}

\subsection{Case of $I_k$ with $k\neq0$}

Now that we have classified the path-connected components of $I_0$, we can classify the path-connected components of $I_k$ for any $k\in\mathbb{Z}\setminus\{0\}$.  This part of the classification is very easy due to the classification of $I_0$ and Proposition \ref{homIndex} below.

\begin{prop} \label{homIndex}
For any $k\in\mathbb{Z}$, $I_k$ is homeomorphic to $I_0$.
\end{prop}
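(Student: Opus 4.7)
The plan is to produce the homeomorphism explicitly as multiplication by $\chi_k$. Define $\Phi_k : I_0 \to I_k$ by $\Phi_k(\phi) = \chi_k \phi$. By Corollary \ref{classIk}, $I_k = \chi_k I_0$, so $\Phi_k$ is a surjection onto $I_k$. The natural candidate for an inverse is the map $\Phi_{-k} : I_k \to I_0$ given by $\phi \mapsto \chi_{-k}\phi$; I would first verify that this map is well-defined by using $I_k = \chi_k I_0$ again, so that $\chi_{-k}(\chi_k I_0) = I_0$. The identity $\chi_k \chi_{-k} = 1$ then makes $\Phi_k$ and $\Phi_{-k}$ two-sided inverses, giving bijectivity.

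Next I would verify continuity in the $\|\cdot\|_\infty$ topology inherited from the ambient $QC \subseteq L^\infty(\mathbb{T})$. Since $|\chi_k(\theta)| = 1$ pointwise on $\mathbb{T}$, one has
\[
\|\chi_k \phi - \chi_k \phi'\|_\infty = \|\chi_k(\phi - \phi')\|_\infty = \|\phi - \phi'\|_\infty
\]
for all $\phi, \phi' \in I_0$, so $\Phi_k$ is in fact an isometry, and symmetrically so is $\Phi_{-k}$. Combining bijectivity with the isometric (hence bicontinuous) property gives that $\Phi_k$ is a homeomorphism from $I_0$ onto $I_k$.

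There is essentially no obstacle here: the proposition is a direct consequence of the coset decomposition $I_k = \chi_k I_0$ supplied by Corollary \ref{classIk}, together with the elementary observation that multiplication by a unimodular $L^\infty$ function is an $L^\infty$-isometry. The only thing to be mildly careful about is confirming that $\Phi_{-k}$ lands inside $I_0$ (not merely inside $\mathcal{I}$), which is immediate from the same coset decomposition.
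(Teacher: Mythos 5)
Your proof is correct and takes essentially the same approach as the paper: both define the homeomorphism as multiplication by $\chi_k$, with inverse multiplication by $\chi_{-k}$, invoking Corollary \ref{classIk} for well-definedness of both maps. Your observation that the map is actually an $L^\infty$-isometry (since $|\chi_k|=1$) is a mild sharpening of the paper's appeal to continuity of multiplication in the Banach algebra $QC$, but the substance is identical.
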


\begin{proof}
Let $k\in\mathbb{Z}$.  Let $\Gamma_k:I_0\rightarrow I_k$ and $\Theta_k:I_k\rightarrow I_0$ be defined by \[\Gamma_k(\phi)=\chi_k\phi \mbox{ and } \Theta_k(\psi)=\chi_{-k}\psi.\]  Both $\Gamma_k$ and $\Theta_k$ are well-defined by Corollary \ref{classIk}.  Since $QC$ is a Banach Algebra, $\Gamma_k$ and $\Theta_k$ are both continuous on their domains.  Moreover since $\left(\chi_k\right)^{-1}=\chi_{-k}$ with respect to multiplication, we have $\Gamma_k^{-1}=\Theta_k$.  Therefore $\Gamma_k$ is a homeomorphism and $I_k$ is homeomorphic to $I_0$.
\end{proof}

\begin{thm}\label{connIk}
For any $k\in\mathbb{Z}$, the path-connected components of $I_k$ are $\{\chi_kQ_g\}_{\left\{\begin{subarray}{l}g\in VMO_{\mathbb{R}}\setminus L_\mathbb{R}^\infty \text{ or }\\ g=0\end{subarray}\right\}}$.
\end{thm}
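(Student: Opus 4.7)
The plan is to derive Theorem \ref{connIk} as an immediate corollary of Theorem \ref{connIo} by transporting the classification along the homeomorphism $\Gamma_k\colon I_0\to I_k$ furnished by Proposition \ref{homIndex}.

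First I would invoke the standard topological fact that any homeomorphism $F\colon X\to Y$ induces a bijection on path-connected components, sending each component $A$ of $X$ to the component $F(A)$ of $Y$. Applied to $\Gamma_k$, this says the path-connected components of $I_k$ are exactly the images under $\Gamma_k$ of the path-connected components of $I_0$. If desired, this can be verified through Lemma \ref{equiv}, since a homeomorphism pushes a partition of $I_0$ into pairwise disjoint, path-connected subsets that cover $I_k$, and the preimage of any path-connected subset of $I_k$ is a path-connected subset of $I_0$, hence meets exactly one $Q_g$.

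Next I would use Theorem \ref{connIo}, which identifies the path-connected components of $I_0$ as the family $\{Q_g\}$ indexed by $g \in VMO_{\mathbb{R}}\setminus L_\mathbb{R}^\infty$ together with $g=0$. Since $\Gamma_k(\phi)=\chi_k\phi$, a direct set-theoretic computation gives $\Gamma_k(Q_g)=\chi_k Q_g$. Combining this with the previous paragraph yields the claimed classification.

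Since the result is essentially a one-step transfer along a homeomorphism, there is no real obstacle. The only items to check are the trivial equality $\Gamma_k(Q_g)=\chi_k Q_g$, which is immediate from the definition of $\Gamma_k$, and the preservation of the path-component structure by the homeomorphism, which is standard.
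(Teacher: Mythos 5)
Your proposal is correct and follows exactly the paper's approach: the paper likewise transports the classification of path-components of $I_0$ (Theorem \ref{connIo}) along the homeomorphism $\Gamma_k$ of Proposition \ref{homIndex}, notes $\Gamma_k(Q_g)=\chi_kQ_g$, and appeals to Lemma \ref{equiv}. You have simply spelled out the ``straightforward calculation'' the paper leaves implicit.
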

\begin{proof}

Let $\Gamma_k$ be as in Proposition \ref{homIndex}.  Then \[\Gamma_k(Q_g)=\chi_kQ_g \text{ for all } g\in VMO_{\mathbb{R}}.\]  Therefore a straightforward calculation using $\Gamma_k$ and $\Theta_k$ along with Theorem \ref{connIo} , Proposition \ref{homIndex} and Lemma \ref{equiv}, $\{\chi_kQ_g\}_{\left\{\begin{subarray}{l} g\in VMO_{\mathbb{R}}\setminus L_\mathbb{R}^\infty \text{ or }\\ g=0\end{subarray}\right\}}$ are the path-connected components of $I_k$.
\end{proof}

\section{Proof of Theorem \ref{mainI}}

\begin{proof}  By Theorem \ref{connIk}, \[\{\chi_kQ_g\}_{\left\{\begin{subarray}{l} k\in\mathbb{Z} \text{ and either }\\ g\in VMO_{\mathbb{R}}\setminus L_\mathbb{R}^\infty \text{ or }\\ g=0\end{subarray}\right\}} \text{ is a collection of path-connected sets in } \mathcal{I}\] and \[\mathcal{I}=\bigcup_{\left\{\begin{subarray}{l} k\in\mathbb{Z} \text{ and either } \\ g\in VMO_{\mathbb{R}}\setminus L_\mathbb{R}^\infty \text{ or }g=0\end{subarray}\right\}}\chi_kQ_g.\]

Let $\chi_p\exp(w)\exp(ih)\in \chi_nQ_q\cap \chi_mQ_y$ for some $w\in QC_\mathbb{R}$, $h\in VMO_\mathbb{R}$, $p\in\mathbb{Z}$ and $\chi_mQ_y,\chi_nQ_q\in\{\chi_kQ_g\}_{\left\{\begin{subarray}{l} k\in\mathbb{Z} \text{ and either }\\ g\in VMO_{\mathbb{R}}\setminus L_\mathbb{R}^\infty \text{ or }\\ g=0\end{subarray}\right\}}$.  Then by by Theorem \ref{connIk}, $\chi_p\exp(w)\exp(ih)\in I_n\cap I_m$.  It follows that $\ind(\chi_p\exp(w)\exp(ih))=m$ and $\ind(\chi_p\exp(w)\exp(ih))=n$.  So by Lemma \ref{ind} $m=n=p$ and by Theorem \ref{connIk}, $\chi_nQ_g=\chi_mQ_y$.  That is $\{\chi_kQ_g\}_{\left\{\begin{subarray}{l} k\in\mathbb{Z} \text{ and either }\\ g\in VMO_{\mathbb{R}}\setminus L_\mathbb{R}^\infty \text{ or }\\ g=0\end{subarray}\right\}}$ is pairwise disjoint.

Let $W$ be a nonempty path-connected subspace of $\mathcal{I}$ so that $W\cap \chi_nQ_q\neq\emptyset$ and $W\cap \chi_mQ_y\neq\emptyset$ for some $\chi_mQ_y,\chi_nQ_q\in\{\chi_kQ_g\}_{\left\{\begin{subarray}{l} k\in\mathbb{Z} \text{ and either }\\ g\in VMO_{\mathbb{R}}\setminus L_\mathbb{R}^\infty \text{ or }\\ g=0\end{subarray}\right\}}$.  Then since $\chi_n\exp(iq)\in \chi_nQ_q$ and $\chi_m\exp(iy)\in \chi_mQ_y$, $\chi_n\exp(iq)$ is path-connected to $\chi_m\exp(iy)$ in $\mathcal{I}$.  Let $F:[0,1]\rightarrow\mathcal{I}$ be the path.  Since $\ind$ is continuous on $\mathcal{I}$, we must have $m=n=\ind(F(\lambda))$ for all $\lambda\in[0,1]$.  Hence $\chi_n\exp(iq)$ is path-connected to $\chi_n\exp(iy)$ by $F$ in $I_n$.  Then from Theorem \ref{connIk}, $Q_y=Q_q$.  Therefore $\chi_mQ_y=\chi_nQ_q$ and by Lemma \ref{equiv}, $\{\chi_kQ_g\}_{\left\{\begin{subarray}{l} k\in\mathbb{Z} \text{ and either }\\ g\in VMO_{\mathbb{R}}\setminus L_\mathbb{R}^\infty \text{ or }\\ g=0\end{subarray}\right\}}$ are the path-connected components of $\mathcal{I}$.

\end{proof}

\section{Uncountably many path-connected components of $\mathcal{I}$}\label{uncount}
In this section we will prove $\mathcal{I}$ has uncountably many path-connected components.  To do this we will construct an explicit example of a function $H$ such that $H\in VMO_\mathbb{R}\setminus L_\mathbb{R}^\infty$.  We will also use $H$ later on to show $\mathcal{F}$ has uncountably many path-connected components.  To construct $H$, we will need two following theorems from  \cite[page 182]{TrigSer} and \cite[page 101]{SeriesFourier} respectively .

\begin{thm}\label{trigThm}
Suppose $a_v\geq a_{v+1}$ and $a_v\rightarrow0$.  Then a necessary and sufficient condition for the uniform convergence of $\sum_{v=1}^\infty a_v\sin(vx)$ is $va_v\rightarrow 0$.
\end{thm}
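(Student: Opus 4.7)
The plan is to prove sufficiency and necessity separately, using Abel summation and the classical Dirichlet-kernel estimate $\bigl|\sum_{j=1}^n \sin(jx)\bigr| \leq 1/\sin(x/2) \leq \pi/x$ valid for $x \in (0,\pi]$.

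For sufficiency, suppose $va_v \to 0$. Given $\epsilon > 0$, choose $N_0$ so that $va_v < \epsilon$ whenever $v \geq N_0$. The goal is to bound the Cauchy remainder $S_{N,M}(x) = \sum_{v=N+1}^M a_v\sin(vx)$ uniformly in $x \in [0,\pi]$ and $M > N \geq N_0$. The case $x=0$ is trivial, and by oddness/periodicity it suffices to consider $x \in (0,\pi]$. Set $k = \lfloor 1/x \rfloor$ and split the sum at $v=k$. On the low-frequency block $N+1 \leq v \leq k$ (empty if $k \leq N$), apply $|\sin(vx)| \leq vx$ to obtain $\bigl|\sum_{v=N+1}^k a_v\sin(vx)\bigr| \leq x\sum_{v=N+1}^k v a_v \leq xk\,\epsilon \leq \epsilon$. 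On the high-frequency block $\max(k,N)+1 \leq v \leq M$, apply Abel summation with the Dirichlet estimate to get a bound of the form $2\pi a_j/x$, where $j$ is the first index of that block. Whether $j=k+1$ (so $j > 1/x$, giving $a_j/x \leq ja_j < \epsilon$) or $j=N+1$ (with $N \geq 1/x$, again giving $a_j/x \leq Na_j \leq ja_j < \epsilon$), the high-frequency part is at most $2\pi\epsilon$. Hence $|S_{N,M}(x)| \leq (2\pi+1)\epsilon$ uniformly in $x$, so the series is uniformly Cauchy and therefore uniformly convergent.

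For necessity, assume uniform convergence. The uniform Cauchy criterion produces, for each $\epsilon > 0$, an $N_0$ such that $\bigl|\sum_{v=N+1}^{2N} a_v\sin(vx)\bigr| < \epsilon$ for all $N \geq N_0$ and all $x$. Choose $x = \pi/(4N)$, so that $vx \in [\pi/4,\pi/2]$ and hence $\sin(vx) \geq \sqrt{2}/2$ for every $v \in [N+1,2N]$. Monotonicity $a_v \geq a_{2N}$ on this range then gives $\epsilon > (\sqrt{2}/2)\,N a_{2N}$, so $(2N)a_{2N} \to 0$. For a general $M$, writing $N = \lfloor M/2\rfloor$ and combining $a_M \leq a_{2N}$ with $M \leq 2N+1$ forces $Ma_M \leq (2N+1)a_{2N} \to 0$, as required.

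The main technical obstacle is keeping the estimates uniform in $x$ in the sufficiency direction: the cutoff $k = \lfloor 1/x \rfloor$ is precisely what makes the two crude inequalities (the trivial $|\sin| \leq vx$ on the low block and the Dirichlet bound on the high block) compatible, and the hypothesis $va_v \to 0$ is exactly what is needed to control both contributions simultaneously. Everything else is bookkeeping with Abel summation.
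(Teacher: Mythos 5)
The paper does not prove this theorem; it simply quotes it from Zygmund's \emph{Trigonometric Series} as a black box used later to construct the unbounded $VMO$ function $H$. So there is no in-paper proof to compare against. That said, your argument is correct and is essentially the standard Chaundy--Jolliffe proof that appears in the cited reference: for sufficiency you split the Cauchy tail at $k=\lfloor 1/x\rfloor$, bound the low-frequency block via $|\sin(vx)|\le vx$ and the hypothesis $va_v<\epsilon$, and bound the high-frequency block by Abel summation against the conjugate Dirichlet kernel estimate $|\sum_{v=1}^n\sin(vx)|\le \pi/x$, yielding $2\pi a_j/x\le 2\pi j a_j<2\pi\epsilon$ since the first index $j$ of that block exceeds $1/x$; for necessity you test the uniform Cauchy criterion on the block $[N+1,2N]$ at $x=\pi/(4N)$, where monotonicity and positivity (which follow from $a_v\downarrow 0$) force $\sin(vx)\ge\sqrt 2/2$ and hence $Na_{2N}\to 0$, from which $Ma_M\to 0$ follows for all $M$. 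The only cosmetic slip is the parenthetical \enquote{with $N\ge 1/x$} in the case $j=N+1$: what you actually have is $N\ge k=\lfloor 1/x\rfloor$, hence $j=N+1>1/x$, which is exactly what the estimate $a_j/x<ja_j$ needs, so the conclusion is unaffected.
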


\begin{thm}\label{fourSeri}
If the coefficients $a_n$ and $b_n$ of the series \[\frac{a_0}{2}+\sum_{n=1}^\infty a_n\cos(nx), \sum_{n=1}^\infty b_n\sin(nx)\] are positive and decrease monotonically to zero as $n\rightarrow\infty$, then both series converge uniformly on any interval $[a,b]$ which does not contain points of the form $x=2k\pi$ $(k=0,\pm 1\pm 2,\ldots)$.
\end{thm}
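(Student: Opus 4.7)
The plan is to prove both convergence statements simultaneously by combining Abel's summation by parts with a uniform bound on the Dirichlet-type partial sums. Throughout, fix an interval $[a,b]$ disjoint from $\{2k\pi : k\in\mathbb{Z}\}$, and set $C_N(x)=\sum_{n=1}^N \cos(nx)$ and $S_N(x)=\sum_{n=1}^N \sin(nx)$.

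\textbf{Step 1: Uniform boundedness of partial sums.} The first move is to establish that there is a constant $K=K(a,b)$ such that $|C_N(x)|\leq K$ and $|S_N(x)|\leq K$ for all $N\geq 1$ and all $x\in[a,b]$. I would do this by summing the geometric series $\sum_{n=1}^N e^{inx} = e^{ix}\frac{1-e^{iNx}}{1-e^{ix}}$ and taking real and imaginary parts, which yields the closed-form estimates $|C_N(x)|, |S_N(x)| \leq \frac{1}{|\sin(x/2)|}$. Since $[a,b]$ avoids the zeros of $\sin(x/2)$, the function $1/|\sin(x/2)|$ is continuous on the compact set $[a,b]$ and hence bounded by some $K<\infty$.

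\textbf{Step 2: Abel summation.} Fix $\varepsilon>0$. For $M<N$ and $x\in[a,b]$, Abel's partial summation (applied to the sine series; the cosine case is identical) gives
\[
\sum_{n=M+1}^{N} b_n \sin(nx) = b_N S_N(x) - b_{M+1}S_M(x) + \sum_{n=M+1}^{N-1}(b_n - b_{n+1})S_n(x).
\]
Using the uniform bound from Step 1 and the fact that $b_n-b_{n+1}\geq 0$ (monotonicity), the right-hand side is dominated in absolute value by
\[
K\Bigl(b_N + b_{M+1} + \sum_{n=M+1}^{N-1}(b_n - b_{n+1})\Bigr) = K(b_N + b_{M+1} + b_{M+1} - b_N) = 2Kb_{M+1}.
\]
Since $b_n\to 0$, one chooses $M$ large enough that $b_{M+1}<\varepsilon/(2K)$, and the tail estimate is then uniform in $x\in[a,b]$ and in $N>M$. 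This is the uniform Cauchy criterion, which yields uniform convergence of $\sum b_n \sin(nx)$ on $[a,b]$. The same argument with $C_n$ in place of $S_n$ handles $\sum a_n \cos(nx)$ (the constant term $a_0/2$ is irrelevant for convergence).

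\textbf{Main obstacle.} There is really no obstacle beyond getting Step 1 clean: the heart of the matter is that everything hinges on the uniform lower bound for $|\sin(x/2)|$ away from $2k\pi$, which is exactly why the conclusion must exclude those points. Once that uniform bound is in hand, Abel summation together with the monotone decay of the coefficients mechanically produces uniform Cauchy tails, so no further subtlety is required.
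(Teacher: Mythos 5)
Your proof is correct: the uniform bound $|C_N(x)|,|S_N(x)|\leq 1/|\sin(x/2)|$ from the geometric sum, combined with Abel summation and the telescoping of the monotone differences, is exactly the standard Dirichlet-test argument, and every step checks out. Note that the paper does not prove this statement at all; it simply cites it as a known result from Tolstov's \emph{Fourier Series}, whose proof is essentially the one you gave.
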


Now we will construct $H$.

\begin{prop}\label{Ex}
There exists a function $H\in VMO_\mathbb{R}$ such that $H\notin L_\mathbb{R}^\infty$.
\end{prop}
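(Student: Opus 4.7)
The plan is to exhibit $H$ as the Hilbert transform of a carefully chosen continuous real function, exploiting that $\widetilde{C_\mathbb{R}(\mathbb{T})} \subseteq VMO_\mathbb{R}$ by \eqref{vmoReal}: since the Hilbert transform of a bounded function need not be bounded, this is a natural source of $VMO_\mathbb{R}$-functions outside $L^\infty$. Concretely, I would set
\[ f(\theta) = \sum_{n=2}^\infty \frac{\sin(n\theta)}{n\log n}. \]
The coefficients $\{1/(n\log n)\}_{n \ge 2}$ are positive and decrease to $0$, with $n \cdot \tfrac{1}{n\log n} = \tfrac{1}{\log n} \to 0$, so Theorem \ref{trigThm} gives uniform convergence on $\mathbb{T}$, hence $f \in C_\mathbb{R}(\mathbb{T})$. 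I then take $H := \widetilde{f}$, which belongs to $VMO_\mathbb{R}$ by \eqref{vmoReal}.

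Next, I would identify $H$ with an explicit cosine series. A direct calculation from \eqref{hilDef} yields $\widetilde{\sin(n\theta)} = -\cos(n\theta)$. Since the Hilbert transform is bounded on $L^2$ (as shown in the proof of Lemma \ref{DblHil}) and the partial sums of $f$ converge to $f$ uniformly and hence in $L^2$, applying the transform term by term gives
\[ H = -\sum_{n=2}^\infty \frac{\cos(n\theta)}{n\log n} \quad \text{in } L^2(\mathbb{T}). \]
Theorem \ref{fourSeri} shows that this same cosine series converges uniformly on every closed subarc $[a, 2\pi - a]$, $0 < a < \pi$, so it defines a continuous function $g$ on $(0, 2\pi)$. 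Matching the $L^2$ and uniform limits on compact subsets of $(0, 2\pi)$ gives $H = -g$ almost everywhere on $\mathbb{T}$.

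The last and most delicate step is to show $g(\theta) \to +\infty$ as $\theta \to 0^+$, since this will force $H$ to be essentially unbounded below and thus $H \notin L^\infty_\mathbb{R}$. For $\theta_N = 1/N$ I would split the series at index $N$. On the head $2 \le n \le N/2$ one has $\cos(n\theta_N) \ge \cos(1/2) > 0$, so that piece is at least a fixed positive multiple of $\sum_{n=2}^{\lfloor N/2\rfloor} \tfrac{1}{n\log n}$, which diverges like $\log\log N$ by the integral test. The tail $\sum_{n > N}$ is handled by Dirichlet's test: since $\bigl|\sum_{n \le M} \cos(n\theta_N)\bigr| \le 1/|\sin(\theta_N/2)| = O(N)$, summation by parts bounds the tail by $O(1/\log N)$. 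Hence $g(1/N) \to \infty$. I expect this last estimate to be the main obstacle; everything preceding it is a routine application of the quoted theorems, whereas pinning down the pointwise blow-up of the oscillatory cosine series requires a careful balance between a divergent head on the scale $n \lesssim N$ and an oscillatory tail controlled via summation by parts at scale $\theta_N \sim 1/N$.
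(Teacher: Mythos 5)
Your proposal uses the exact same function $H = \widetilde{f}$ with $f(\theta)=\sum_{n\ge 2}\sin(n\theta)/(n\log n)$, and the preliminary steps coincide with the paper's: Theorem~\ref{trigThm} to get $f\in C_\mathbb{R}(\mathbb{T})$, hence $H\in VMO_\mathbb{R}$ by~\eqref{vmoReal}; boundedness of the Hilbert transform on $L^2$ to identify $H$ with $-\sum\cos(n\theta)/(n\log n)$ in $L^2$; and Theorem~\ref{fourSeri} to get uniform convergence of the cosine series away from $0 \bmod 2\pi$. Where you genuinely diverge is the final step, showing $H\notin L^\infty_\mathbb{R}$. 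The paper argues by contradiction via Fej\'er means: if $H$ were bounded, its Ces\`aro means $\sigma_N$ would be uniformly bounded, and a computation shows $\widetilde{g_N}-\sigma_N = \frac{1}{N+1}\sum_{j=2}^N\cos(jx)/\ln j$ is uniformly bounded, so the partial sums $\widetilde{g_N}$ would be uniformly bounded — but $\widetilde{g_N}(0)=-\sum_{k=2}^N 1/(k\ln k)\to -\infty$, a contradiction. You instead prove directly that the cosine series blows up pointwise as $\theta\to 0^+$: at $\theta_N=1/N$, the terms with $2\le n\le N$ all have $\cos(n\theta_N)\ge\cos(1)>0$, so the head contributes at least a constant times $\sum_{n\le N/2}1/(n\log n)\sim\log\log N$, while the tail $\sum_{n>N}$ is $O(1/\log N)$ by Abel summation using $\bigl|\sum_{n\le M}\cos(n\theta_N)\bigr|=O(1/|\sin(\theta_N/2)|)=O(N)$. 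Both arguments are correct. Yours is more elementary and self-contained — no Fej\'er-kernel facts required — and it yields a stronger conclusion (genuine pointwise blow-up of $H$ near $0$, not merely essential unboundedness); the cost is the somewhat delicate balance between the divergent head and the oscillatory tail, which the paper's soft averaging argument sidesteps. One small point of presentation: you describe the split as ``at index $N$'' but then bound the head only over $n\le N/2$; this is harmless since $\cos(n/N)>\cos(1)>0$ for the whole range $n\le N$, but you should say so explicitly so the middle range $N/2<n\le N$ is visibly accounted for.
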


\begin{proof}

For each $N\geq 2$ let $g_N:\mathbb{R}\rightarrow\mathbb{R}$ be defined by $g_N(x)=\sum_{k=2}^N \frac{\sin(kx)}{k\ln(k)}$.  By Theorem \ref{trigThm}, $\{g_N\}_{N=2}^\infty$ converges uniformly on $\mathbb{R}$.  Let $g:\mathbb{R}\rightarrow \mathbb{R}$ be defined by $g(x) =\sum_{k=2}^\infty \frac{\sin(kx)}{k\ln(k)}$.  Then $g\in C_\mathbb{R}(\mathbb{T})$ and by \eqref{vmoReal}, $\widetilde{g}\in VMO_\mathbb{R}$.

Direct calculation shows $\widetilde{g_N}(x)=-\sum_{k=2}^N \frac{\cos(kx)}{k\ln(k)}$ for all $x\in\mathbb{R}$.  Recall that by the proof of Lemma \ref{DblHil}, the Hilbert transform is bounded on $L^2(\mathbb{T})$ with respect to the $L^2(\mathbb{T})$ norm $\|\cdot\|_2$.  So by the preceding paragraph, $\lim_{N\rightarrow\infty}\|\widetilde{g_N}-\widetilde{g}\|_2 =0$.  Also by Theorem \ref{fourSeri}, $-\sum_{k=2}^\infty \frac{\cos(kx)}{k\ln(k)}$ converges uniformly on each closed subinterval of $\mathbb{R}\setminus \{2k\pi\}_{k\in\mathbb{Z}}$.  Hence $\widetilde{g}(x)=-\sum_{k=2}^\infty \frac{\cos(kx)}{k\ln(k)}$ for almost every $x\in\mathbb{R}$.  Note that since $\sum_{k=2}^\infty \frac{1}{k\ln(k)}=\infty$, $\{\widetilde{g_N}(0)\}_{N=2}^\infty$ is not bounded.  This means $\{\widetilde{g_N}\}_{n=2}^\infty$ is not uniformly bounded in $N$ and $x$.

Let $H:\mathbb{R}\rightarrow\mathbb{R}$ be defined by \[H(x) = \left\{
        \begin{array}{ll}
            -\sum_{k=2}^\infty \frac{\cos(kx)}{k\ln(k)}& \mbox{ if }\ \widetilde{g}(x)=-\sum_{k=2}^\infty \frac{\cos(kx)}{k\ln(k)}\\
            0 & \mbox{otherwise}
        \end{array}
    \right.\]

Then $H\in VMO_\mathbb{R}$.  Suppose $H\in L_\mathbb{R}^\infty$.  Let $\sigma_N:\mathbb{T}\rightarrow \mathbb{C}$ be defined by $\sigma_N(x)=\frac{1}{2(N+1)\pi}\int_0^{2\pi} H(x-t)\left(\frac{\sin\left(\frac{(N+1)t}{2}\right)}{\sin\left(\frac{t}{2}\right)}\right)^2dt$.  Then $\left|\sigma_N(x)\right|\leq \|H\|_\infty$ for every $N\geq 2$ and $x\in\mathbb{T}$ \cite[page 63]{krantz}.  Moreover by \cite[page 60]{krantz} $\sigma_N(x)=\frac{1}{n+1}\sum_{j=2}^n \widetilde{g_N}(x)$. Furthermore a straightforward induction argument shows $\widetilde{g_N}(x)-\sigma_N(x)=\frac{1}{N+1}\sum_{j=2}^N \frac{\cos(jx)}{\ln(j)}$.  Hence for some $M_1>0$, $|\widetilde{g_N}(x)-\sigma_N(x)|\leq M_1$ for all $N\geq 2$ and all $x\in\mathbb{T}$.  It follows that $\{\widetilde{g_N}(x)\}_{N=2}^\infty$ is uniformly bounded in $x$ and $N$, which is a contradiction by the above statements.  So $H\notin L_\mathbb{R}^\infty$.  Therefore $H\in VMO_\mathbb{R}\setminus L_\mathbb{R}^\infty$.

\end{proof}

\begin{thm}\label{uncountI}
$\mathcal{I}$ has uncountably many path-connected components.
\end{thm}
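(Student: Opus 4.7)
The plan is to produce an uncountable family of pairwise distinct components of $\mathcal{I}$ by using scalar multiples of the function $H$ built in Proposition \ref{Ex}. By Theorem \ref{mainI}, the components of $\mathcal{I}$ are indexed by pairs $(k,g)$ where $k\in\mathbb{Z}$ and either $g=0$ or $g\in VMO_{\mathbb{R}}\setminus L_{\mathbb{R}}^\infty$. Fixing $k=0$, it suffices to exhibit uncountably many distinct classes of the form $Q_g$.

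First I would unpack when $Q_h=Q_w$. From the proof of Theorem \ref{connIo} (or directly from the definitions \eqref{pPath} and \eqref{uniPath}), one obtains $Q_h=Q_w$ if and only if $h-w\in QC_{\mathbb{R}}$: if $\exp(u)\exp(iv)\in Q_h\cap Q_w$ with appropriate $u\in QC_{\mathbb{R}}$ and $v\in VMO_{\mathbb{R}}$, the argument in the pairwise-disjointness step of Theorem \ref{connIo} produces $h-w\in QC_{\mathbb{R}}$, and conversely $h-w\in QC_{\mathbb{R}}$ gives $P_h=P_w$ and hence $Q_h=Q_w$.

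Next I would take $H$ as in Proposition \ref{Ex} and consider the family $\{rH:r>0\}$. Since $VMO_{\mathbb{R}}$ is a real vector space, $rH\in VMO_{\mathbb{R}}$ for every $r$; and since $H\notin L_{\mathbb{R}}^\infty$ and $r\neq 0$, we have $rH\notin L_{\mathbb{R}}^\infty$, so $rH\in VMO_{\mathbb{R}}\setminus L_{\mathbb{R}}^\infty$. For distinct $r,r'>0$, the difference $(r-r')H$ is a nonzero scalar multiple of $H$, hence not in $L_{\mathbb{R}}^\infty$; in particular, by \eqref{quasi} $QC_{\mathbb{R}}\subseteq L_{\mathbb{R}}^\infty$, so $(r-r')H\notin QC_{\mathbb{R}}$, which by the equivalence above gives $Q_{rH}\neq Q_{r'H}$.

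Thus $\{Q_{rH}:r>0\}$ is an uncountable collection of pairwise distinct path-connected components of $\mathcal{I}$, and the theorem follows. The only step that required real work was Proposition \ref{Ex}, which is already in hand; the remaining obstacle is simply to correctly reconcile the indexing from Theorem \ref{mainI} with the equivalence $Q_h=Q_w\iff h-w\in QC_{\mathbb{R}}$, but this is immediate from the proof of Theorem \ref{connIo} together with the inclusion $QC_{\mathbb{R}}\subseteq L_{\mathbb{R}}^\infty$.
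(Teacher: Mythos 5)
Your proposal matches the paper's proof essentially verbatim: both use the characterization $Q_g=Q_q\iff g-q\in QC_{\mathbb{R}}$ and the fact that $QC_{\mathbb{R}}\subseteq L_{\mathbb{R}}^\infty$ (from \eqref{quasi}) to conclude that $\{Q_{\beta H}\}_{\beta\in\mathbb{R}}$ (or your $\{Q_{rH}\}_{r>0}$) is an uncountable family of pairwise distinct components, with $H$ from Proposition \ref{Ex}. The only cosmetic difference is that you index by $r>0$ while the paper indexes by $\beta\in\mathbb{R}$; the argument is the same.
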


\begin{proof}
Let $H$ be as in Proposition \ref{Ex}.  Then by \eqref{quasi}, $H\notin QC_{\mathbb{R}}$.  By \eqref{pPath}, \eqref{uniPath} and Corollary \ref{Z}, \[Q_g=Q_q \text{ if and only if } g-q\in QC_{\mathbb{R}}\] for any $g,q\in VMO_{\mathbb{R}}$.  Hence for all $\beta, \sigma\in\mathbb{R}$, \[\beta \neq\sigma \text{ implies }Q_{\beta H}\neq Q_{\sigma H}.\]  Thus $\{Q_{\beta H}\}_{\beta\in\mathbb{R}}$ is an uncountable collection of path-connected components in $I_0$ and $I_0$ has uncountably many path-connected components.  Therefore by Theorem \ref{mainI}, $\mathcal{I}$ has uncountably many path-connected components.
\end{proof}

\begin{rem}
In fact by Theorem \ref{connIk} and Theorem \ref{uncountI}, $I_k$ has uncountable many path-connected components for all $k\in\mathbb{Z}$.

\end{rem}

\section{Classification of $\mathcal{F}$ and the Path-connected components of $\mathcal{F}$} \label{fred}
Here we will classify $\mathcal{F}$ and the path-connected components of $\mathcal{F}$.  As we will see, the classification of the path-connected components of $\mathcal{F}$ is based on the classification of the path-connected components of $\mathcal{I}$.  So one can really say that Theorem \ref{mainF} is a corollary to Theorem \ref{mainI}.  We will also show that $\mathcal{F}$ has uncountably many path-connected components.

For this section, we will use the following lemma which holds by proposition 7.12 \cite[page 161]{Doug} and the proof of proposition 7.11 \cite[page 161]{Doug}.

\begin{lem}\label{compactToe}
Let $f\in L^\infty(\mathbb{T})$ and $K\in\mathfrak{LC}(H^2)$.  Then $\|T_f+K\|\geq\|T_f\|$.
\end{lem}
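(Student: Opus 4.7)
The plan is to reduce the statement to the fact that the essential norm of a Toeplitz operator equals its operator norm. Since $\|T_f\| = \|f\|_\infty$ is the content of Proposition 7.11 of Douglas, it suffices to show that for every compact $K$ on $H^2$, $\|T_f + K\| \geq \|f\|_\infty$. The strategy will be to produce, for each $\varepsilon > 0$, a sequence of unit vectors $\{\psi_n\} \subset H^2$ with $\psi_n \rightharpoonup 0$ weakly and $\liminf_n \|T_f \psi_n\|_2 \geq \|f\|_\infty - \varepsilon$. Weak convergence forces $\|K\psi_n\|_2 \to 0$ for any compact $K$, so that
\[ \|T_f + K\| \geq \limsup_n \|(T_f + K)\psi_n\|_2 \geq \|f\|_\infty - \varepsilon, \]
and letting $\varepsilon \to 0$ finishes the proof.

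To construct the test sequence, I would first fix $\varepsilon > 0$ and pick a measurable $E \subseteq \mathbb{T}$ with $|E| > 0$ and $|f| \geq \|f\|_\infty - \varepsilon$ almost everywhere on $E$. Setting $\phi = |E|^{-1/2}\mathbf{1}_E$ yields a unit vector in $L^2(\mathbb{T})$ with $\|f\phi\|_2 \geq \|f\|_\infty - \varepsilon$. Since $\phi$ is typically not in $H^2$, I would pass to $\psi_n = P(\chi_n \phi)/\|P(\chi_n \phi)\|_2$ and observe, via a direct Fourier-coefficient computation, that $\|P(\chi_n \phi)\|_2 \to \|\phi\|_2 = 1$. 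The reason is that multiplication by $\chi_n$ shifts the Fourier support of $\phi$ upward by $n$, so $\|(I - P)(\chi_n \phi)\|_2^2 = \sum_{k \leq -n - 1}|a_\phi(k)|^2 \to 0$.

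For the Toeplitz action, I would write $T_f \psi_n = P(f \psi_n)$, use the bound $\|f\psi_n - f\chi_n \phi\|_2 \leq \|f\|_\infty \|\psi_n - \chi_n \phi\|_2 \to 0$, and then apply the same Fourier-shift calculation to $f\phi \in L^2(\mathbb{T})$ to conclude $\|P(\chi_n f\phi)\|_2 \to \|f\phi\|_2$. Combining gives $\|T_f \psi_n\|_2 \to \|f\phi\|_2 \geq \|f\|_\infty - \varepsilon$. Weak convergence $\psi_n \rightharpoonup 0$ in $H^2$ is immediate from the same setup: the $k$-th Fourier coefficient of $\psi_n$ is, up to a normalization tending to $1$, $a_\phi(k - n)$, which tends to $0$ as $n \to \infty$ for each fixed $k \geq 0$ because $\phi \in L^2$.

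The only real obstacle is bookkeeping: two Fourier-shift limits have to cooperate so that both $\|T_f \psi_n\|_2 \to \|f\phi\|_2$ and $\psi_n \rightharpoonup 0$ hold for the same sequence. Both follow from the elementary observation that the $L^2$-mass of a fixed function outside any large band of Fourier coefficients vanishes in the limit, so once this is set up cleanly there is no additional difficulty, and the lemma reduces to a standard essential-norm argument carried out in the proof of Proposition 7.11 of Douglas.
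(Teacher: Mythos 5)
Your argument is correct. The paper does not actually prove this lemma; it simply cites Douglas (Propositions 7.11 and 7.12), whose proof runs via conjugation by the shifts: one writes $T_{\chi_{-n}}(T_f+K)T_{\chi_n}=T_f+T_{\chi_{-n}}KT_{\chi_n}$, notes that $T_{\chi_{-n}}T_fT_{\chi_n}=T_f$ by Proposition \ref{commToe}, and uses that $\|T_{\chi_{-n}}KT_{\chi_n}\|\to 0$ for compact $K$ (since $T_{\chi_{-n}}\to 0$ strongly) to conclude $\|T_f+K\|\geq\|T_f+T_{\chi_{-n}}KT_{\chi_n}\|\to\|T_f\|$. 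Your proof replaces this conjugation by an explicit weakly-null test sequence $\psi_n = P(\chi_n\phi)/\|P(\chi_n\phi)\|_2$ with $\phi=|E|^{-1/2}\mathbf{1}_E$, and instead uses compactness in the form that compact operators take weakly convergent sequences to norm-convergent ones. Both arguments hinge on the same $\chi_n$-shift and on the tail estimate $\sum_{k\leq -n-1}|a_\phi(k)|^2\to 0$; yours is somewhat more elementary in that it avoids the operator-theoretic fact that $\|A_nK\|\to 0$ when $K$ is compact and $A_n\to 0$ strongly, and it makes the equality of norm and essential norm for $T_f$ concrete. All the intermediate limits ($\|P(\chi_n\phi)\|_2\to 1$, $\|\psi_n-\chi_n\phi\|_2\to 0$, $\|T_f\psi_n\|_2\to\|f\phi\|_2$, and $\psi_n\rightharpoonup 0$) are correctly justified, and the final $\varepsilon\to 0$ step, together with $\|T_f\|=\|f\|_\infty$, yields the lemma. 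The only point worth making explicit is that $\|P(\chi_n\phi)\|_2>0$ for all large $n$, which is automatic since this quantity tends to $1$, so $\psi_n$ is well defined for $n$ large enough, and that suffices.
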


The above implies that the only compact Toeplitz operator on $H^2$ is the zero operator.  That is, $T_f\in\mathfrak{LC}(H^2)$ if and only if $f=0$.

\section{Classification of $\mathcal{F}$}

Recall from above that $\mathfrak{I}(QC)$ is the $C^*$-subalgebra of $\mathfrak{L}(H^2)$ generated by $\{T_f\}_{f\in QC}$.  The next lemma is well-known in the literature.

\begin{lem}\label{ToeQC}
$\mathfrak{I}(QC)=\{T_f+K:f\in QC, K\in\mathfrak{LC}(H^2)\}$
\end{lem}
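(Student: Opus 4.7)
The plan is to let $S := \{T_f+K : f\in QC,\, K\in\mathfrak{LC}(H^2)\}$ and prove the two inclusions separately: one by exhibiting $S$ inside $\mathfrak{I}(QC)$ directly, the other by showing that $S$ is already a closed $*$-subalgebra of $\mathfrak{L}(H^2)$ containing $\{T_f:f\in QC\}$, which by minimality of $\mathfrak{I}(QC)$ forces $\mathfrak{I}(QC)\subseteq S$.

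For $S\subseteq \mathfrak{I}(QC)$, every $T_f$ with $f\in QC$ is in $\mathfrak{I}(QC)$ by definition, so it suffices to prove $\mathfrak{LC}(H^2)\subseteq \mathfrak{I}(QC)$. Since $C(\mathbb{T})\subseteq QC$ we have $\mathfrak{I}(C(\mathbb{T}))\subseteq \mathfrak{I}(QC)$, and the classical fact that $\mathfrak{I}(C(\mathbb{T}))$ contains all compact operators (which can be seen from $[T_{\chi_1}^*, T_{\chi_1}]$ being a nonzero rank-one projection in this algebra, combined with the irreducibility of $\{T_f:f\in C(\mathbb{T})\}$ on $H^2$) closes the argument.

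For the opposite inclusion I would verify the four properties that make $S$ a closed $*$-subalgebra. Closure under sum and scalar multiplication is immediate. For the $*$-operation, since $QC$ is self-adjoint and $T_f^* = T_{\bar f}$, we have $(T_f+K)^* = T_{\bar f}+K^* \in S$. Closure under multiplication reduces to showing that $T_fT_g - T_{fg}$ is compact for $f,g\in QC$: using $QC\subseteq H^\infty+C(\mathbb{T})$, write $f = h+c$ with $h\in H^\infty$, $c\in C(\mathbb{T})$; then Proposition \ref{commToe} gives $T_hT_g = T_{hg}$ exactly, and Proposition \ref{ToeCommCompact} gives $T_cT_g - T_{cg}\in \mathfrak{LC}(H^2)$, so $T_fT_g - T_{fg} = T_cT_g - T_{cg}$ is compact. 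Expanding
\[(T_f+K_1)(T_g+K_2) = T_{fg} + (T_fT_g - T_{fg}) + T_fK_2 + K_1T_g + K_1K_2\]
and noting that $fg\in QC$ while the last four terms are compact, we see the product lies in $S$.

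The subtlest step, and where I expect the main obstacle, is showing $S$ is norm-closed; this is where Lemma \ref{compactToe} is essential. Given a Cauchy sequence $T_{f_n}+K_n \to A$ in operator norm, applying Lemma \ref{compactToe} to differences yields
\[\|f_n - f_m\|_\infty = \|T_{f_n-f_m}\| \le \|(T_{f_n}+K_n) - (T_{f_m}+K_m)\|,\]
so $\{f_n\}$ is Cauchy in $L^\infty$. Since $QC$ is closed in $L^\infty$, $f_n\to f$ for some $f\in QC$, and consequently $T_{f_n}\to T_f$ in operator norm. Therefore $K_n = (T_{f_n}+K_n) - T_{f_n} \to A - T_f$, and by norm-closedness of $\mathfrak{LC}(H^2)$ the limit $A - T_f$ is compact, giving $A\in S$. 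Combining the two inclusions completes the proof.
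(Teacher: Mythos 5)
Your proof is correct and takes essentially the same route as the paper's: show that $S=\{T_f+K : f\in QC,\, K\in\mathfrak{LC}(H^2)\}$ is a norm-closed $*$-subalgebra containing the generators (with Lemma~\ref{compactToe} doing the work for norm-closedness), and show $\mathfrak{LC}(H^2)\subseteq\mathfrak{I}(QC)$ to get the reverse inclusion. The only difference is that where you sketch the classical commutator-plus-irreducibility argument that $\mathfrak{I}(C(\mathbb{T}))$ contains all compacts, the paper simply cites Coburn's Theorem~1, which packages exactly that fact; your decomposition $f=h+c$ for closure under multiplication is also a slightly leaner version of the two-sided decomposition used in the paper's Lemma~\ref{indSum}, but it rests on the same two propositions.
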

We will also need the following lemma, which is Corollary 7.34 \cite[page 168]{Doug}.  \begin{lem}\label{fredI}
Let $f\in H^\infty+C(\mathbb{T})$.  Then $T_f\in\mathfrak{F}(H^2)$ if and only if $f\in (H^\infty+C(\mathbb{T}))^{-1}$.
\end{lem}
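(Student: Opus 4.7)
The plan is to invoke Corollary 7.34 of \cite{Doug} directly, since the statement is a classical result of Douglas; nevertheless, a self-contained proof can be sketched along standard lines using machinery already in the paper. I would split into the two implications and handle them separately.

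For the ($\Leftarrow$) direction, assume $f^{-1}\in H^\infty+C(\mathbb{T})$. Writing $f=h_f+p_f$ and $f^{-1}=h_g+p_g$ with $h_f,h_g\in H^\infty$ and $p_f,p_g\in C(\mathbb{T})$, the same manipulation used in the proof of Lemma \ref{indSum} produces compact operators $K_1,K_2\in\mathfrak{LC}(H^2)$ with $T_fT_{f^{-1}}=T_{f\cdot f^{-1}}+K_1=I+K_1$ and $T_{f^{-1}}T_f=I+K_2$. Hence $T_f$ has a two-sided Calkin-algebra inverse, so Atkinson's theorem gives $T_f\in\mathfrak{F}(H^2)$.

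For the ($\Rightarrow$) direction, which is the real content of the lemma, the cleanest route is to establish that the Toeplitz map descends to an isometric $*$-isomorphism
\[\Phi:H^\infty+C(\mathbb{T})\longrightarrow \mathcal{I}(H^\infty+C(\mathbb{T}))/\mathfrak{LC}(H^2),\qquad \Phi(f)=T_f+\mathfrak{LC}(H^2).\]
Once this identification is available, Atkinson's theorem says that $T_f\in\mathfrak{F}(H^2)$ iff $T_f+\mathfrak{LC}(H^2)$ is invertible in the Calkin algebra, and since $*$-isomorphisms preserve invertibility, this transfers to $f\in (H^\infty+C(\mathbb{T}))^{-1}$, completing the forward direction.

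The main obstacle is proving that $\Phi$ is a well-defined isometric $*$-isomorphism. The building blocks are: (i) Lemma \ref{compactToe}, which yields $\|T_f+K\|\geq\|T_f\|=\|f\|_\infty$ and hence shows $\Phi$ is isometric and in particular injective; (ii) Propositions \ref{commToe} and \ref{ToeCommCompact}, which imply that $\Phi$ is multiplicative, since for $f,g\in H^\infty+C(\mathbb{T})$ the decomposition used above gives $T_fT_g-T_{fg}\in\mathfrak{LC}(H^2)$; and (iii) an argument analogous to Lemma \ref{ToeQC} showing every element of $\mathcal{I}(H^\infty+C(\mathbb{T}))$ has the form $T_f+K$ with $f\in H^\infty+C(\mathbb{T})$ and $K\in\mathfrak{LC}(H^2)$, which combined with (i) gives surjectivity of $\Phi$. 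Step (iii) is the delicate point, as it requires the full structure theorem for the Toeplitz algebra with symbols in the larger class $H^\infty+C(\mathbb{T})$; given this, all other pieces follow from machinery already developed in the preceding sections.
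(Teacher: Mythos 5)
The paper offers no proof of this lemma at all: it is stated verbatim as Douglas's Corollary 7.34 and cited directly. Your opening sentence does the same thing, so on that level you match the paper exactly; citing Douglas is perfectly adequate here.

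However, the self-contained sketch you then offer has a genuine gap, and it is worth flagging because it misses the actual difficulty of the result. For the forward direction you propose to show that $\Phi(f)=T_f+\mathfrak{LC}(H^2)$ is an isometric isomorphism from $H^\infty+C(\mathbb{T})$ onto the subalgebra $\mathcal{I}(H^\infty+C(\mathbb{T}))/\mathfrak{LC}(H^2)$ of the Calkin algebra, and then to argue that Fredholmness of $T_f$ (invertibility of $\Phi(f)$ in the full Calkin algebra, by Atkinson) therefore transfers to invertibility of $f$ in $H^\infty+C(\mathbb{T})$. The problem is that even granting (i)--(iii) in full, the isomorphism $\Phi$ only tells you that $f$ is invertible in $H^\infty+C(\mathbb{T})$ if and only if $\Phi(f)$ is invertible \emph{in the subalgebra} $\mathcal{I}(H^\infty+C(\mathbb{T}))/\mathfrak{LC}(H^2)$; Atkinson gives invertibility in the strictly larger algebra $\mathfrak{L}(H^2)/\mathfrak{LC}(H^2)$. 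Passing from invertibility in the larger algebra to invertibility in a closed subalgebra is precisely the assertion that the subalgebra is inverse-closed (spectrally permanent), and that is the entire content of the hard direction --- it cannot be assumed, and it fails for general Banach subalgebras (e.g.\ $\chi_1$ is invertible in $C(\mathbb{T})$ but not in the disk algebra). Calling $\Phi$ a $*$-isomorphism does not rescue this: $H^\infty+C(\mathbb{T})$ is not self-adjoint, so it is not a $*$-algebra, $\mathcal{I}(H^\infty+C(\mathbb{T}))$ is not a $C^*$-algebra, and the automatic spectral permanence enjoyed by $C^*$-subalgebras is unavailable. Douglas's actual proof of the forward direction takes a different route: it shows that if $T_\phi$ is Fredholm then the harmonic extension $\widehat{\phi}$ is bounded away from zero on an annulus $r_0<|z|<1$, and then invokes Sarason's theorem characterizing invertibility in $H^\infty+C(\mathbb{T})$ in precisely those terms (the same Theorem~4 of \cite{sarason} already used in Section~\ref{SectInd} to define $\ind$). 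Your backward direction, by contrast, is correct and essentially the standard argument via Propositions~\ref{commToe} and~\ref{ToeCommCompact}.
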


We can now prove the following Theorem which describes $\mathcal{F}$ completely.

\begin{thm}\label{F}
$\mathcal{F}=\{T_f+K:f\in \mathcal{I}, K\in \mathfrak{LC}(H^2)\}$.
\end{thm}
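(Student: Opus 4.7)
The plan is to prove both inclusions using Lemma \ref{ToeQC} to write every element of $\mathfrak{I}(QC)$ in the standard form $T_f+K$ with $f\in QC$ and $K\in\mathfrak{LC}(H^2)$, and then to connect the Fredholm property of $T_f+K$ with the invertibility of $f$ in $QC$ via Lemma \ref{I} and Lemma \ref{fredI}.

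For the inclusion $\mathcal{F}\subseteq\{T_f+K:f\in\mathcal{I},K\in\mathfrak{LC}(H^2)\}$, I would take an arbitrary $A\in\mathcal{F}$ and use Lemma \ref{ToeQC} to write $A=T_f+K$ with $f\in QC$ and $K$ compact. Since $A$ is Fredholm and compact perturbations preserve the Fredholm property, $T_f=A-K$ is Fredholm. Because $QC\subseteq H^\infty+C(\mathbb{T})$, Lemma \ref{fredI} then gives $f\in(H^\infty+C(\mathbb{T}))^{-1}$. Passing to adjoints, $A^*=T_{\overline{f}}+K^*$ is also Fredholm (since $A$ is), and $K^*$ is compact, so $T_{\overline{f}}$ is Fredholm as well; Lemma \ref{fredI} again yields $\overline{f}\in(H^\infty+C(\mathbb{T}))^{-1}$. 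Now Lemma \ref{I} finishes this direction by giving $f\in\mathcal{I}$.

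For the reverse inclusion, I would take $f\in\mathcal{I}$ and $K\in\mathfrak{LC}(H^2)$, and show $T_f+K\in\mathcal{F}$. Membership in $\mathfrak{I}(QC)$ is immediate from Lemma \ref{ToeQC} since $\mathcal{I}\subseteq QC$. For the Fredholm property, since $f^{-1}\in QC\subseteq H^\infty+C(\mathbb{T})$, the argument already used inside the proof of Lemma \ref{indSum} (expanding $f$ and $f^{-1}$ as sums of $H^\infty$ and $C(\mathbb{T})$ parts and applying Propositions \ref{commToe} and \ref{ToeCommCompact}) shows
\[ T_fT_{f^{-1}}=T_{f\cdot f^{-1}}+L_1=I+L_1,\qquad T_{f^{-1}}T_f=T_{f^{-1}\cdot f}+L_2=I+L_2, \]
with $L_1,L_2\in\mathfrak{LC}(H^2)$. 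Thus $T_f$ has a two-sided inverse modulo the compacts and so is Fredholm, and adding the compact operator $K$ keeps it Fredholm.

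The two inclusions together give the stated equality. I do not expect a serious obstacle: the whole argument is a direct application of Lemma \ref{ToeQC}, Lemma \ref{I}, Lemma \ref{fredI}, and the mod-compact multiplicativity of the Toeplitz symbol map on $H^\infty+C(\mathbb{T})$ already established en route to Lemma \ref{indSum}. The one thing to be careful about is that when taking adjoints, one needs $QC$ to be self-adjoint (so that $\overline{f}\in QC\subseteq H^\infty+C(\mathbb{T})$ and Lemma \ref{fredI} applies to $T_{\overline{f}}$), which is precisely the defining feature of $QC$.
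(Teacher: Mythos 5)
Your proposal is correct and follows essentially the same route as the paper: decompose via Lemma~\ref{ToeQC}, pass to the Toeplitz part since compact perturbations preserve Fredholmness, take adjoints to get $\overline{f}\in(H^\infty+C(\mathbb{T}))^{-1}$, and conclude with Lemma~\ref{I}. One small difference: for the direction $f\in\mathcal{I}\Rightarrow T_f$ Fredholm, the paper simply cites Lemma~\ref{fredI} (since $\mathcal{I}\subseteq(H^\infty+C(\mathbb{T}))^{-1}$), whereas you re-derive Fredholmness via the mod-compact multiplicativity from Lemma~\ref{indSum}'s proof plus Atkinson's characterization. That works, but it is reproving a special case of Lemma~\ref{fredI} that you already have in hand; the paper's citation is the more economical route. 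Conversely, your version of the other inclusion is marginally cleaner than the paper's: by starting from the Lemma~\ref{ToeQC} decomposition $A=T_f+K$ with $f\in QC$, you sidestep the paper's extra step of invoking Lemma~\ref{compactToe} to identify the symbol.
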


\begin{proof}
By Lemma \ref{ToeQC}, $\mathcal{F}=\mathfrak{F}(H^2)\cap\{T_f+K:f\in QC, K\in\mathfrak{LC}(H^2)\}$.  Also by proposition 5.15 \cite[page 113]{Doug}, $T_f+K\in\mathfrak{F}(H^2)$ if and only if $T_f\in\mathfrak{F}(H^2)$ for any $K\in\mathfrak{LC}(H^2)$ and any $f\in L^\infty\left(\mathbb{T}\right)$.  So it suffices to show $T_f\in\mathcal{F}$ if and only if $f\in \mathcal{I}$.

$(\Leftarrow)$ If $f\in \mathcal{I}$ then $f\in (H^\infty+C(\mathbb{T}))^{-1}$.  So by Lemma \ref{fredI}, $T_f\in \mathfrak{F}(H^2)$.  Then by definition of $\mathcal{F}$ and Lemma \ref{ToeQC}, $T_f\in \mathcal{F}$.

$(\Rightarrow)$ Suppose $T_f\in \mathcal{F}$.  Then $T_f\in\mathfrak{F}(H^2)$ and $T_f=T_g+K$ for some $g\in QC$ and $K\in\mathfrak{LC}(H^2)$.  Then $T_{f-g}\in\mathfrak{LC}(H^2)$ which means $f=g$ by Lemma \ref{compactToe}.  So $f\in QC$.  Also, by proposition 5.15 \cite[page 115]{Doug}, $T_{\overline{f}}\in\mathfrak{F}(H^2)$.  Then by Lemma \ref{fredI}, $f, \overline{f}\in (H^\infty+C(\mathbb{T}))^{-1}$.  Therefore by Lemma \ref{I}, $f\in \mathcal{I}$.
\end{proof}

\section{Path-connected components of the elements of $\mathcal{F}$ of index k}
For any $k\in\mathbb{Z}$, let \[E_k=\{T_f+K\in \mathcal{F}: j(T_f+K)=k\}.\]  By Section \ref{SectInd}, \[j(T_f+K)=j(T_f)=-\ind(f) \text{ for all } f\in \mathcal{I} \text{ and for all } K\in\mathfrak{LC}(H^2).\]  Hence \begin{equation}\label{firstDefEk}E_k=\{T_f+K\in \mathcal{F}: f\in I_{-k}, K\in\mathfrak{LC}(H^2)\}.\end{equation}

Let $\zeta:L^\infty\left(\mathbb{T}\right)\rightarrow \mathfrak{L}(H^2)$ be the map defined by $\zeta(f)=T_f$.  Clearly $\zeta$ is well-defined.  Moreover by proposition 7.4 \cite[page 159]{Doug} and Corollary 7.8 \cite[page 160]{Doug}, $\zeta$ is *-linear and an isometry.  Let $\xi=\zeta|_{\mathcal{I}}$.  Notice $\xi$ is continuous on $\mathcal{I}$.  We will now use $\xi$ and Theorem \ref{connIk} to classify the path-connected components of $E_k$.  Recall for each $g\in VMO_{\mathbb{R}}$ and $k\in\mathbb{Z}$, ${_k}V_g$ is defined by \eqref{fredPath}.

\begin{thm}\label{pathEk}
The path-connected components of $E_k$ are \[\{_kV_g\}_{\left\{\begin{subarray}{l} g\in VMO_{\mathbb{R}}\setminus L_\mathbb{R}^\infty \text{ or } \\ g=0\end{subarray}\right\}}.\]
\end{thm}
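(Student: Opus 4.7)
The plan is to verify the four conditions of Lemma \ref{equiv} for the family $\{{_k}V_g\}$ inside $E_k$, using as the principal tool the map $\xi : \mathcal{I} \to \mathfrak{L}(H^2)$, $\xi(f) = T_f$. By Lemma \ref{compactToe} applied with $K = 0$, $\xi$ is an isometry and in particular continuous, and the induced map $f \mapsto \pi(T_f)$ into the Calkin algebra $\mathfrak{L}(H^2)/\mathfrak{LC}(H^2)$ is also isometric, which will let me pull back any path in $E_k$ to a continuous path at the symbol level.

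For the covering condition, \eqref{firstDefEk} combined with Theorem \ref{connIk} gives $I_{-k} = \bigcup \chi_{-k}Q_g$, and unpacking the definitions yields $E_k = \bigcup {_k}V_g$. For disjointness, if $T_{f_1} + K_1 = T_{f_2} + K_2 \in {_k}V_g \cap {_k}V_h$, then $T_{f_1 - f_2} = K_2 - K_1$ is compact, so Lemma \ref{compactToe} forces $f_1 = f_2$; since the common symbol lies in $\chi_{-k}Q_g \cap \chi_{-k}Q_h$, Theorem \ref{connIk} forces $\chi_{-k}Q_g = \chi_{-k}Q_h$ and hence ${_k}V_g = {_k}V_h$. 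For path-connectedness of each ${_k}V_g$, given $T_{f_1} + K_1$ and $T_{f_2} + K_2$ in ${_k}V_g$, take a path $F : [0,1] \to \chi_{-k}Q_g$ joining $f_1$ and $f_2$ (which exists by Theorem \ref{connIk}) and define $G(\lambda) = T_{F(\lambda)} + (1-\lambda)K_1 + \lambda K_2$; continuity of $\xi$ and of the straight-line interpolation in $\mathfrak{LC}(H^2)$ make $G$ a continuous path in ${_k}V_g$ joining the two endpoints.

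The step I expect to be the main obstacle is the maximality condition. Given a path $F : [0,1] \to E_k$ with $F(0) = T_{f_1} + K_1 \in {_k}V_g$ and $F(1) = T_{f_2} + K_2 \in {_k}V_h$, I need to extract from it a path at the symbol level so that Theorem \ref{connIk} can be applied. Writing $F(\lambda) = T_{f_\lambda} + \widetilde{K}_\lambda$ with $f_\lambda \in I_{-k}$ and $\widetilde{K}_\lambda \in \mathfrak{LC}(H^2)$, uniqueness of this decomposition is exactly Lemma \ref{compactToe} once more. To verify that $\lambda \mapsto f_\lambda$ is continuous, I post-compose $F$ with the Calkin projection $\pi$: the composite $\pi \circ F$ is continuous, and $\pi(T_{f_\lambda} + \widetilde{K}_\lambda) = \pi(T_{f_\lambda})$; since $f \mapsto \pi(T_f)$ is isometric on $QC$ (Lemma \ref{compactToe} gives $\|\pi(T_f)\| = \|T_f\| = \|f\|_\infty$), the continuity of $\lambda \mapsto \pi(T_{f_\lambda})$ transfers to continuity of $\lambda \mapsto f_\lambda$. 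This yields a path in $I_{-k}$ from $f_1$ to $f_2$; Theorem \ref{connIk} then produces $\chi_{-k}Q_g = \chi_{-k}Q_h$, so ${_k}V_g = {_k}V_h$, and Lemma \ref{equiv} completes the proof.
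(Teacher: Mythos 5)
Your proposal is correct and follows essentially the same route as the paper's proof: covering and disjointness via Lemma \ref{compactToe} and Theorem \ref{connIk}, path-connectedness of each ${_k}V_g$ via the path in $\chi_{-k}Q_g$ combined with linear interpolation in the convex set $\mathfrak{LC}(H^2)$, and maximality by pulling the operator-level path down to a continuous path of symbols. Your phrasing of the maximality step through the Calkin projection $\pi$ is a cosmetic repackaging of the paper's direct use of the inequality $\|f_\lambda - f_\mu\|_\infty = \|T_{f_\lambda - f_\mu}\| \le \|H(\lambda) - H(\mu)\|$ from Lemma \ref{compactToe} (and arguably states the needed continuity more cleanly), but the underlying argument is identical.
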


\begin{proof}
 As mentioned in the proof of Lemma \ref{ToeQC}, $\mathfrak{LC}(H^2)$ is a two-sided *-ideal in $\mathfrak{L}(H^2)$.  This implies $\mathfrak{LC}(H^2)$ is convex in $\mathfrak{L}(H^2)$.  Also for any $g\in VMO_{\mathbb{R}}$ and any $k\in\mathbb{Z}$, \begin{equation}\label{xi}{_k}V_g=\xi(\chi_{-k}Q_g)+\mathfrak{LC}(H^2).\end{equation}  It then follows from Theorem \ref{connIk} that \[{_k}V_g \text{ is path-connected in } E_k \text{ for all } g\in VMO_{\mathbb{R}}.\]  By Theorem \ref{connIk} and \eqref{fredPath}, we can see that \[E_k = \bigcup_{\left\{\begin{subarray}{l} g\in VMO_{\mathbb{R}}\setminus L_\mathbb{R}^\infty \text{ or } \\ g=0\end{subarray}\right\}} {_k}V_g. \]

Let $T_{f_0}+K_0\in {_k}V_q\cap {_k}V_h$ for some $f_0\in I_{-k}$, $K_0\in\mathfrak{LC}(H^2)$ and ${_k}V_q, {_k}V_h\in\{_kV_g\}_{\left\{\begin{subarray}{l} g\in VMO_{\mathbb{R}}\setminus L_\mathbb{R}^\infty \text{ or } \\ g=0\end{subarray}\right\}}$. Then for some $f_1\in\chi_{-k}Q_h$ and $K_1\in\mathfrak{LC}(H^2)$, $T_{f_0}+K_0=T_{f_1}+K_1$.  Thus by Lemma \ref{compactToe}, $f_0=f_1$ and $f_0\in\chi_{-k}Q_h$.  Similarly, $f_0\in\chi_{-k}Q_q$.  Then by Theorem \ref{connIk}, ${_k}V_q= {_k}V_h$.  Hence $\{_kV_g\}_{\left\{\begin{subarray}{l} g\in VMO_{\mathbb{R}}\setminus L_\mathbb{R}^\infty \text{ or } \\ g=0\end{subarray}\right\}}$ is pairwise disjoint.

Let $W$ be a nonempty path-connected subspace of $E_k$ with ${_k}V_h\cap W\neq\emptyset$ and ${_k}V_q\cap W\neq\emptyset$ for some ${_k}V_h,{_k}V_q\in\{_kV_g\}_{\left\{\begin{subarray}{l} g\in VMO_{\mathbb{R}}\setminus L_\mathbb{R}^\infty \text{ or } \\ g=0\end{subarray}\right\}}$.  Let $f_h=\chi_{-k}\exp(ih)$ and $f_q=\chi_{-k}\exp(iq)$.  Then since $T_{f_h}\in {_k}V_h$ and $T_{f_q}\in {_k}V_q$, $T_{f_h}$ is path-connected to $T_{f_q}$ in $E_k$.  Let $H:[0,1]\rightarrow E_k$ be the path between $T_{f_q}$ and $T_{f_h}$.  For each $\lambda\in[0,1]$, let $T_{f_\lambda}+K_\lambda=H(\lambda)$ where $f_0=f_q$, $f_1=f_h$, $K_\lambda\in \mathfrak{LC}(H^2)$ for each $\lambda\in (0,1)$ and $K_0,K_1$ both equal the zero operator.  Then by Lemma \ref{compactToe}, \[\|f_{\lambda}\|_\infty=\|T_{f_\lambda}\|\leq\|H(\lambda)\| \text{ for all } \lambda\in [0,1].\]  It follows that $f_q$ is path-connected to $f_h$ in $I_{-k}$.  Then from Theorem \ref{connIk}, $\chi_{-k}Q_h=\chi_{-k}Q_q$.  Hence ${_k}V_q= {_k}V_h$ and by Lemma \ref{equiv}, \newline$\{_kV_g\}_{\left\{\begin{subarray}{l} g\in VMO_{\mathbb{R}}\setminus L_\mathbb{R}^\infty \text{ or } \\ g=0\end{subarray}\right\}}$ are the path-connected components of $E_k$.

\end{proof}
\section{Proof of Theorem \ref{mainF}}

\begin{proof}[Proof of Theorem \ref{mainF}]

By Theorem \ref{pathEk}, \[{_k}V_g \text{ is path-connected in } \mathcal{F} \text{ for all } g\in VMO_{\mathbb{R}} \text{ and for all } k\in\mathbb{Z}.\]  By Theorem \ref{F}, \eqref{firstDefEk} and the fact that $\mathcal{I}=\bigcup_{k\in\mathbb{Z}}I_k$, $\mathcal{F}=\bigcup_{k\in\mathbb{Z}} E_k$.  Thus another application of Theorem \ref{pathEk} yields \[\mathcal{F}=\bigcup_{\left\{\begin{subarray}{l} k\in\mathbb{Z} \text{ and either } \\ g\in VMO_{\mathbb{R}}\setminus L_\mathbb{R}^\infty \text{ or }\\ g=0 \end{subarray}\right\}} {_k}V_g .\]

Suppose for some $f\in\mathcal{I}$ and $K\in\mathfrak{LC}(H^2)$, $T_f+K\in {_k}V_y\cap{_m}V_h$ for some ${_k}V_y,{_m}V_h\in\{{_k}V_g\}_{\left\{\begin{subarray}{l} k\in\mathbb{Z} \text{ and either } \\ g\in VMO_{\mathbb{R}}\setminus L_\mathbb{R}^\infty \text{ or } \\ g=0\end{subarray}\right\}}$.  Then for some $q\in \chi_{-k}Q_y$ and $L\in\mathfrak{LC}(H^2)$ we have $T_f+K=T_q+L$.  Then by Lemma \ref{compactToe} $f=q$ and $f\in \chi_{-k}Q_y$.  Similarly $f\in \chi_{-m}Q_h$.  Then by Theorem \ref{mainI}, $\chi_{-k}Q_y=\chi_{-m}Q_h$.  It follows that ${_k}V_y={_m}V_h$ and $\{{_k}V_g\}_{\left\{\begin{subarray}{l} k\in\mathbb{Z} \text{ and either } \\ g\in VMO_{\mathbb{R}}\setminus L_\mathbb{R}^\infty \text{ or } \\ g=0\end{subarray}\right\}}$ is pairwise-disjoint.

Suppose $W$ is a nonempty path-connected subspace of $\mathcal{F}$ such that $W\cap {_k}V_y\neq \emptyset$ and $W\cap {_m}V_h\neq\emptyset$ for some ${_k}V_y,{_m}V_h\in\{{_k}V_g\}_{\left\{\begin{subarray}{l} k\in\mathbb{Z} \text{ and either } \\ g\in VMO_{\mathbb{R}}\setminus L_\mathbb{R}^\infty \text{ or } \\ g=0\end{subarray}\right\}}.$  Let $f_y\in Q_y$ and $f_h\in Q_h$.  Then since $T_{\chi_{-k}f_y}+K\in {_k}V_y$ and $T_{\chi_{-m}f_h}+K\in{_m}V_h$ where $K\in\mathfrak{LC}(H^2)$, $T_{\chi_{-k}f_y}+K$ is path-connected to $T_{\chi_{-m}f_h}+K$ in $\mathcal{F}$.  Then since $j$ is continuous on $\mathcal{F}$ and integer valued, $j(T_{\chi_{-k}f_y}+K)=j(T_{\chi_{-m}f_h}+K)$.  That is $m=k$.  Then by Theorem \ref{pathEk}, ${_k}V_y= {_m}V_h$.  Therefore by Lemma \ref{equiv}, $\{{_k}V_h\}_{\left\{\begin{subarray}{l} k\in\mathbb{Z} \text{ and either } \\ g\in VMO_{\mathbb{R}}\setminus L_\mathbb{R}^\infty \text{ or } \\ g=0\end{subarray}\right\}}$ are the path-connected components of $\mathcal{F}$.

\end{proof}

\section{Uncountably many path-connected components of $\mathcal{F}$}

\begin{cor}\label{uncountF}
$\mathcal{F}$ has uncountably many path-connected components.
\end{cor}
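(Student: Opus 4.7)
The plan is to carry over the construction used in Theorem \ref{uncountI} essentially verbatim, using Theorem \ref{mainF} in place of Theorem \ref{mainI}. Let $H \in VMO_\mathbb{R} \setminus L^\infty_\mathbb{R}$ be the function constructed in Proposition \ref{Ex}. Since $VMO_\mathbb{R}$ is a real vector space, $\beta H \in VMO_\mathbb{R}$ for every $\beta \in \mathbb{R}$, and $\beta H \notin L^\infty_\mathbb{R}$ whenever $\beta \neq 0$. Thus the family $\{\beta H\}_{\beta \in \mathbb{R}}$ consists of valid indices for path-connected components in the parameterization given by Theorem \ref{mainF} (taking for instance $m = 0$).

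Next I would argue that the family $\{{_0}V_{\beta H}\}_{\beta \in \mathbb{R}}$ is pairwise disjoint. By Theorem \ref{mainF}, the components ${_0}V_{\beta H}$ and ${_0}V_{\sigma H}$ coincide if and only if the corresponding indices $(0,\beta H)$ and $(0,\sigma H)$ give the same component, which (tracing back through \eqref{fredPath}, \eqref{uniPath}, \eqref{pPath} and Corollary \ref{Z}, as was already done in Theorem \ref{uncountI}) happens exactly when $\beta H - \sigma H \in QC_\mathbb{R}$. Since $QC_\mathbb{R} \subseteq L^\infty_\mathbb{R}$ by \eqref{quasi} and $H \notin L^\infty_\mathbb{R}$, we have $(\beta - \sigma)H \in QC_\mathbb{R}$ if and only if $\beta = \sigma$. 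Hence distinct values of $\beta$ produce genuinely distinct path-connected components of $\mathcal{F}$.

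Combining these two observations, $\{{_0}V_{\beta H}\}_{\beta \in \mathbb{R}}$ is an uncountable collection of pairwise distinct path-connected components of $\mathcal{F}$, which proves the corollary. There is no real obstacle here: the heavy lifting (classification of components in Theorem \ref{mainF} and existence of $H$ via Proposition \ref{Ex}) has already been done, so this corollary is essentially a two-line consequence paralleling Theorem \ref{uncountI}. One might also note, as in the remark following Theorem \ref{uncountI}, that the same argument shows each $E_k$ has uncountably many path-connected components.
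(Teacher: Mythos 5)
Your proof is correct and follows essentially the same route as the paper: both pick the function $H$ from Proposition \ref{Ex}, observe that ${_k}V_{\beta H}={_k}V_{\sigma H}$ forces $\chi_{-k}Q_{\beta H}=\chi_{-k}Q_{\sigma H}$ (via Lemma \ref{compactToe}) and hence $(\beta-\sigma)H\in QC_\mathbb{R}$, which by $QC_\mathbb{R}\subseteq L^\infty_\mathbb{R}$ and $H\notin L^\infty_\mathbb{R}$ forces $\beta=\sigma$, yielding an uncountable family of distinct components. The only cosmetic difference is that the paper explicitly cites Lemma \ref{compactToe} to pass from equality of operator classes to equality of symbol classes, whereas you fold this into the phrase ``tracing back through'' the definitions; making that appeal explicit would tighten the write-up.
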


\begin{proof}
Let $k\in\mathbb{Z}$ and assume for some $h,g\in VMO_\mathbb{R}$, ${_k}V_{h}={_k}V_{g}$.  Then by Lemma \ref{compactToe}, $\chi_{-k}Q_{h}=\chi_{-k}Q_{g}$.  Let $H$ be as in Proposition \ref{Ex}.  It follows from the proof of Theorem \ref{uncountI} that \[\beta\neq\gamma \text{ implies } {_k}V_{\beta H}\neq{_k}V_{\gamma H}\] for all $\beta,\gamma\in\mathbb{R}$.  Hence $\{{_k}V_g\}_{\left\{\begin{subarray}{l}g\in VMO_{\mathbb{R}}\setminus L_\mathbb{R}^\infty \text{ or } \\ g=0\end{subarray}\right\}}$ is uncountable.  So $E_k$ has uncountably many path-connected components.  Therefore $\mathcal{F}$ has uncountably many path-connected components.
\end{proof}

\begin{rem}
In fact since $k$ in the proof of Corollary \ref{uncountF} was arbitrary, $E_k$ has uncountable many path-connected components for all $k\in\mathbb{Z}$.

\end{rem}

\subsection*{Acknowledgment}
The author would like to thank his dissertation committee for their help in writing this article: Dr. Jingbo Xia, Dr. Lewis Coburn and Dr. Ching Chou.

\end{document}